\def\d{\mathrm{d}}
\newcommand{\D}{\mathcal {D}}
\newcommand{\X}{\mathcal {X}}
\newcommand{\E}{\mathbb{E}}
\newcommand{\B}{\mathcal{B}}
\newcommand{\F}{\mathcal{F}}
\newcommand{\e}{\mathcal{E}}
\newcommand{\R}{\mathbb{R}}
\newcommand{\N}{\mathbb{N}}
\newcommand{\p}{\mathbb{P}}
\newcommand{\id}{\mathds{1}}
\renewcommand{\(}{\left(}
\renewcommand{\)}{\right)}
\renewcommand{\[}{\left[}
\renewcommand{\]}{\right]}
\renewcommand{\ge}{\geqslant}
\renewcommand{\le}{\leqslant}
\renewcommand{\geq}{\geqslant}
\renewcommand{\leq}{\leqslant}
\renewcommand{\epsilon}{\varepsilon}
\newcommand{\essinf}{\mathrm{ess\mbox{-}inf}}
\theoremstyle{plain}
\newtheorem{theorem}{Theorem}
\newtheorem{lemma}{Lemma}
\newtheorem{proposition}{Proposition}
\theoremstyle{definition}
\newtheorem{example}{Example}
\newtheorem{assumption}{Assumption}
\theoremstyle{remark}
\newtheorem{remark}{Remark}
\newcommand{\cet}{\begin{center}}
	\newcommand{\ecet}{\end{center}}
\begin{document}
	
\title{A Framework of State-dependent Utility Optimization with General Benchmarks}

\author{ Zongxia Liang\thanks{\scriptsize Department of Mathematical Sciences, Tsinghua University, China. Email: {\texttt{liangzongxia@mail.tsinghua.edu.cn}}}
	\and Yang Liu\thanks{\scriptsize Division of Mathematics, School of Science and Engineering, The Chinese University of Hong Kong, Shenzhen, Guangdong 518172, China. Email: {\texttt{yangliu16@cuhk.edu.cn} }}
	\and Litian Zhang\thanks{\scriptsize Corresponding author. Department of Mathematical Sciences, Tsinghua University, China. Email: { \texttt{zhanglit19@mails.tsinghua.edu.cn}}}
}

%
%

\date{}

\maketitle

\begin{abstract}	
	Benchmarks in the utility function have various interpretations, including performance guarantees and risk constraints in fund contracts and reference levels in cumulative prospect theory. In most literature, benchmarks are a deterministic constant or a fraction of the underlying wealth variable; thus, the utility is also a function of the wealth. In this paper, we propose a general framework of state-dependent utility optimization with stochastic benchmark variables, which includes stochastic reference levels as typical examples. We provide the optimal solution(s) and investigate the issues of well-definedness, feasibility, finiteness, and attainability. The major difficulties include: (i) various reasons for the non-existence of the Lagrange multiplier and corresponding results on the optimal solution; (ii) measurability issues of the concavification of a state-dependent utility and the selection of the optimal solutions. Finally, we show how to apply the framework to solve some constrained utility optimization problems with state-dependent performance and risk benchmarks as some nontrivial examples.	
	\vskip 5pt	
	{\bf Keywords:} State-dependent utility optimization, General benchmarks,  
	Non-existence of Lagrange multiplier, Measurability, Variational method
	\vskip 5pt	
	{\bf MSC(2020)}:\ Primary: 49J55, \ 91B16; Secondary: 49K45,\ 91G80.
	\vskip 5pt	
	{\bf JEL Classifications}: G11, C61.
\end{abstract}
\section{Introduction}\label{intro}
Fix a probability space $(\Omega, \F, \mathbb{P})$. The classical framework of expected utility maximization in portfolio selection  (cf. \citet{M1969}) is given by
\begin{equation}\label{problem0}
	\begin{split}
		\sup_{X}&~\mathbb{E} [u(X)]\\
		\text{subject}\text{ to}& ~\mathbb{E}[\xi X]\le x_0 \text{ and }u(X)>-\infty~\text{a.s.},
	\end{split}
\end{equation}
where $X: \Omega \to \R$ is a random variable representing the wealth and $u: \D \rightarrow \R$ is a differentiable and strictly concave utility function on the wealth level ($\D \subset \R$ is the domain of $u$). The random variable $\xi: \Omega \to (0, +\infty)$ is a so-called pricing kernel. The number $x_0 \in \R$ represents the budget constraint upper bound. It is clear that the objective $\E[u(X)]$ is invariant under the same distribution of $X$.
\vskip 5pt
This model has been challenged in many aspects: the non-concavity of utility functions and the application of stochastic benchmarks. Practically, the portfolio manager's objective function $u$ is no longer concave because of convex incentives in hedge funds; see \citet{C2000} and \citet{BS2014}. Since the seminal work of cumulative prospect theory (CPT) in \citet{TK1992}, the non-concave S-shaped utility (with a reference point $B \in \R$) has been widely adopted in the above model. Specifically, in an S-shaped utility, an individual displays different risk attitudes on the region smaller or larger than $B$; see \citet{BKP2004}, \citet{KZ2007} and \citet{HK2018}.

\vskip 5pt
The manager usually makes decisions based on the performance of some benchmark variable, e.g., a minimum riskless money market value or a minimum stochastic performance constraint; see \citet{BST2006}. In the literature, researchers also begin to investigate various benchmarks and apply different constraints on the wealth value and the benchmark. The model $\mathbb{E} \left[u\left(X-B\right)\right]$ is widely adopted and $B$ is interpreted as a benchmark of the wealth. In \citet{LL2020}, $B$ is a deterministic reference point in the S-shaped utility. In \citet{BKP2004}, $B$ is a fraction function of $X$, which can be regarded as a deterministic value after some transformations, and the wealth variable $X$ is required to be non-negative. In \citet{BHT2001}, the benchmark $B$ is a random guarantee variable and $X\ge B$ is required, and one can also eliminate the randomness of the benchmark by studying a new variable $X'=X-B\ge0$ and hence the objective function is still a univariate function of the wealth. Other than cases where $B$ can be handled as a deterministic value, it is of importance to study (real) stochastic reference points; see \citet{S2003}. Furthermore, \citet{CBD2006} and \citet{BPS2007} respectively adopt the models $\mathbb{E} \left[u\left(\frac{X}{B}\right)\right]$ and $\E \left[u\left(X f_1\left(X,B\right)\right)\right]$, with various specific benchmark variables and some specified function $f_1: \R \times \R \to \R$;
 see \citet{KR2007}, \citet{BVY2018} and \citet{BSV2019} for more concrete models. In conclusion, the benchmarks in the literature may be stochastic and are usually exogenous to the decision variable.


\vskip 5pt
In this paper, we propose a framework of state-dependent utility optimization with general benchmarks:
\begin{equation}\label{problem1}
	\begin{split}
		\sup_{X}&~\mathbb{E} \left[U\left(X,B\right)\right]\\
		\text{subject}\text{ to}& ~\mathbb{E}\left[\xi X\right]\le x_0 \text{ and } U(X,B)>-\infty~\text{a.s.,}
	\end{split}
\end{equation}
where $E$ is a measurable space and $B: (\Omega,\mathcal{F}) \to (E,\mathcal{E})$ is an $E$-valued random variable representing the benchmark. A multivariate utility function $U: \R \times E \to \R\cup\{-\infty\}$ depends on both the wealth/decision variable $X$ and the benchmark variable $B$.

The framework lies in a rather general setting and the objective is no longer distribution-invariant. {\color{black}The benchmark $B$ is required to be measurable on a space $E$ and measurable of the pricing kernel $\xi$, and it can be a deterministic function, a random variable, or a random vector on $(\Omega, \F, \mathbb{P})$.}
\footnote{{\color{black} The probability space often comes from a complete financial market where $X$ can be replicated. In this paper, we focus on the problem with a mathematical aspect and omit the details of the financial market.}}
Further, the utility function $U$ is only required to be non-decreasing and upper semicontinuous (hence may be discontinuous and non-concave) on $X$ and measurable on $B$. If $B$ is deterministic, Problem \eqref{problem1} reduces to Problem \eqref{problem0} with a univariate utility, and $B$ can be regarded as a reference point in the utility. 
Under some assumptions on $\xi$ and $B$, \cite{BMRV2015} investigate optimal payoffs under a specific state-dependent setting. However, in the literature, there is a lack of comprehensive and rigorous analysis of the general state-dependent utility optimization.
\vskip 5pt
Our contribution is to rigorously provide the optimal solution(s) and  investigate the following issues of this new framework \eqref{problem1}:
\begin{enumerate}[(i)]
	\item Optimality: a wealth variable $X$ is called optimal if $X$ solves Problem \eqref{problem1}.
	\item Feasibility: a wealth variable $X$ is called feasible if $\E \left[U\left(X,B\right)\right] > -\infty$. Problem \eqref{problem1} is called feasible if it admits a feasible solution.
	\item Finiteness: a wealth variable $X$ is called finite\footnote{For simplicity, the concept ``finite'' in (iii) only refers to $+\infty$. Therefore, if Problem \eqref{problem1} or a wealth variable $X$ is infeasible, or there is no wealth variable satisfying all constraints in Problem \eqref{problem1}, we still call it finite; see also Assumption \ref{asmp:well-defined}. These cases are trivial and can be easily recognized in our formulation.} if $\E \left[U\left(X,B\right)\right] < +\infty$. Problem \eqref{problem1} is called finite if the supremum in \eqref{problem1} does not equal $+\infty$.
	\item Attainability: Problem \eqref{problem1} is called attainable if it admits an optimal solution.
	\item Uniqueness: Problem \eqref{problem1} is called unique if, for any two optimal solutions $X$ and $\tilde{X}$, they are equal almost surely.
\end{enumerate}
{\color{black}
We summarize our main results as follows:
\begin{enumerate}
    \item In Theorems \ref{thm_concave}-\ref{general thm}, we establish a stochastic version of the concavification theorem. We introduce $\tilde{U}$ as the concavification of $U(x,b)$ in $x$ and define $\mathcal{X}_b(y)=\mathop{\arg\sup}_{x\in\mathbb{R}\cup\{\pm\infty\}}{\big[U(x,b)-yx\big]}$. We first prove that the concavified problem $\sup\limits_{X: \mathbb{E}[\xi X]\le x_0\atop \tilde{U}(X,B)>-\infty}\mathbb{E}[\tilde{U}(X,B)]$ is also well-defined and has the same value as the original problem, and then we show that $X$ is an optimal solution if and only if $X$ satisfies the budget constraint and locates in a random set $\mathcal{X}_B(\lambda\xi)$ for some $\lambda\ge0$;
    \item In Theorem \ref{main}, we give a measurable selection from the random set $\mathcal{X}_B(\lambda\xi)$ by introducing 
    $\overline{X}_b(y) \triangleq \sup \mathcal{X}_b(y)$ and $\underline{X}_b(y) \triangleq \inf \mathcal{X}_b(y)$. Without assuming that the Lagrange multiplier always exists, we find out the expression of the optimal solution and propose the case where the problem becomes unattainable. Moreover, when the problem is unattainable, we also give the optimal value of the problem and find a sequence of convergent feasible variables $\{X_k\}_{k\ge0}$ whose objective values converge to the optimal value.
\end{enumerate}
}
For the classical framework \eqref{problem0}, the standard approach solving  \eqref{problem0}  is the duality method (cf. \citet{KS1998}). With some assumptions and standard conditions on $u$, for any $x_0$ at the domain of $u$, one can always obtain a unique, finite and non-trivial optimal solution $X^* = (u')^{-1}(\lambda^* \xi)$ for Problem \eqref{problem0}, where $\lambda^* \in (0, +\infty)$ is the Lagrange multiplier solved from $\E[\xi (u')^{-1}(\lambda^* \xi)] = x_0$. For a non-concave utility function $u$, the problem can be solved by the Legendre-Fenchel transformation (cf. \citet{R1970}) and the concavification technique (cf. \citet{C2000}). This technique aims to prove that the optimal solution under a non-concave utility is also the optimal one under its concavification (the minimal dominating concave function of the non-concave utility) and solve the latter problem; it generally requires the assumption of a non-atomic $\xi$ (cf. \citet{BS2014}). The existence of $\lambda^*$ is a key issue. Traditionally, it is always assumed a priori that the function $g(\lambda) \triangleq \E[\xi (u')^{-1}(\lambda \xi)]$ is finite (i.e., $g(\lambda)<+\infty$ for all $\lambda>0$) and the Lagrange multiplier always exists (cf. \citet{KLS1987,KS1999,W2018}). To this point, \citet{JXZ2008} investigate this issue in the classic framework \eqref{problem0} and provide a counterexample that $g(\lambda)$ equals to $+\infty$ for small $\lambda$, and hence the Lagrange multiplier does not exist. In the non-concave univariate setting, \citet{R2013} proves the existence of $\lambda^*$ when the optimal solution exists. Usually, $g$ is continuous and decreasing on its domain. Therefore, the existence of $\lambda^*$ for a proper $x_0$ is guaranteed by the intermediate value theorem. However, the result on optimal solutions when the Lagrange multiplier does not exist is absent.
\vskip 5pt
In this paper, we study an extended version of Problem \eqref{problem0} with weaker assumptions, larger space of utilities, and more detailed conclusions. {\color{black} We only require some of the following classical assumptions:} 
\begin{enumerate}[(I)]
	\item The utility function satisfies the asymptotic elasticity condition (cf. \citet{KS1999}), Inada conditions and other conditions;	
	\item The Lagrange multiplier always exists; see Case 1 in Section \ref{section_finite};
	\item The probability space $(\Omega, \F, \p)$ or the pricing kernel $\xi$ is non-atomic\footnote{A measure $\mu$ is called non-atomic, if for any measurable set $A$ with a positive measure, there exists a measurable subset $B\subset A$ satisfying $\mu[A]>\mu[B]>0$. A random variable $X$ is called non-atomic if its distribution measure is non-atomic. The probability space $(\Omega, \F, \p)$ is called non-atomic if its probability measure $\mathbb{P}$ is non-atomic, which is equivalent to the existence of a continuous distribution; see Proposition A.31 in \citet{FS16}.};
	\item Problem \eqref{problem1} is finite; see Assumption \ref{ass_finite}.
\end{enumerate}
In contrast to (I)-(IV), our discussion includes the following cases that: (I) the utility has a linear tail; (II) the Lagrange multiplier does not exist for some initial value; (III) the pricing kernel is atomic; (IV) Problem \eqref{problem1} is infinite. 
\vskip 5pt
Through investigation on the new framework \eqref{problem1}, we contribute to demonstrate and provide analysis to the following technical cases:
\begin{enumerate}[(i)]
	\item The optimal solution may be non-unique, i.e., there may be a ``random set", denoted by $\X_B(\lambda^* \xi)$ in \eqref{eq:set_rv}. 
 It is because the ``conjugate point" in the Legendre-Fenchel transformation is no longer always unique for our $U$; see Section \ref{section_general}.
	\item Define $\underline{X}_B(\lambda\xi\big) = \inf \X_B(\lambda \xi)$ in \eqref{eq:supinf}. The analogue $g(\lambda)=\E\big[\xi \underline{X}_B\big(\lambda\xi\big)\big]$ defined in \eqref{eq:g} may also equal to $+\infty$ as in \citet{JXZ2008}; see Figure \ref{fig:ex g} (iii)(v)(vi). Moreover, it may even be discontinuous on its domain; see Figure \ref{fig:ex g} (ii)(iv)(vi). Hence, the intermediate value theorem cannot directly guarantee the existence of the Lagrange multiplier; see Section \ref{section_finite}.
	\item In addition to (i), to find $\lambda^*$, we need to select a measurable function (random variable) from the set $\X_B(\lambda^* \xi)$. The measurability issue also arises when applying concavification to state-dependent (or, multivariate) utility functions; see Sections \ref{section_general} and \ref{section_finite}.
	\item In the classical framework \eqref{problem0}, it is also known that $X^*$ is a decreasing function of $\xi$ ; see \citet{C2000} for a detailed economic discussion. In the general framework \eqref{problem1}, the optimal solution $X^*$ may not be a decreasing function of $\xi$ {\color{black} or comonotonic to $\xi$}; see \cite{BMRV2015} for a concrete model. The fact is because the objective function is no longer distribution-invariant on $X$. The technique is in contrast to the results of the quantile formulation approach.
\end{enumerate}
The non-existence of Lagrange multiplier (ii) and the measurability issue (iii) are the biggest difficulties in the discussion.
Technically, together with the situation where $g$ may not be finite, the optimal solution(s) and the above issues are discussed in Theorems \ref{thm_concave}-\ref{main}, summarized in Table \ref{table1} and visualized in Figure \ref{fig:ex g}. In Theorems \ref{thm_concave}-\ref{general thm}, we overcome the measurability difficulties, apply the variational method to obtain the optimal solution(s), and hence give a stochastic version of the concavification theorem. {\color{black} In Theorem \ref{main}, we give an expression of the optimal solution and cover the case where $g$ is discontinuous or infinite. The result also includes the case where Problem \eqref{problem1} is unattainable and finds the optimal value.} The insights of some proofs are illustrated by Figure \ref{fig:ex g}. 
\vskip 5pt
Moreover, the benchmark is also motivated to serve as a risk management constraint if one converts the risk constraint into an unconstrained utility optimization problem by a Lagrange multiplier. The first example is that the so-called liquidation boundary is set as the benchmark process, which the wealth is required to be always above; see \citet{HJ2007}. The second example is the constraints on default probability and Value-at-Risk to mitigate excessive risk taking; see \citet{CH2016} and \citet{NS2020}. These constraint problems can be also transformed to our Problem \eqref{problem1} by emerging the constraints into the utility function as a benchmark variable via Lagrangian duality arguments; see \citet{DZ2020}. {\color{black} We will give some nontrivial examples as applications of our results in Section \ref{section_ex2}.}
\vskip 5pt
The rest of this paper is organized as follows. Section \ref{section_model} establishes the model settings of Problem \eqref{problem1}. The optimal solution(s) are obtained in Section \ref{section_general}. The issues of feasibility, finiteness, attainability, and uniqueness are respectively investigated in Sections \ref{section_finite}-\ref{section_condition}. Section \ref{section_univ} provides a complete result to connect with the univariate framework \eqref{problem0}. Section \ref{section_ex2} presents a concrete application for our framework. Section \ref{section_conclusion} concludes the paper. The proofs are in the Appendix.

\section{Preliminaries}\label{section_model}

In this section, we specify the required settings and assumptions of a multivariate utility function $U:\mathbb{R} \times E\to\mathbb{R}\cup\{-\infty\}$.

\begin{assumption}[{\color{black} Utility and benchmark}]
	\label{ass_standing}
	$U(x, \cdot)$ is measurable on $(E,\mathcal{E})$ for any $x \in \R$. For every $b\in E$, we define $\underline{x}_b\triangleq\inf\{x\in\mathbb{R}: U(x,b)>-\infty\}$
	as the lower bound of the wealth, which is allowed to vary with the benchmark value $b$.
	Suppose that for each $b\in E$:
	\begin{enumerate}[(a)]
		\item $-\infty< \underline{x}_b < +\infty$, and $\mathbb{E}\left[\xi\underline{x}_B\right]>-\infty$;
		\item $U(\cdot, b)$ is non-decreasing and upper semicontinuous on $[\underline{x}_b,+\infty)$;
		\item $\limsup\limits_{x\to +\infty}\frac{U(x,b)}{x}=0$.
	\end{enumerate}
\end{assumption}
We give some explanations on these conditions. All of them coincide with the classical theory. Condition (a) means that
for any variable $X$ under consideration in Problem \eqref{problem1}, one should have $X \geq \underline{x}_B$ (or sometimes $X>\underline{x}_B$). Condition (a) is consistent with the classical settings, as we embed the restriction on the lower bound of the return $X$ into the utility function $U$. For the instance of a (univariate) CRRA utility $u$, the wealth $X$ is required to be nonnegative, and the domain of the utility $u$ is $[0,+\infty)$. Here, the domain is extended to $\mathbb{R}$ and the value of the left tail should be $-\infty$; in this case, we have $\underline{x}=0$. For an S-shaped utility $u$, the wealth $X$ is bounded from below by a deterministic liquidation level $L \in \R$, and $u$ should be truncated at the finite left endpoint $L$ and the value on the left tail is $-\infty$; in this case, $\underline{x} = L$. Moreover, the constraint $\mathbb{E}[\xi\underline{x}_B]>-\infty$ holds automatically in the univariate case where $\underline{x}_B$ usually equals $0$ or other deterministic numbers. The assumption is easy to satisfy, as in the classic case we often assume $\underline{x}_B=0$. Condition (b) means that the utility functions do not necessarily have concavity and differentiability, and include S-shaped functions and step functions. Upper semicontinuity is indeed equivalent to right-continuity when the nondecreasing property holds in Assumption \ref{ass_standing}. It leads to two possible cases at  $\underline{x}_b$:
\begin{enumerate}[(I)]
	\item $U(\underline{x}_b,b)>-\infty$ with $U(\cdot,b)$ being right-continuous at $\underline{x}_b$, and $U(x,b)=-\infty$ for $x<\underline{x}_b$, i.e., a truncation occurs at $\underline{x}_b$;
	\item $U(\underline{x}_b,b)=-\infty$, and $\lim\limits_{x\to\underline{x}_b+}U(x,b)=-\infty$.
\end{enumerate}
Hence, Condition (b) contains both power utilities (type I for positive exponents and type II for negative exponents) and logarithm utilities (type II). Condition (c) is required to ensure the finiteness of the optimization problem if the utility function is not differentiable. It is slightly weaker than the classical condition ($u'(+\infty)=0$) 
and can be interpreted as the diminishing marginal utility. 
Unless specified, we suppose that Assumption \ref{ass_standing} holds throughout.

To study Problem \eqref{problem1}, the following two assumptions are also helpful:
\begin{assumption}[Well-definedness]\label{asmp:well-defined}
	Problem \eqref{problem1} is well-defined\footnote{Similar to the concept ``finite", if Problem \eqref{problem1} is infeasible, or there is no wealth variable satisfying all constraints in Problem \eqref{problem1}, we still call it well-defined (but meaningless).}, i.e., for every random variable $X$ satisfying $\mathbb{E}\left[\xi X\right]\le x_0$ and $U(X,B)>-\infty~\text{a.s.}$, the expectation $\mathbb{E}\left[U(X,B)\right]$ is well-defined, i.e.,\footnote{Throughout, we denote $x^+ = \max\{x, 0\}$ and $x^- = -\min\{x, 0\}$ for any $x \in \R$. }
	\begin{equation}\label{eq:well-defined}
		\mathbb{E}\left[U(X,B)^+\right]<+\infty \text{ or }\mathbb{E}\left[U(X,B)^-\right]<+\infty.
	\end{equation}
\end{assumption}

\begin{assumption}[Finiteness of the problem]\label{ass_finite}
	Problem \eqref{problem1} is finite, i.e.,
	$$
	\sup_{X: \mathbb{E}\left[\xi X\right]\le x_0\atop U(X,B)>-\infty}~\mathbb{E} \left[U\left(X,B\right)\right] < +\infty.
	$$
\end{assumption}

Based on Assumption \ref{asmp:well-defined} and the fact that $U(\cdot,b)$ is nondecreasing, it is equivalent to study Problem \eqref{problem1} with the budget constraint $\mathbb{E}[\xi X] = x_0$. If not, we can replace $X$ by $X'=X+c \id_{\{\xi<n\}}$ for some $c, n > 0$, which will increase both $\mathbb{E}[\xi X]$ and $\mathbb{E}[U(X,B)]$. For the second coordinate $b \in E$, as we only require the measurability, we will refer to the first coordinate $x$ when discussing the other properties of $U$, such as concavity, differentiability, etc. 

In Proposition \ref{suff cond}, we give a sufficient condition of Assumptions \ref{asmp:well-defined} and \ref{ass_finite}, in order to conveniently verify the two assumptions. 

\begin{proposition}\label{suff cond}
	Suppose that $\xi\in L^1(\Omega)$ {\color{black} with Assumption \ref{ass_standing} and for some $\delta\in(0,1)$ both of the followings hold:}
	\begin{enumerate}
		\item[(1)] There exist $u_1(b)\ge0$, $u_2(b)\ge0$ and $K(b)\ge0$ {\color{black} for every $b\in E$} such that for any $x>K(b)$, $U(x,b)\le u_1(b)+u_2(b)x^\delta$ with $\xi^{-\delta} u_1(B)\in L^{\frac{1}{1-\delta}}(\Omega)$ and $\xi^{-\delta} u_2(B)$$\in L^{\frac{1}{1-\delta}}(\Omega)$ , and $\xi K(B)\in L^1(\Omega)$;
		\item[(2)] {\color{black} There exists $\theta(b)\ge0$, $b\in E$ such that   $\xi\theta(B)\in L^1(\Omega)$ and $\xi^{-\delta}\gamma(B)\in L^{\frac{1}{1-\delta}}(\Omega)$,
		where $\gamma(b)=U(\underline{x}_b+\theta(b),b)$.}
	\end{enumerate}
	Then  Problem \eqref{problem1} is well-defined and finite for any $x_0\in\R$.
\end{proposition}

We point out that Assumption \ref{asmp:well-defined} holds automatically if $U$ itself has a lower bound on its domain, and we also have another sufficient condition proposed in Section \ref{section_condition}. For Assumption \ref{ass_finite}, it is not a crucial assumption in this paper. In fact, as one of the main results, Theorem \ref{main} gives the existence and expressions of the optimal solution without using the finiteness assumption, and one can verify the finiteness after the expression of the optimal solution is solved. If fortunately, the problem is finite, then Theorem \ref{general thm} shows that the expression of the optimal solution is unique. Assumption \ref{ass_finite} is only needed in the discussion on the unique expression of the solution. Moreover, we also have a tractable result of the finiteness issue in Section \ref{section_condition}.

Noting that the benchmark variable $b$ is involved in every parameter above, we have to give integrability conditions on each of them. The conditions we propose in Proposition \ref{suff cond} seem complicated, but it is indeed easy to satisfy; see the following examples.
\begin{example}
	If $B$ is deterministic, then $u_1(B),~u_2(B),~K(B),~\theta(B)$ and $\gamma(B)$ are all deterministic, and hence we only need $\mathbb{E}[\xi]<+\infty$ and $\mathbb{E}[\xi ^{-\frac{\delta}{1-\delta}}]<+\infty$, which hold if $\xi$ is lognormal.
\end{example}
\begin{example}\label{eg2}
	Fix $b \in \R$. For an S-shaped utility
	$
	U(x,b)=(x-b)^p\id_{(b,+\infty)}(x)-k(b-x)^p\id_{[0,b]}(x)
	$,
	if we take $u_1(b)=1$,~$u_2(b)=0$, $K(b)=0$, $\delta=p$, $\theta(b)=b$, $\gamma(b)=0$, then Proposition \ref{suff cond} requires  $\mathbb{E}[\xi]<+\infty$, $\mathbb{E}[\xi^{-\frac{p}{1-p}}]<+\infty$ and $\mathbb{E}[\xi B]<+\infty$. If we use $\theta(b)=0$, $\gamma(b)=-kb^p$, then $\mathbb{E}[\xi B]<+\infty$ can be replaced by $\mathbb{E}[\xi^{-\frac{p}{1-p}}B^{\frac{p}{1-p}}]<+\infty$.
\end{example}


%

%
%
%
%
%
%
%



Finally, we define the bliss point (cf. \citet{BH1998})
\begin{equation}\label{eq:bliss}
	\overline{x}_b\triangleq\inf\left\{x\in\mathbb{R}:U(x,b)=U(+\infty, b)\right\}.
\end{equation}
In most literature, $\overline{x}_b = +\infty$ for each $b \in E$. In our model, $\overline{x}_b \in (-\infty,+\infty]$ and is allowed to be finite. In this light, a wealth variable $X$ is called a bliss solution, if $X\ge\overline{x}_B$, i.e., $U(X(\omega),B(\omega))$ attains its maximum at every state point $\omega$ without any risk; see Remark \ref{rmk_bliss} later for more details.

\section{Optimality}
\label{section_general}

In this section, we give our main results on the optimal solution(s) in Theorems \ref{thm_concave}-\ref{general thm}. 
To begin with, we develop the state-dependent Legendre-Fenchel transformation. For $b\in E$ and $0\le y < +\infty$, we define the conjugate function $V_b$ and the conjugate set $\mathcal{X}_b$ of $U(\cdot,b)$ as
\begin{eqnarray}	 \label{conjugate}
	&&V_b(y)=\sup_{x\in\mathbb{R}}{\big[U(x,b)-yx\big]}\in\mathbb{R}\cup\{+\infty\},\\
	&&	\mathcal{X}_b(y)=\mathop{\arg\sup}_{x\in\mathbb{R}\cup\{\pm\infty\}}{\big[U(x,b)-yx\big]}. \label{conjugate point}
\end{eqnarray}
As $U$ is upper semicontinuous, if $x_n\to x\in\mathbb{R}$ with $U(x_n,b)-yx_n\to V_b(y)$, then
\begin{equation*}
	V_b(y)=\lim_{n\to+\infty}\(U(x_n,b)-y x_n\)\le U(x,b)-yx\le V_b(y).
\end{equation*}
Hence $x\in\mathcal{X}_b(y)$. Thus, the notation \eqref{conjugate point} can be expressed as following:
\begin{itemize}
	\item If $x\in\mathbb{R}$, then $x\in\mathcal{X}_b(y)$ if and only if $U(x,b)-yx=V_b(y)$.
	\item If $x\in\{\pm\infty\}$, then $x\in\mathcal{X}_b(y)$ if and only if {\color{black} there exists a sequence of real numbers$\{x_n\}\uparrow$(or $\downarrow$) $x$ such that $U(x_n,b)-yx_n\to V_b(y)$.} 
\end{itemize}
And we define $\mathcal{X}_b(+\infty)=\{\underline{x}_b\}$; see also Lemma \ref{lem}(iv) for this definition. Then $\mathcal{X}_b(y)$ is non-empty for $y\in[0,+\infty]$.
For $\lambda \in [0, +\infty]$, we define a ``random set"
\begin{eqnarray}\label{eq:set_rv}	
	\mathcal{X}_B(\lambda \xi):\Omega\to 2^{\R\cup\{\pm\infty\}}, ~\omega\mapsto \mathcal{X}_{B(\omega)}(\lambda \xi(\omega)).
\end{eqnarray}

Compared to the classical results, one may conjecture that $\X_B(\lambda \xi)$ is the optimal solution to Problem \eqref{problem1}. However, it is worth pointing out that $\mathcal{X}_B(\lambda\xi)$ here is defined in terms of a set, as it may become no longer unique when
$U(\cdot,b)$ is non-concave for some $b \in E$. Hence, we have to study the random set $\X_B(\lambda \xi)$ for the optimal solutions. Indeed, Theorem \ref{general thm} shows that the optimal solution is still located in $\X_B(\lambda \xi)$, but different from the classical case, we need extra work to find out a measurable selection; see Section \ref{section_finite} later. In the following content, we will use the notation $\mathcal{X}_b^U$ and $\underline{x}_b^U$ instead of $\mathcal{X}_b$ and $\underline{x}_b$ in case of possible confusion.




Now we present our main results, some of which require Assumption \ref{ass_finite}; details on the finiteness issue will be studied in Sections \ref{section_finite}-\ref{section_condition}. We first investigate the concave utility function in Theorem \ref{thm_concave}. In the concave case, we do not assume that the utility satisfies conditions such as the Inada condition or $\underline{x}_b>-\infty$, which means that $U$ can take finite values on $\mathbb{R}$. {\color{black}We also do not require the probability measure $\mathbb{P}$ to be non-atomic.}

\begin{theorem}\label{thm_concave}
	Suppose that Assumptions \ref{asmp:well-defined}-\ref{ass_finite} hold and $U(\cdot, b)$ is nondecreasing, upper semicontinuous and concave for any $b \in E$. We have that $X$ is an optimal solution of Problem \eqref{problem1} if and only if
	\begin{equation}
		X\in \mathcal{X}_B(\lambda\xi) \text{ a.s. for some } \lambda\in[0,+\infty]
	\end{equation}
	satisfying the budget constraint $\mathbb{E}[\xi X]=x_0$ and $U(X,B)>-\infty$.
\end{theorem}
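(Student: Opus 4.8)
The plan is to prove the two implications separately, after two reductions. First, by the observation following Assumption~\ref{asmp:well-posed} (replacing $X$ by $X+c\one_{\{\xi<n\}}$ strictly raises both $\E[\xi X]$ and $\E[U(X,B)]$), it suffices to work throughout with the \emph{binding} budget constraint $\E[\xi X]=x_0$, both for a candidate optimizer $X$ and for competing wealths $\tilde X$; note also that $\E[\xi X]=x_0$ with $\xi>0$ forces $X$ to be real-valued a.s. Second, since each $U(\cdot,b)$ is concave, the characterization~\eqref{equiv1} says exactly that $X\in\X_B(\lambda\xi)$ a.s.\ is equivalent to $\lambda\xi(\omega)$ lying in the superdifferential $[U'_+(X(\omega),B(\omega)),U'_-(X(\omega),B(\omega))]$ — read as $[U'_+(\underline{x}_B,B),+\infty]$ on $\{X=\underline{x}_B\}$ — for a.e.\ $\omega$, and such a $\lambda\in[0,+\infty]$ exists if and only if
\begin{equation*}
\esssup_{\Omega}\frac{U'_+(X,B)}{\xi}\ \le\ \essinf_{\{X>\underline{x}_B\}}\frac{U'_-(X,B)}{\xi},
\end{equation*}
in which case any $\lambda$ between the two sides works. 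So the content of the theorem is that optimality of $X$ (which, being a constraint, already carries $\E[\xi X]=x_0$ and $U(X,B)>-\infty$ a.s.) is equivalent to this single two-sided bound.

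For \textbf{sufficiency} I would argue by the Fenchel inequality. Assume $X\in\X_B(\lambda\xi)$ a.s., $\E[\xi X]=x_0$ and $U(X,B)>-\infty$ a.s. If $\lambda<+\infty$, then since the real number $X(\omega)$ attains the supremum defining $V_{B(\omega)}(\lambda\xi(\omega))$ we have $U(x,B)-\lambda\xi x\le V_B(\lambda\xi)=U(X,B)-\lambda\xi X$ for every $x\in\R$ and a.e.\ $\omega$; inserting $x=\tilde X(\omega)$ for any feasible $\tilde X$ gives the pointwise bound $U(\tilde X,B)\le U(X,B)+\lambda\xi(\tilde X-X)$. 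Taking expectations — the integrability being routine, since the budget constraints control the positive parts of $\xi X,\xi\tilde X$ and $X,\tilde X\ge\underline{x}_B$ controls their negative parts — and using $\lambda\ge0$ together with $\E[\xi\tilde X]\le x_0=\E[\xi X]$ yields $\E[U(\tilde X,B)]\le\E[U(X,B)]$, so $X$ is optimal. The case $\lambda=+\infty$ is separate and trivial: then $X=\underline{x}_B$ a.s., and any feasible $\tilde X$ satisfies $\tilde X\ge\underline{x}_B$ with $\E[\xi\tilde X]\le x_0=\E[\xi\underline{x}_B]$, so $\xi(\tilde X-\underline{x}_B)\ge0$ has zero mean and $\tilde X=\underline{x}_B=X$ a.s.

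For \textbf{necessity} I would use a variational argument. Let $X$ be optimal and fix a feasible $\tilde X$; we may assume $\E[U(X,B)],\E[U(\tilde X,B)]\in\R$, since $\tilde X$ with $\E[U(\tilde X,B)]=-\infty$ cannot beat $X$ and the degenerate case of optimal value $-\infty$ is trivial. Since each $U(\cdot,b)$ is concave and $X,\tilde X\ge\underline{x}_B$, the convex combinations $X_t=(1-t)X+t\tilde X$, $t\in[0,1]$, remain feasible (linearity of the budget; concavity, with $X,\tilde X>\underline{x}_B$ in the type-II case, for $U(X_t,B)>-\infty$ a.s.), so $\Phi(t):=\E[U(X_t,B)]$ is finite and concave on $[0,1]$, maximal at $t=0$, whence $\Phi'_+(0)\le0$. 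By concavity of $U(\cdot,b)$ the difference quotients $\frac{U(X_t,B)-U(X,B)}{t}$ decrease, as $t\downarrow0$, to $\psi_{\tilde X}:=(\tilde X-X)^+\,U'_+(X,B)-(\tilde X-X)^-\,U'_-(X,B)$; after restricting the perturbation to suitable subsets of positive probability so that the $t=1$ quotient is bounded below by an integrable function, monotone convergence gives $\E[\psi_{\tilde X}]=\Phi'_+(0)\le0$. To deduce the two-sided bound, suppose it fails and choose $\mu$ strictly between its sides; then $C:=\{U'_+(X,B)>\mu\xi\}$ and $A:=\{X>\underline{x}_B,\ U'_-(X,B)<\mu\xi\}$ both have positive probability and are disjoint (as $U'_+\le U'_-$). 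Shrinking $A$ to a subset on which $X$ stays comfortably above $\underline{x}_B$ while $\xi$ and the relevant utility values are bounded, and shrinking $C$ likewise, choose small $s_C,s_A>0$ with $s_C\,\E[\xi\one_C]=s_A\,\E[\xi\one_A]$ and form the budget-neutral feasible perturbation $\tilde X:=X+s_C\one_C-s_A\one_A$; then $\psi_{\tilde X}=s_C\one_C\,U'_+(X,B)-s_A\one_A\,U'_-(X,B)$, so
\begin{equation*}
\E[\psi_{\tilde X}]\ >\ \mu\,s_C\,\E[\xi\one_C]-\mu\,s_A\,\E[\xi\one_A]\ =\ 0,
\end{equation*}
contradicting $\E[\psi_{\tilde X}]\le0$. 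Hence the two-sided bound holds; picking $\lambda$ in the resulting interval and applying~\eqref{equiv1} gives $X\in\X_B(\lambda\xi)$ a.s.

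I expect the main obstacle to be entirely on the necessity side: making the perturbation argument rigorous \emph{without} a measurable selection of the random superdifferential — i.e.\ choosing the truncations so that monotone convergence legitimately yields $\E[\psi_{\tilde X}]\le0$, confirming that $X_t$ and the transfer perturbation stay feasible (in particular $U(X_t,B)>-\infty$ a.s.\ and finiteness of the relevant expectations), and checking that all degenerate configurations are consistent with the claim: $\lambda\in\{0,+\infty\}$, $\p(X>\underline{x}_B)=0$, bliss solutions with $X\ge\overline{x}_B$, the situation $U'_+(\underline{x}_B,B)=+\infty$ on a positive-probability set (which forces $\lambda=+\infty$ and $X=\underline{x}_B$), and the pathological case where the optimal value equals $-\infty$. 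These are precisely the points that, once $U$ is allowed to be non-concave or the problem non-finite, grow into the measurability and Lagrange-multiplier pathologies addressed in the rest of the paper.
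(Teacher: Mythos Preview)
Your approach is essentially the paper's: sufficiency by the Fenchel inequality (deferred in the paper to the ``if'' part of Theorem~\ref{general thm}(ii)), necessity by a variational argument with budget-neutral perturbations of $X$. The only organizational difference is that the paper packages your two-sided-bound-plus-contradiction step as a standalone Lemma~\ref{lemma2} (applied with $Z=\xi$, $Z_1=U'_+(X,B)$, $Z_2=U'_-(X,B)$ on truncation sets $A_n=\{\xi\le n,\,X-\underline{x}_B>1/n,\,U'_+(X-1/n,B)<n\}$) and uses dominated rather than monotone convergence after those same truncations---your ``bounded below by an integrable function'' should read ``bounded'' (so that DCT applies to the decreasing difference quotients), but the intent is clear and the argument is correct.
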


For the non-concave case, to give a concavification theorem under the multivariate setting, there are many technical issues such as measurability to be addressed. Hence we need to investigate in detail the properties of the concavification of functions satisfying certain conditions. In the following Theorem \ref{general thm}, we give results on general utility functions {\color{black}under the assumption that the probability measure is non-atomic.}  
\begin{theorem}\label{general thm}
	Suppose that $(\Omega,\mathcal{F},\mathbb{P})$ is non-atomic and Assumptions \ref{ass_standing}-\ref{asmp:well-defined} hold.
	\begin{enumerate}[(i)]
		\item The concavification problem $\sup\limits_{X: \mathbb{E}[\xi X]\le x_0\atop \tilde{U}(X,B)>-\infty}\mathbb{E}[\tilde{U}(X,B)]$ is well-defined, and we have
		\begin{equation}\label{eq:equivalence}
			\sup_{X:\mathbb{E}[\xi X]\le x_0\atop U(X,B)>-\infty} \mathbb{E}[U(X,B)]=\sup_{X: \mathbb{E}[\xi X]\le x_0\atop \tilde{U}(X,B)>-\infty}\mathbb{E}[\tilde{U}(X,B)],
		\end{equation}
		where $\tilde{U}(\cdot,b)$ is the concavification of $U(\cdot,b)$ {\color{black} on $\mathbb{R}$} for any $b \in E$;
		\item Suppose further that Assumption \ref{ass_finite} holds. Then $X$ is an optimal solution if and only if $X\in\mathcal{X}^U_B(\lambda\xi)$ a.s. for some $\lambda\in[0,+\infty]$ with $\mathbb{E}[\xi X]=x_0$ and $U(X,B)>-\infty$.
	\end{enumerate}
\end{theorem}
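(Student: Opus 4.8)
\textbf{Proof proposal for Theorem \ref{general thm}.}

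The plan is to reduce the non-concave problem to the concave one (part (i)) and then invoke Theorem \ref{thm_concave} applied to the family $\tilde U(\cdot,b)$ (part (ii)). For part (i), the inequality ``$\le$'' is immediate since $\tilde U \geq U$ pointwise and every $X$ feasible for the $U$-problem (i.e. $U(X,B)>-\infty$ a.s., $\E[\xi X]\le x_0$) is feasible for the $\tilde U$-problem with a larger objective; this also gives well-posedness of the concavified problem once we know the original is well-posed (Assumption \ref{asmp:well-posed}) together with the affine domination from Assumption \ref{ass_G}(C). The substance is the reverse inequality ``$\ge$'': given any $X$ with $\tilde U(X,B)>-\infty$ a.s.\ and $\E[\xi X]\le x_0$, I must construct, for every $\epsilon>0$, a random variable $X_\epsilon$ with $U(X_\epsilon,B)>-\infty$ a.s., $\E[\xi X_\epsilon]\le x_0$, and $\E[U(X_\epsilon,B)]\ge \E[\tilde U(X,B)]-\epsilon$. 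The idea is the classical concavification/randomization argument carried out state-by-state in $b$: on the set where $\tilde U(X,B) \le U(X,B)+\tfrac1n$ we keep $X$; on the set where $\tilde U(X,B)>U(X,B)+\tfrac1n$, Lemma \ref{concavification1}(2),(5),(6) tells us $\tilde U(\cdot,b)$ is affine on a maximal interval straddling $X(\omega)$ with endpoints $H_n(X(\omega),B(\omega))<X(\omega)<G_n(X(\omega),B(\omega))$ at which $\tilde U$ and $U$ are within $\tfrac1n$, so we split $X(\omega)$ into a two-point randomization on $\{H_n,G_n\}$ with the conditional mean equal to $X(\omega)$. Because $(\Omega,\F,\p)$ is non-atomic, such a randomizing device (a uniform random variable independent of everything relevant) exists; the resulting $X_\epsilon$ has the same $\xi$-price in expectation by construction of the weights and an objective value at least $\E[\tilde U(X,B)]-\tfrac1n$, using affineness of $\tilde U$ on the interval and the $\tfrac1n$-closeness at the endpoints. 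Here Assumption \ref{asmp:Hn} is exactly what guarantees the negative part of $\xi H_n$ is integrable so that $\E[\xi X_\epsilon]$ is well-defined and still $\le x_0$ after the split; one then takes $n\to\infty$.

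The main obstacle, and the place where most of the work goes, is \emph{measurability}: the maps $\omega\mapsto H_n(X(\omega),B(\omega))$ and $\omega\mapsto G_n(X(\omega),B(\omega))$ must be shown to be random variables, and the two-point splitting must be performed in a jointly measurable way so that $X_\epsilon$ is a genuine random variable and the independent randomizer can be attached. I would handle this by showing $(t,b)\mapsto H_n(t,b)$ and $(t,b)\mapsto G_n(t,b)$ are measurable on $\R\times E$ — using that $\tilde U(x,b)$ and $U(x,b)$ are measurable in $b$ for each $x$ (inherited from Assumption \ref{ass_G}, with $\tilde U$ measurable via the countable sup representation in Lemma \ref{concavification1}(6)) and right-continuous/monotone in $x$, so the sup/inf defining $H_n,G_n$ can be taken over rationals — and then composing with the measurable map $\omega\mapsto(X(\omega),B(\omega))$. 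The randomization itself: since the space is non-atomic, build a $\U[0,1]$ variable $\Theta$ independent of $(X,B,\xi)$ (via Proposition A.31-type facts cited in the paper), and set $X_\epsilon=G_n(X,B)$ on $\{\Theta\le p\}$ and $X_\epsilon=H_n(X,B)$ on $\{\Theta>p\}$ where $p=p(X(\omega),B(\omega))$ is the measurable weight making the conditional mean $X(\omega)$; then $\E[\xi X_\epsilon\mid X,B,\xi]=\xi X$ and $\E[U(X_\epsilon,B)\mid X,B,\xi]\ge \tilde U(X,B)-\tfrac1n$ on the relevant set. On the complement we keep $X$, where already $U(X,B)\ge\tilde U(X,B)-\tfrac1n$. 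This yields $\E[U(X_\epsilon,B)]\ge\E[\tilde U(X,B)]-\tfrac1n$ and $\E[\xi X_\epsilon]=\E[\xi X]\le x_0$, proving ``$\ge$''. Note $U(X_\epsilon,B)>-\infty$ a.s.\ because at $H_n$ and $G_n$ the value $U$ is finite by Assumption \ref{ass_G}(D) / Lemma \ref{concavification1}(3),(5).

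For part (ii), assuming additionally Assumption \ref{ass_finite}, the concavified problem satisfies all hypotheses of Theorem \ref{thm_concave} (with the concave family $\tilde U(\cdot,b)$ in place of $U(\cdot,b)$; finiteness transfers by \eqref{eq:equivalence}), so $X$ solves the concavified problem iff $X\in\mathcal X^{\tilde U}_B(\lambda\xi)$ a.s.\ for some $\lambda\in[0,+\infty]$ with $\E[\xi X]=x_0$ and $\tilde U(X,B)>-\infty$. It remains to translate optimality for $\tilde U$ into optimality for $U$ and to identify $\mathcal X^{\tilde U}_B(\lambda\xi)$ with $\mathcal X^{U}_B(\lambda\xi)$ at the points that matter. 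In one direction: if $X$ is $U$-optimal then, since $\E[U(X,B)]=\E[\tilde U(X,B)]$ by \eqref{eq:equivalence} and the sup values agree, $X$ is also $\tilde U$-optimal, forcing $U(X,B)=\tilde U(X,B)$ a.s.; combined with the conjugate-point characterization \eqref{equiv1} for $\tilde U$ and the elementary fact that $V^{\tilde U}_b=V^U_b$ (the conjugate is insensitive to concavification) together with $U(x,b)=\tilde U(x,b)$ at optimizers, one gets $X\in\mathcal X^U_B(\lambda\xi)$ a.s. Conversely, if $X\in\mathcal X^U_B(\lambda\xi)$ a.s.\ with the budget binding and $U(X,B)>-\infty$, then at those points $U(X,B)-\lambda\xi X=V^U_B(\lambda\xi)=V^{\tilde U}_B(\lambda\xi)$, so $X\in\mathcal X^{\tilde U}_B(\lambda\xi)$ and $\tilde U(X,B)=U(X,B)$; Theorem \ref{thm_concave} makes $X$ $\tilde U$-optimal, hence $\E[U(X,B)]=\E[\tilde U(X,B)]$ equals the common supremum, i.e.\ $X$ is $U$-optimal. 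The only delicate point is verifying $V^U_b=V^{\tilde U}_b$ and that $U=\tilde U$ on $\mathcal X^U_b(y)$, which follow from the definition of the concavification as the least concave majorant and Lemma \ref{concavification1}(1)-(2),(6).
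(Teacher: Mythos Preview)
Your plan for part (ii) is correct and essentially matches the paper: reduce to Theorem~\ref{thm_concave} applied to $\tilde U$, use that an optimal $X$ for the $U$-problem must satisfy $\tilde U(X,B)=U(X,B)$ a.s.\ (since the two suprema coincide), and exploit $V_b^{U}=V_b^{\tilde U}$ to pass between $\mathcal X_B^{\tilde U}(\lambda\xi)$ and $\mathcal X_B^{U}(\lambda\xi)$.

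For part (i), however, there is a genuine gap in your execution. You write ``since the space is non-atomic, build a $\U[0,1]$ variable $\Theta$ independent of $(X,B,\xi)$'', but non-atomicity of $(\Omega,\F,\p)$ does \emph{not} guarantee any such independent randomness. For instance, take $\Omega=[0,1]$ with Lebesgue measure and $\xi$ injective (so $\sigma(\xi)=\F$); then every $\F$-measurable variable is $\sigma(\xi)$-measurable and nothing nontrivial is independent of $\xi$. Your conditional-expectation identity $\E[\xi X_\epsilon\mid X,B,\xi]=\xi X$ hinges precisely on $\Theta\perp(X,B,\xi)$, so without it the budget equality $\E[\xi X_\epsilon]=\E[\xi X]$ need not hold. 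Enlarging the probability space does not help either, since the feasible set in Problem~\eqref{problem1} consists of random variables on the original $(\Omega,\F,\p)$.

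The paper avoids this by a different construction that uses only non-atomicity (via Sierpi\'nski's Lemma~\ref{prop1}) and no auxiliary independence. On the set $Q=\{\tilde U(X,B)>U(X,B)\}$ it takes a countable partition $Q=\bigcup_n Q_n$ on which $\hat H,\hat G,\xi$ and the affine slope $a_B=\frac{\tilde U(\hat G,B)-\tilde U(\hat H,B)}{\hat G-\hat H}$ are bounded. On each $Q_n$ it sets $\hat X=\hat G$ on $D_n\subset\{\xi\le\sigma a_B\}$ and $\hat X=\hat H$ on $C_n\supset\{\xi>\sigma a_B\}$, choosing $\sigma$ (and, if the target budget falls in a jump of the corresponding distribution function, refining with Lemma~\ref{prop1}) so that $\E[\xi\hat X\,\id_{Q_n}]=\E[\xi X\,\id_{Q_n}]$. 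The point of splitting along $\xi/a_B$ rather than randomly is that one gets the utility inequality for free: since $\tilde U(\hat X,B)-\tilde U(X,B)=a_B(\hat X-X)$ and $a_B\ge\xi/\sigma$ on $D_n$, $a_B\le\xi/\sigma$ on $C_n$, one obtains $\E[(\tilde U(\hat X,B)-\tilde U(X,B))\id_{Q_n}]\ge\frac1\sigma\E[\xi(\hat X-X)\id_{Q_n}]=0$. This simultaneously preserves the budget and (weakly) increases the concavified objective, with all quantities bounded on each $Q_n$ so the integrability bookkeeping (which your sketch also leaves a bit loose on the $G_n$ side) is straightforward piece by piece; Assumption~\ref{asmp:Hn} is used only when summing over $n$ to control $\E[\xi\hat X^{-}]$. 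The same construction, applied to $(\tilde U)^+$ versus $U^+$, is what yields well-posedness of the concavified problem --- your one-line appeal to ``affine domination from Assumption~\ref{ass_G}(C)'' does not suffice, since that only controls $\tilde U$ from above by an affine function of $x$ whose slope depends on $b$.
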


Although the basic strategy solving non-concave optimization is the concavification technique, many technical issues (measurability, well-definedness, etc) are required to be addressed in the non-concave and multivariate setting. In Theorem \ref{general thm}(i), we rigorously prove the availability of the concavification technique for a multivariate utility, while in the second part, we give a necessary and sufficient condition on optimal solutions, and we do not assume a priori that a Lagrange multiplier exists. Conversely, our results indicate that the existence of a Lagrange multiplier is necessary for Problem \eqref{problem1} to be solvable.

\begin{remark}\label{rmk:finiteness}
	From the proof of Theorems \ref{thm_concave}-\ref{general thm}, we will  see that the ``if'' part in Theorems \ref{thm_concave}-\ref{general thm} only needs Assumptions  \ref{ass_standing}-\ref{asmp:well-defined}, but not Assumption \ref{ass_finite}. That is, if we have found some $X\in \mathcal{X}_B(\lambda\xi)$ with some $\lambda\in[0,+\infty]$ satisfying the budget constraint $\mathbb{E}[\xi X]=x_0$ and $U(X,B)>-\infty$, then $X$ is an optimal solution to Problem \eqref{problem1} no matter whether it is finite. This will lead to a tractable Theorem \ref{prop:suff2} verifying the finiteness.
\end{remark}



To close this section, we summarize that for the benchmark $B$ and the multivariate $U$, a tractable approach is proposed to determine (the existence and expression of) the finite optimal solution $X^*$. However, in the abstract setting, we only know that $X^*\in\mathcal{X}_B(\lambda\xi)$. Different from the classical case where the optimal solution is consequently determined, we have to prove the existence of a measurable selection and find out its expression; see  Section \ref{section_finite}.

\section{Finiteness, attainability and uniqueness}\label{section_finite}
We are going to fully investigate the feasibility, finiteness, attainability and uniqueness; see the definitions in Section \ref{intro} of optimal solution(s). The results are presented in Theorem \ref{main} and summarized in Table \ref{table1} and Figure \ref{fig:ex g}.

As we have discussed in Section \ref{section_general}, to find a finite optimal solution, we desire to determine $\lambda \in [0, +\infty]$ and $X\in\mathcal{X}_B(\lambda\xi)$ satisfying $\mathbb{E}[\xi X]=x_0$. The key difficulty here is that we need to determine both a Lagrange multiplier $\lambda^*$ and a measurable selection of  $\X_B(\lambda^* \xi)$. If the desired $\lambda^*$ does not exist, then Problem \eqref{problem1} is either infinite or unattainable. In this section, we will investigate the issues of finiteness, attainability, and uniqueness. Moreover, even if the optimal solution exists, we find that it may not be unique under some novel conditions. For finiteness, we also propose sufficient conditions to attain a finite optimal solution for Problem \eqref{problem1} in Section \ref{section_condition}.

First, for any $y\in\[0,+\infty\]$ and $b \in E$, as the set $\X_b(y)$ is non-empty, we define
\begin{equation}\label{eq:supinf}
	\overline{X}_b(y) \triangleq \sup \mathcal{X}_b(y), ~~ \underline{X}_b(y) \triangleq \inf \mathcal{X}_b(y).
\end{equation}
These two quantities are maximum and minimum of the set $\X_b(y)$. They are important in measurable selection. Some properties are listed in Lemma \ref{lem}.
\begin{lemma}\label{lem}
	Suppose that $U(\cdot, b)$ is nondecreasing and upper semicontinuous for any $b \in E$, and $U(x, \cdot)$ is measurable for any $x \in \mathbb{R}$. 
	Then the functions $V_b$ (see \eqref{conjugate}), $\overline{X}_b$ and $\underline{X}_b$ satisfy:
	\begin{enumerate}[(i)]
		\item for any $y\in\[0,+\infty\]$, $ \underline{X}_b(y)\ge\underline{x}_b$, and for any $y\in\(0,+\infty\]$, $ \overline{X}_b(y)\le\overline{x}_b$ (note that $\underline{x}_b$ and $\overline{x}_b$ are defined in Assumption \ref{ass_standing});
		\item  {\color{black} for any $y\in\[0,+\infty\]$, we have} $\overline{X}_b(y)\in\mathcal{X}_b(y)$, $\underline{X}_b(y)\in\mathcal{X}_b(y)$, and $\overline{X}_b(y_2)\le \underline{X}_b(y_1)$ holds for any $0\le y_1<y_2\le+\infty$;
		\item both $\overline{X}_b(y)$ and $\underline{X}_b(y)$ are nonincreasing in $y$ and Borel-measurable in $(y,b)$, {\color{black} where $y\in\[0,+\infty\]$} ;
		\item $\lim\limits_{y\to 0+}\overline{X}_b(y)=\lim\limits_{y\to 0+}\underline{X}_b(y)=\overline{x}_b$,~$\lim\limits_{y\to +\infty}\overline{X}_b(y)=\lim\limits_{y\to +\infty}\underline{X}_b(y)=\underline{x}_b$;
		\item for any $y_0\in(0,+\infty)$, $\lim\limits_{y\to y_0-}\underline{X}_b(y)=\overline{X}_b(y_0)$,$\lim\limits_{y\to y_0+}\overline{X}_b(y)=\underline{X}_b(y_0)$.
	\end{enumerate}
\end{lemma}

\begin{figure}[htbp]
	\renewcommand{\thesubfigure}{\roman{subfigure}}
	\centering
	\subfloat[Arabic numerals][Case 1, continuous]{
		\includegraphics[width=7.5cm]{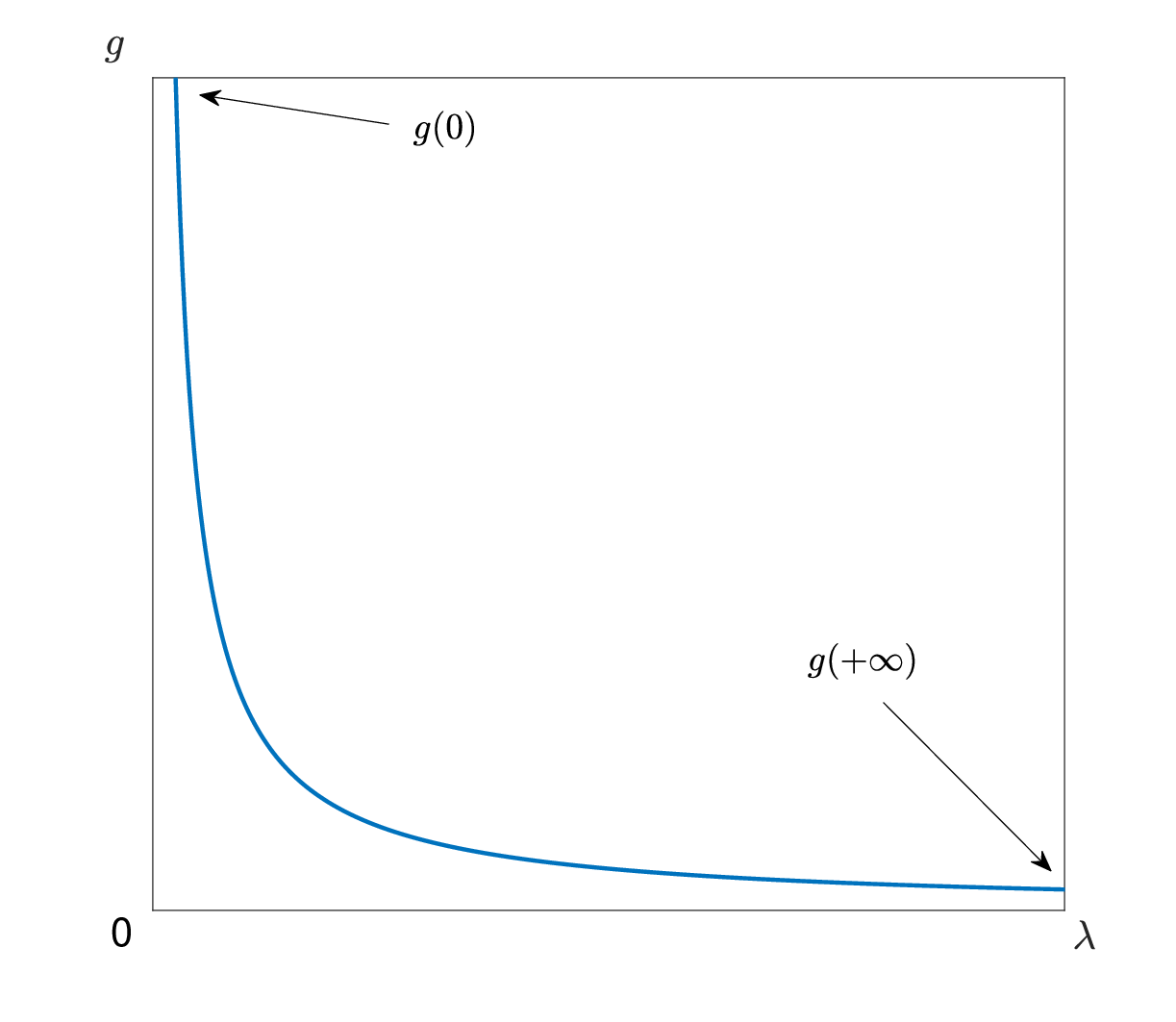}
	}
	\quad
	\subfloat[Case 1, discontinuous]{
		\includegraphics[width=7.5cm]{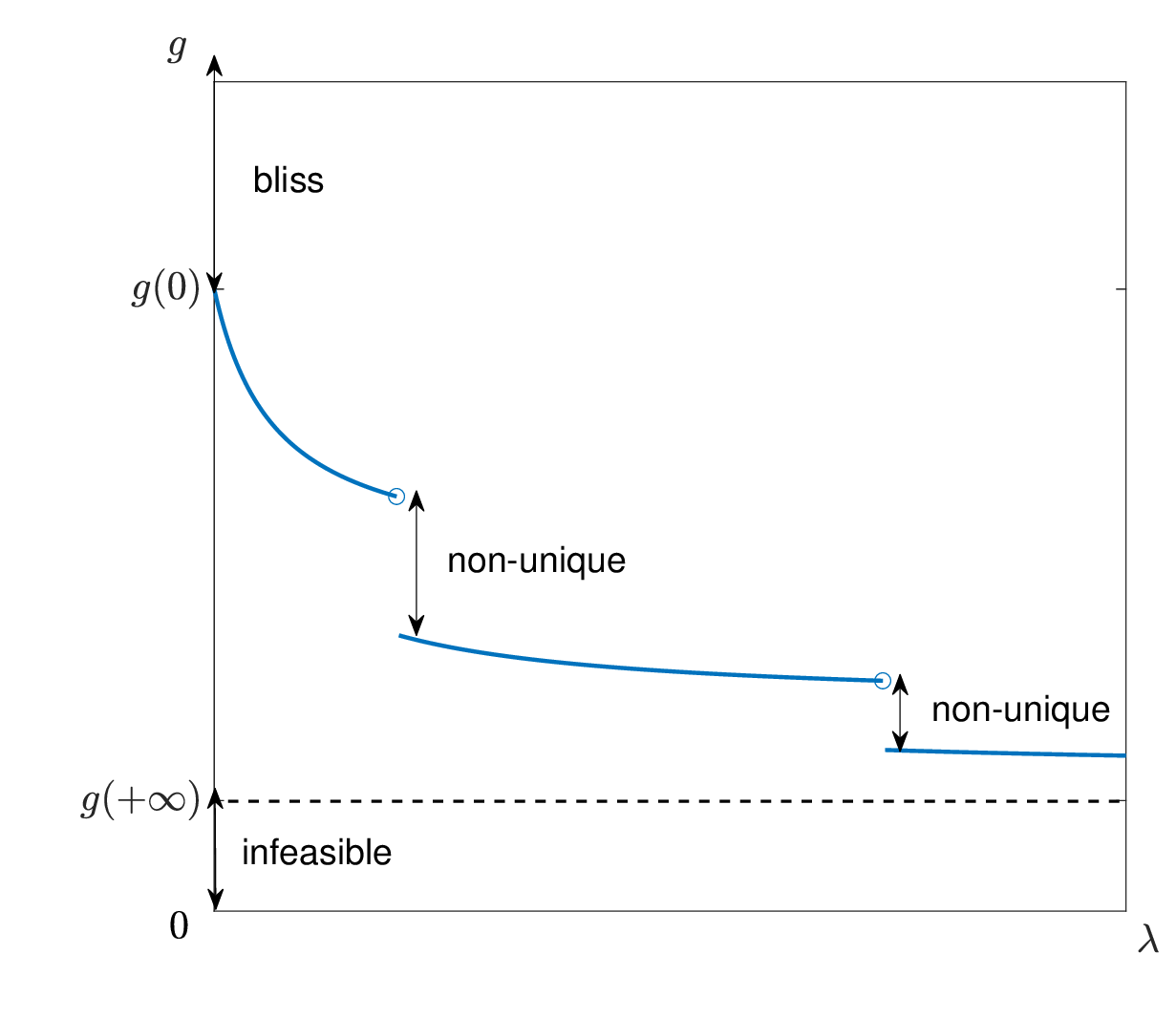}
	}
	\quad
	\subfloat[Case 2, continuous, $g(\lambda_0)<+\infty=g(\lambda_0-)$]{
		\includegraphics[width=7.5cm]{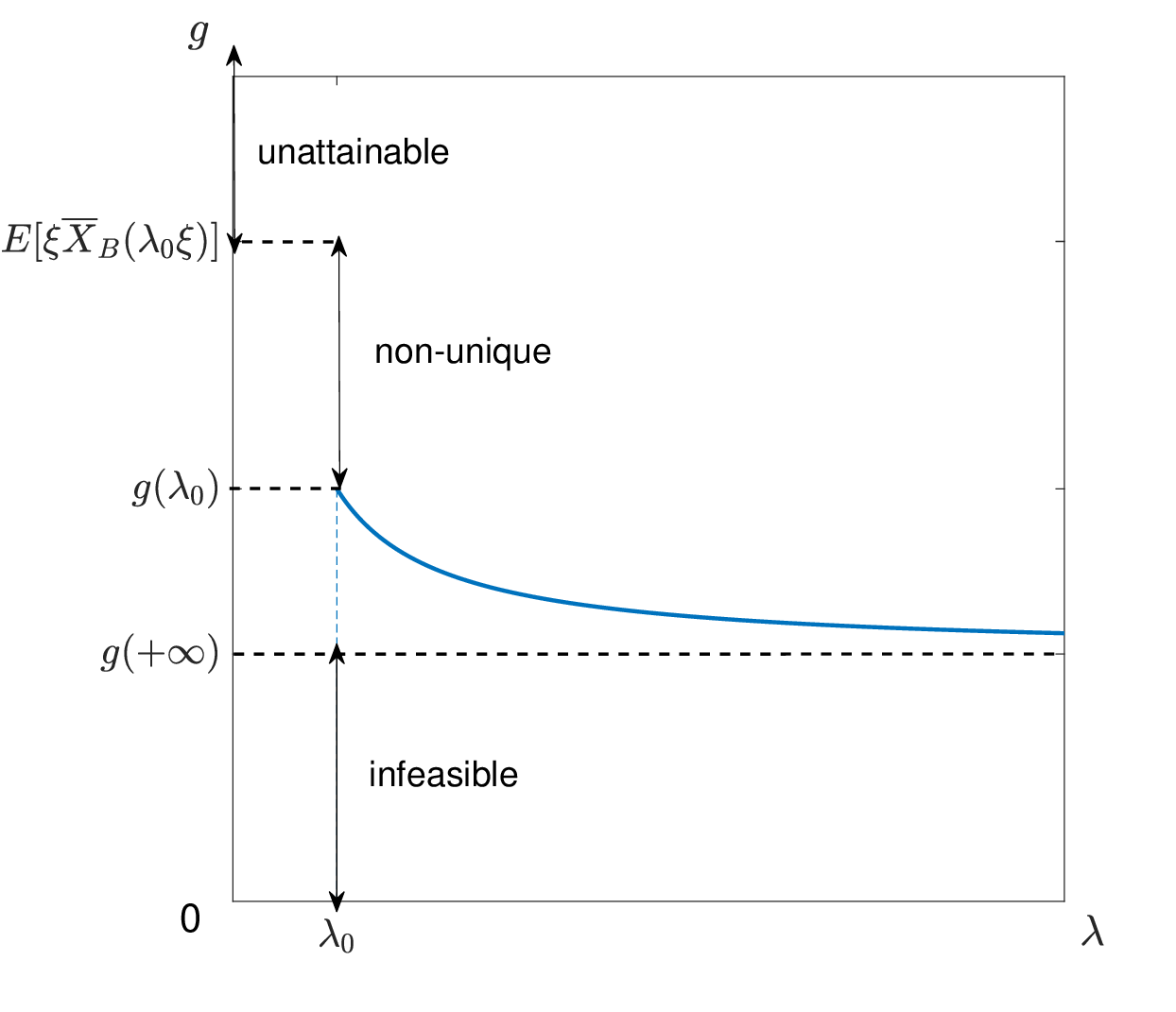}
	}
	\quad
	\subfloat[Case 2, discontinuous, $g(\lambda_0)<+\infty=g(\lambda_0-)$]{
		\includegraphics[width=7.5cm]{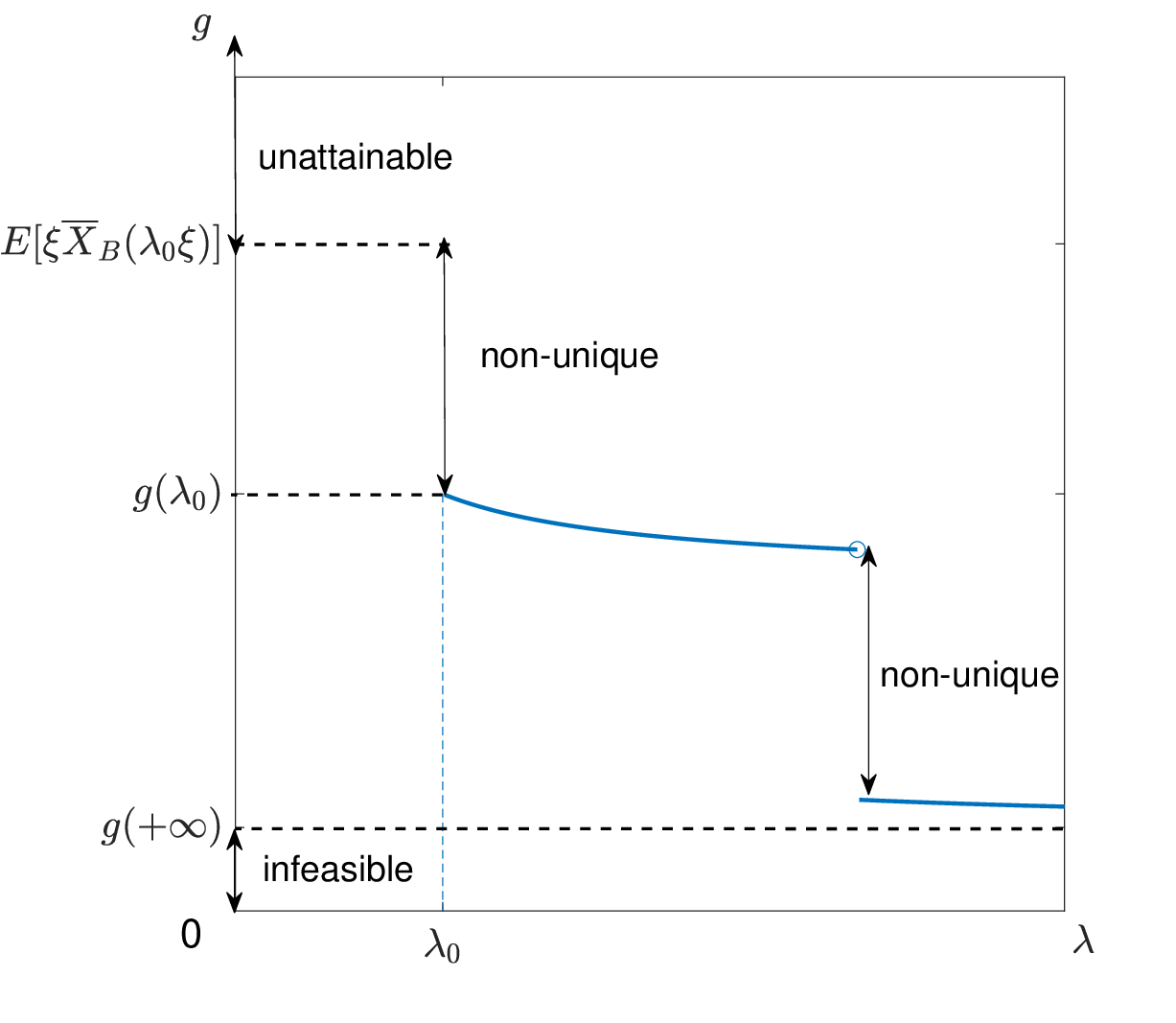}
	}
	\quad
	\subfloat[Case 2, continuous, $g(\lambda_0)=+\infty$]{
		\includegraphics[width=7.5cm]{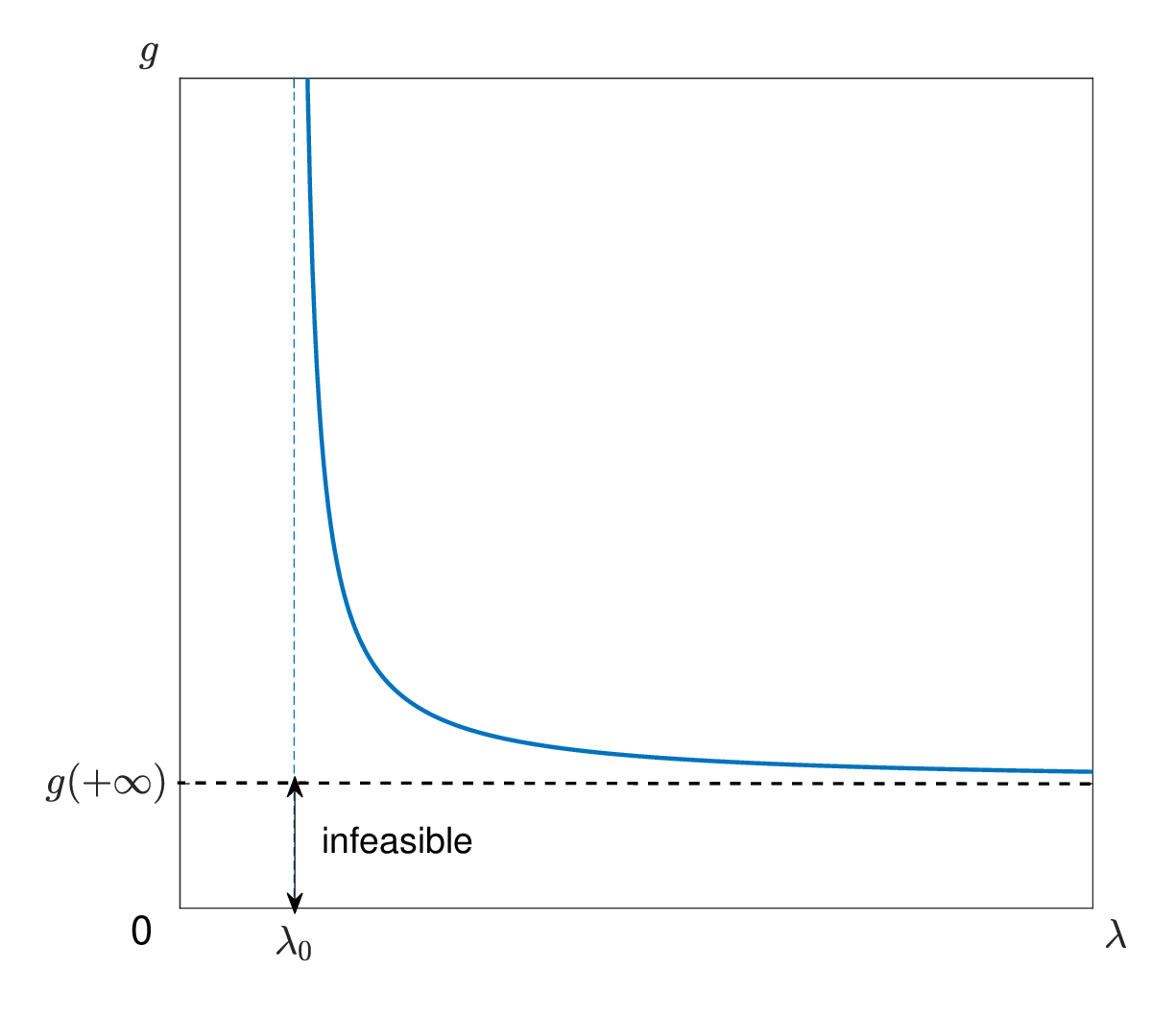}
	}
	\quad
	\subfloat[Case 2, discontinuous, $g(\lambda_0)=+\infty$]{
		\includegraphics[width=7.5cm]{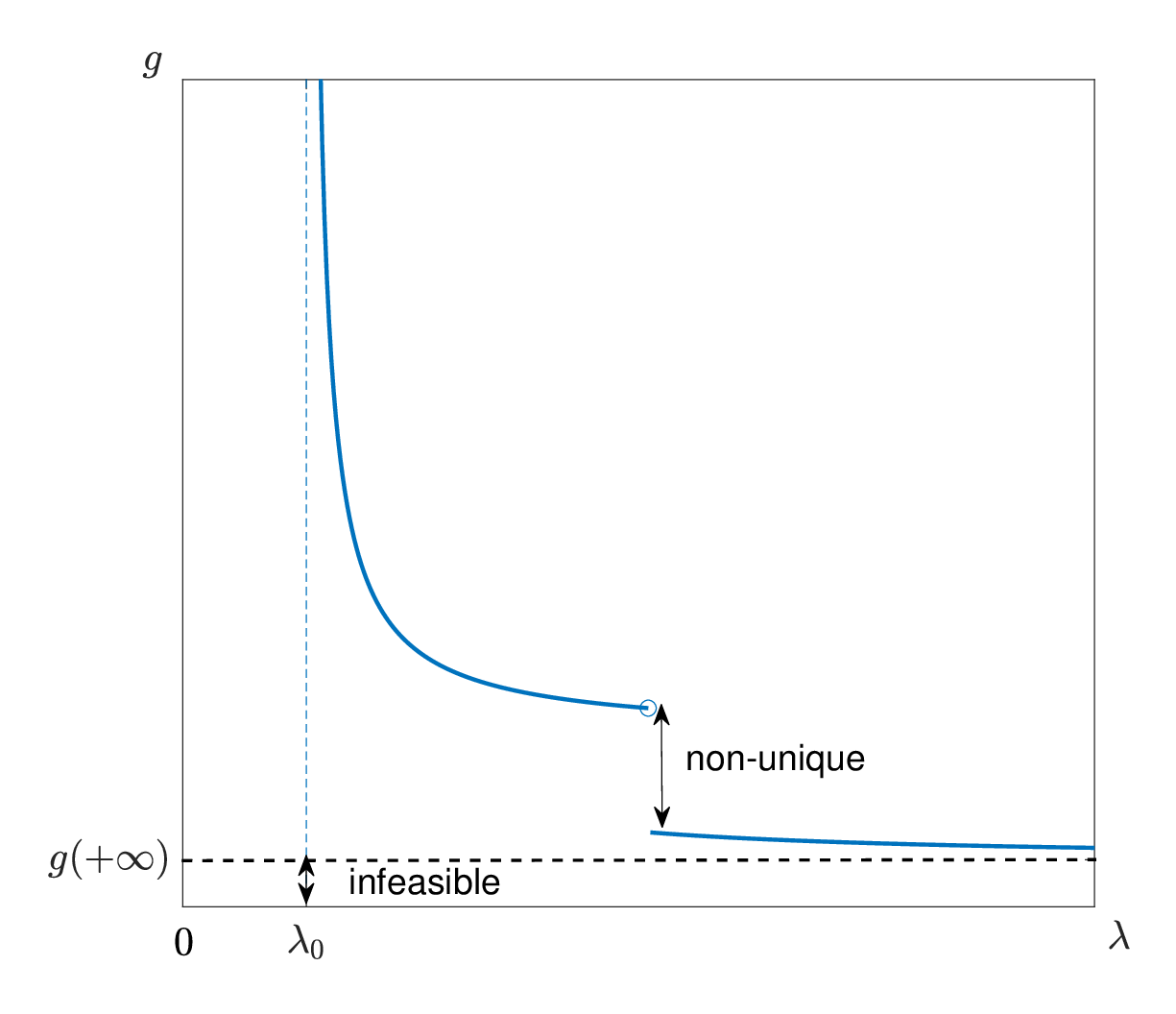}
	}
	\caption{Some examples for $g$. Case 3 is not included because $g$ is not finite. Especially, in Case 1, $g(0)$ can be finite or infinite and there is no bliss region for the latter. In Case 2, the term $\mathbb{E}[\xi\overline{X}_B(\lambda_0\xi)]$ can be finite or infinite, and there is no unattainable region for the latter. Meanwhile, in all three cases, the term $g(+\infty)$ may equal to or less than 0, and then there will be no infeasible region.}
	\label{fig:ex g}
\end{figure}

From Lemma \ref{lem} we know that $\underline{X}_b(\cdot)$ is nonincreasing and right-continuous on $[0,+\infty]$, while $\overline{X}_b(y)$ is nonincreasing and left-continuous. They both have countable discontinuous points. Moreover, $\underline{X}_b(\cdot)$ or $\overline{X}_b(\cdot)$ is discontinuous at $y\in(0,+\infty)$ if and only if its right limit does not equal to its left limit, i.e., $\underline{X}_b(y)\not=\overline{X}_b(y)$, which means that  $\mathcal{X}_b(y)$ is not a singleton.

For $\lambda\in[0,+\infty]$, we take $X^*=\underline{X}_B(\lambda\xi)$ as a candidate of the measurable selection. Define
\begin{equation}\label{eq:g}
	g(\lambda)=\E\[\xi \underline{X}_B \(\lambda\xi\)\].
\end{equation}
As $\underline{X}_B(\lambda\xi)\ge\underline{x}_B$ and $\mathbb{E}[\xi\underline{x}_B]>-\infty$, we know
\begin{equation}\label{eq:g_expect-}
\mathbb{E}\left[\xi\underline{X}_B(\lambda\xi)^-\right]\le\mathbb{E}[\xi\underline{x}_B^-]<+\infty.
\end{equation}
Hence the expectation in \eqref{eq:g} is well-defined, and $g(\lambda)>-\infty$ holds for any $\lambda\in[0,+\infty]$.

Different from the literature, there are two new features of $g$ in our discussion: finiteness and continuity. For the finiteness, there are three possible cases in terms of $g$ (Figure \ref{fig:ex g}):


\begin{enumerate}
	\item[Case 1:] $g(\lambda)<+\infty$ for any $\lambda>0$.
	
	\item[Case 2:] There exists some $\lambda_0>0$ such that
	\begin{equation}\label{eq:case}
		g(\lambda)<+\infty \text{ on } (\lambda_0,+\infty), ~ g(\lambda) = +\infty \text{ on } (0, \lambda_0).
	\end{equation}
	
	\item[Case 3:] $g(\lambda)=+\infty$ for any $\lambda>0$.
\end{enumerate}
{\color{black}
\begin{remark}
	Proposition \ref{suff cond} is also a sufficient condition for Case 1 to hold; see the proof of Proposition \ref{suff cond} for details.
\end{remark}
}
In the literature, Case 1 is always assumed to ensure the existence of the optimal Lagrange multiplier $\lambda^*$ (cf. \citet{KLS1987,KS1999}). From the perspective of Theorem \ref{thm_known}, in the classical univariate problem \eqref{problem0}, we can write $g(\lambda) = \mathbb{E}[\xi (u')^{-1}(\lambda\xi)]$, which is a continuous function if $\xi$ is non-atomic. Therefore, for any $x_0$, one can adopt the intermediate value theorem and find a Lagrange multiplier $\lambda^* > 0$ satisfying $g(\lambda)=x_0$ and solve the problem (cf. \citet{JXZ2008}).

On the infiniteness issue of $g$, Case 2 and Case 3 are investigated in detail in this paper. In Theorem \ref{main}, we give a complete investigation of all three cases. For Case 3, Theorem \ref{main}(1) shows that either Problem (\ref{problem1}) admits at most one feasible solution, or its optimal value equals $+\infty$.  
Case 2 is more complicated, as in this case, the optimal Lagrange multiplier may exist only for some $x_0$, though we are only using $\underline{X}_B(\lambda\xi)$  and $\overline{X}_B(\lambda\xi)$ as a representation of $\mathcal{X}_B(\lambda\xi)$,  Theorem \ref{main} shows that a finite optimal solution does not exist if we cannot find a finite optimal solution using  $\underline{X}_B(\lambda\xi)$  and $\overline{X}_B(\lambda\xi)$.

On the continuity issue of $g$, $g$ may be discontinuous in our model. In fact, based on Lemma \ref{lem}, we know that $\underline{X}_b(y)$ is nonincreasing and right-continuous with respect to $y$. The monotone convergence theorem implies that $g$ is nonincreasing and right-continuous, $g(\lambda_0)=\lim\limits_{\lambda\to\lambda_0+}g(\lambda)$, which can be finite or infinite.
However, $g$ is discontinuous at some $\lambda$ if the set of random variables $\X_B(\lambda \xi)$ is not a singleton, that is, $\underline{X}_B(\lambda\xi) < \overline{X}_B(\lambda\xi)$ happens with a positive probability value.
In this case, to find an optimal solution for $x_0 \in (g(\lambda), g(\lambda-))$, our basic technique is to ``gradually'' change $X^*$ from $\underline{X}_B(\lambda\xi)$ to $\overline{X}_B(\lambda\xi)$. As such, the value $\mathbb{E}[\xi X^*]$ will vary from $g(\lambda)$ to $g(\lambda-)$, and we will obtain an optimal solution. Significantly, we can construct infinitely many optimal solutions in this case. These results are concluded in Theorem \ref{main}. 

Before we propose the main result of this section, we introduce the concept of the feasible set:
\begin{equation*}
	I\triangleq \left\{x_0: \text{Problem}~ \eqref{problem1} ~\text{has a feasible solution for the initial value} ~x_0 \right\}.
\end{equation*}

As we are considering $X$ and $x_0$ in the range of real numbers (that is, not necessarily positive), if $x_0 < \E\left[\xi\underline{x}_B\right]$, the constraint $\E\left[\xi X\right]\le x_0$ leads to $X< \underline{x}_B$ on a positive-measured set, and hence $\E\left[U(X,B)\right]=-\infty$.  Moreover, as we are studying a state-dependent utility $U$, it may happens that $\E\left[U(X,B)\right]=-\infty$ even if $x_0 \ge \E\left[\xi\underline{x}_B\right]$. Therefore, the notation $I$ is necessary. However, in most cases where $U$ is truncated from a CRRA utility, an S-shaped utility, or other utilities, $U$ has a lower bound itself in its domain. That is, if we have $X>\underline{x}_B$ almost surely, then we have $\E\left[U(X,B)\right]>-\infty$, and $I$ trivially becomes $\left[\mathbb{E}[\xi \underline{x}_B],+\infty\right]$ or $\left(\mathbb{E}[\xi \underline{x}_B],+\infty\right]$. To this light, we do not focus on whether the feasible set $I$ is trivial and use the following result to characterize the structure of the  $I$.

\begin{proposition}\label{thm_feasible}
	Suppose that Assumption \ref{ass_standing} holds. Exactly one of the following holds:
	\begin{enumerate}[(i)]
		\item $I=[\mathbb{E}[\xi \underline{x}_B],+\infty)$;
		\item $I=(\tilde{x},+\infty)$ for some $\tilde{x}\ge\mathbb{E}[\xi \underline{x}_B]$, where $\tilde{x}$ {\color{black} is allowed to equal  $+\infty$}.
	\end{enumerate}
	Moreover, if $x_0=\mathbb{E}[\xi \underline{x}_B]$, there is at most one feasible solution and the candidate is $X=\underline{x}_B$. Finally, if $x_0\ge\mathbb{E}[\xi \overline{x}_B]$, Problem \eqref{problem1} admits a bliss solution.
\end{proposition}

In light of Proposition \ref{thm_feasible}, the most complicated and unclear case of Problem \eqref{problem1} is $\mathbb{E}[\xi \underline{x}_B]<x_0<\mathbb{E}[\xi \overline{x}_B]$.
In particular, when $\underline{x}_B\le 0$ almost surely, every $x_0>0$ locates in this region as $\overline{x}_B$ is typically taken to be
$+\infty$. When $\overline{x}_B<+\infty$ and $\mathbb{E}[\xi\overline{x}_B]<+\infty$, it means that $U(\cdot,B)$ is a constant on $[\overline{x}_B,+\infty)$, and it is possible to obtain a bliss solution for large $x_0$. Proposition \ref{thm_feasible} shows that $I$ is a connected interval with the right endpoint being $+\infty$, but we do not know its left endpoint, which depends highly on $U$ and $B$. In the rest of the paper, we will focus mainly on the case that $\mathbb{E}[\xi \underline{x}_B]<x_0<\mathbb{E}[\xi \overline{x}_B]$ with $x_0\in I$.

\begin{remark}\label{rmk_bliss}
	Practically, $\underline{x}_B>-\infty$ means that the manager cannot bear a loss exceeding $-\underline{x}_B$, and then the condition $x_0\ge\E\big[\xi\underline{x}_B\big]$ requires enough initial capital to face the potential risk. Moreover, if $x_0$ is large enough such that $\mathbb{E}[\xi\overline{x}_B]<x_0<+\infty$, then the manager can reach the maximal utility without any risk. Hence the results indicate that people with a high tolerance for loss are easy to find a satisfactory strategy, while people who are easy to be satisfactory or have enough initial capital will obtain a bliss solution.
\end{remark}

Based on the notation $I$, we introduce our results on optimal solutions in all of the Cases 1-3. Noting that $g(\lambda_0)=\lim\limits_{\lambda\to\lambda_0+}g(\lambda)$, which may be finite or infinite. In the following context, $\mathbb{E}\left[\xi\overline{X}_B(\lambda_0\xi)\right]$ is also allowed to be $+\infty$.

\begin{theorem}\label{main}
	Suppose that $(\Omega,\mathcal{F},\mathbb{P})$ is non-atomic and Assumptions \ref{ass_standing}-\ref{asmp:well-defined} hold.
	\begin{enumerate}[(1)]
		\item If Case 3 holds, then Problem (\ref{problem1}) is infinite for any $x_0\in I\backslash\{\mathbb{E}[\xi\underline{x}_B]\}$.
		\item If Case 1 or Case 2 holds, then we have $\lambda_0 \in [0, +\infty)$ in \eqref{eq:case}. Suppose that $x_0\in I$. We have:
		\begin{enumerate}
			\item if there exists a real number $\lambda^*\ge\lambda_0$ such that $g(\lambda^*)=x_0$, then $X^* =\underline{X}_B\big(\lambda^*\xi\big)$ is an optimal solution;
			\item if  $g(\lambda^*)<x_0\leq g(\lambda^*-)$ for some discontinuous point $\lambda^*>\lambda_0$, it has a positive probability that the set $\mathcal{X}_B\big(\lambda^*\xi\big)$ is not a singleton. Moreover, if $x_0=g(\lambda^*-)$, then $X^*=\overline{X}_B(\lambda^*\xi)$ is an optimal solution; if $x_0<g(\lambda^*-)$, there are infinitely many optimal solutions;
			\item[(c1)] (for \textit{Case 1}, where $\lambda_0=0$) if $x_0\ge g(\lambda_0)=g(0)$, then we have a bliss solution;
			\item[(c2)] (for \textit{Case 2}, where $\lambda_0\!\!>\!0$) let $\theta=\mathbb{E}[\xi \overline{X}_B(\lambda_0\xi)]$. If $x_0=\theta$, then $\overline{X}_B(\lambda_0\xi)$ is an optimal solution; if $g(\lambda_0)<x_0 < \theta$, there are infinitely many optimal solutions. if $x_0> \theta$,  Problem \eqref{problem1} has no finite optimal solutions. We have a sequence $\{\hat{X}_k\}_{k\ge1}$ converging almost surely to $\underline{X}_B(\lambda_0\xi)$ with $\E\[\xi\hat{X}_k\]=x_0$ and
			\begin{equation}\label{eq:main}
				\sup_{X: \E\[\xi X\]\le x_0\atop U(X,B)>-\infty}\E\[U(X,B)\]=\lim_{k\to+\infty}\E\[U\(\hat{X}_k,B\)\] {\color{black} = } \E\[U\(\overline{X}_B\(\lambda_0\xi\),B\)\]+\lambda_0(x_0-\theta).
			\end{equation}
		\end{enumerate}
	\end{enumerate}
\end{theorem}

\begin{remark}
	We point out that in Theorem \ref{main}(2)(a), (b) when $x_0=g(\lambda^*)$ and (c2) when $x_0=\mathbb{E}[\xi \overline{X}_B(\lambda_0\xi)]$, the optimal solution is unique if we further require Assumption \ref{ass_finite}. This is also proved in Appendix \ref{proof3}.
\end{remark}

Theorem \ref{main} gives tractable methods to find out optimal solutions of Problem \eqref{problem1} using $g$ and answers the question that Theorem \ref{general thm} leaves. Specifically, Theorem \ref{general thm} shows that a necessary and sufficient condition for $X$ to be an optimal solution is that $X\in\mathcal{X}_B(\lambda\xi)$ for some $\lambda$ satisfying $\mathbb{E}[\xi X]=x_0$. 
Here we need an extra work of measurable selection on $\mathcal{X}_B(\lambda\xi)$, and Theorem \ref{main} proves the existence of such a selection by construction. (a) and (b) deal with the case $x_0 \in \left(g(+\infty), g(\lambda_0)\right]$, while (c1) and (c2) aim at the situation $x_0> g(\lambda_0)$. For the rest case $x_0\le g(+\infty)=\E[\xi \underline{x}_B]$, we have discussed the case in Proposition \ref{thm_feasible}.

The parts (c1) and (c2) of Theorem \ref{main} (when $x_0>g(\lambda_0)$) are first investigated in this paper. Most literature only concentrates on Case 1. In our models, when a benchmark $B$ is involved, the issue of measurable selection arises. Here, our $g$ considers $\underline{X}_B$ as a candidate. We have shown the existence of a measurable selection satisfying the budget constraint. Based on Theorem \ref{main} and Proposition \ref{thm_feasible}, we figure out the (non-)existence and (non-)uniqueness of an optimal solution for all scenarios. 

The result of non-unique solutions is because of the generality of both $U$ and $B$. On the one hand, if $U$ is strictly concave, then $\mathcal{X}_b(y)$ contains always one element, and non-unique optimal solutions described in (b) will not occur. On the other hand, if $B$ is deterministic, then $\mathcal{X}_B(\lambda\xi)$ is not a singleton if and only if $\lambda\xi$ lies in the discontinuous point set of $\overline{X}_B$. Noting that  $\overline{X}_B$ is decreasing, the set is countable. As such, the probability that the random set $\mathcal{X}_B\big(\lambda^*\xi\big)$ is not a singleton equals zero when the pricing kernel $\xi$ is non-atomic, which means that the non-unique optimal solutions will not happen either. Therefore, it is only a special case of Problem \eqref{problem1}.

\begin{table}
\caption{Solutions of Problem \eqref{problem1} when Assumptions \ref{ass_standing}-\ref{ass_finite} hold and $\inf I=\E[\xi \underline{x}_B]$. Note: Denote by $\lambda^* \in (\lambda_0, +\infty)$ a discontinuous point of $g$.}
	\centering
	\begin{tabular}{|c|c|l|c|}
		\hline
		& Case 1  & \multicolumn{1}{c|}{Case 2}                                                   & Case 3                                                                         \\
		\hline
		$x_0 < \E[\xi \underline{x}_B] = g(+\infty)$  & \multicolumn{3}{c|}{infeasible (Proposition \ref{thm_feasible})}                                                                                                                                                                                                       \\
		\hline
		$x_0 = \E[\xi \underline{x}_B] = g(+\infty)$  & \multicolumn{3}{c|}{at most one feasible solution $\underline{x}_B$ (Proposition \ref{thm_feasible}) }                                                                                                                                                                 \\
		\hline
		$x_0 \in g([\lambda_0, +\infty))$             & \multicolumn{2}{c|}{unique (Thm \ref{main}(2)(a))}                                                                                                                       & \multirow{6}{*}{\begin{tabular}[c]{@{}c@{}}infinite\\(Thm \ref{main}(1)) \end{tabular}}  \\
		\cline{1-3}
		$x_0 \in (g(\lambda^*), g(\lambda^*-))$       & \multicolumn{2}{c|}{non-unique (Thm \ref{main}(2)(b))}                                                                                                              &                                                                                \\
		\cline{1-3}
		$x_0 = g(\lambda^*-)$                         & \multicolumn{2}{c|}{unique   (Thm \ref{main}(2)(b))}                                                                                                                     &                                                                                \\
		\cline{1-3}
		\multirow{3}{*}{$x_0 >g(\lambda_0)$ }     & \multirow{3}{*}{\begin{tabular}[c]{@{}c@{}}bliss\\(Thm \ref{main}(2)(c1)) \end{tabular}} & $x_0>\mathbb{E}[\xi\overline{X}_B(\lambda_0\xi)]$: unattainable (Thm \ref{main}(2)(c2))  &                                                                                \\
		\cline{3-3}
		&                                                                             & $x_0=\mathbb{E}[\xi\overline{X}_B(\lambda_0\xi)]$: unique (Thm \ref{main}(2)(c2))           &                                                                                \\
		\cline{3-3}
		&                                                                             & $x_0<\mathbb{E}[\xi\overline{X}_B(\lambda_0\xi)]$: non-unique (Thm \ref{main}(2)(c2))     &                                                                                \\
		\hline
	\end{tabular}
	
	\label{table1}
\end{table}

\begin{remark}\label{rmk_no monotone}
	As $B$ can be any random variable, the optimal solution $\underline{X}_B(\lambda^*\xi)$ may not be a function of $\xi$. Moreover, as the function $\underline{X}_b(y)$ performs no obvious monotonicity in $b$, $\underline{X}_B(\lambda^*\xi)$ may not be a decreasing function of $\xi$ even if $B$ is a function of $\xi$, {\color{black} and it may not be comonotonic with $\xi$}; see \cite{BMRV2015} for a concrete model. {\color{black} This phenomenon also appears in the literature using quantile formulation. In \cite{JZ2008}, \cite{HZ2011}, and \cite{X2016}, the optimal solution is a decreasing function of the pricing kernel, while in \cite{PWX2023} where a CPT reference and a stochastic benchmark are adopted, the optimal solution may not be a decreasing function of the pricing kernel (though it is still comonotonic with the pricing kernel).}
\end{remark}

For the finiteness and attainability, as our results in Theorem \ref{main} (2)(a)(b)(c1) (and (c2) when $x_0\le\theta$) are tractable, Problem \eqref{problem1} is attainable in all these situations, and one can verify the finiteness directly. For the remaining case (c2) with $x_0>\theta$, Theorem \ref{main} indicates that $\theta$ is the largest initial value for which we can find an optimal solution through $g$ (noting that $\theta$ can be $+\infty$). For this initial value $x_0$, we have an optimal solution $\overline{X}_B(\lambda_0\xi)$, and we can verify the finiteness of Problem \eqref{problem1}. We have two cases:
\begin{itemize}
	\item If Problem \eqref{problem1} is finite for $\theta$, then \eqref{eq:main} shows that for the initial value $x_0$, Problem \eqref{problem1} is also finite. Moreover, Theorem \ref{main} asserts that there is no optimal solution, i.e., the problem is unattainable for $x_0$.
	\item If Problem \eqref{problem1} is infinite for $\theta$, then as $x_0>\theta$ and $U(x,b)$ is nondecreasing in $x$, we can simply turn up the infinite optimal solution $\overline{X}_B(\lambda_0\xi)$ for Problem \eqref{problem1} with initial value $\theta$ to obtain an infinite optimal solution with initial value $x_0$.
\end{itemize}
In a word, we can use the function $g$ and Theorem \ref{main} to completely figure out the existence and expressions of optimal solutions, finiteness and attainability of Problem \eqref{problem1}, while Theorem \ref{general thm} gives support on the uniqueness of the optimal solution.

{\color{black}
To end this section, we propose an example where there may be infinitely many optimal solutions. We provide more detailed characterizations of these solutions and select a specific optimal solution by investigating the liquidation probability.
\begin{example}[Infinitely many solutions and further selection]
Let $\{W_t\}_{0\le t\le T}$ be a standard Brownian motion. Let $S_t=\exp\left\{\(\mu -  \frac{1}{2}\sigma^2\)t + \sigma W_t\right\}$ be the risky asset and $F_t=e^{rt}$ be the risk-free asset with $\mu>r>0$ and $\sigma>0$. Then
$\xi = \exp\left[-\(r+  \frac{1}{2}\theta^2\)T - \theta W_T\right]$ follows a log-normal distribution with $\theta=\frac{\mu-r}{\sigma^2}$. For the benchmark $B$, we follow \cite{BPS2007} to set a general value-weighted benchmark portfolio process:
\begin{equation}\label{benchmark}
	\left\{
	\begin{aligned}
		& \d B_t = \alpha r B_t \d t + \beta B_t \(\frac{\d S_t}{S_t}\), \\
		& B_0 = bx_0,
	\end{aligned}
	\right.
\end{equation}
and $B=B_T$, where $\alpha+\beta=1,~b\ge0$ are fixed constant parameters. Define the utility as  
	\begin{equation}\label{utility}
		U(x,b) =\left\{
		\begin{aligned}
			&(x-b)^{p}, &&  x > b,\\
			&-k(b-x)^{p}, && 0 \le x \leq b.\\
		\end{aligned}
		\right.
	\end{equation}
	where $k>1, p\in (0,1)$ are constants.
Under this setting, we have 
\begin{equation*}
	B=bx_0e^{\(\alpha r+\beta \mu -\frac{\beta^2\sigma^2}{2}\)T+\beta\sigma W_T}=L\xi^{-\frac{\beta \sigma}{\theta}},
\end{equation*}
where $L$ is a constant depending on the market parameters. For the S-shaped utility, we have
\begin{equation*}\label{conjugate point 2}
	\begin{aligned}
		\mathcal{X}_b(y)
		&=\left  \{
		\begin{aligned}
			&\{0\}, &&  b-dI(y) > 0,\\
			&\{0,b+I(y)\}, && b-dI(y)=0,\\
			&\{b+I(y)\}, && b-dI(y) < 0,\\
		\end{aligned}
		\right.
	\end{aligned}
\end{equation*}
where $I(y)=\(\frac{y}{p}\)^{\frac{1}{p-1}}$ and $d>0$ is the unique solution of the tangent equation $1+k d^p=p(d+1)$. Therefore, $\mathcal{X}_B(\lambda\xi)$ has more than one element if and only if $B=dI(\lambda\xi)$, that is,
$$
L\xi^{-\frac{\beta \sigma}{\theta}}=d\(\frac{\lambda\xi}{p}\)^{\frac{1}{p-1}}.
$$
When $\frac{\beta \sigma}{\theta}=\frac{1}{1-p}$ and $\lambda_0=p\(\frac{L}{d}\)^{p-1}$, we have $B=dI(\lambda_0\xi)$ holds almost everywhere, and $\mathcal{X}_B(\lambda_0\xi)=\{0,\frac{d+1}{d}B\}$ is a binary set. Using Theorem \ref{main}, this $\lambda_0$ is a discontinuous point of $g$, and we have $g(\lambda_0+)=0$, $g(\lambda_0-)=\frac{d+1}{d}\E\left[\xi B\right]=\frac{d+1}{d}L\E\left[\xi^{\frac{p}{p-1}}\right]\triangleq C$. For $0<x_0<C$, we have infinitely many optimal solutions. Indeed, for every $A\in\mathcal{F}$, define $X^A=\frac{d+1}{d}B\id_A\in\mathcal{X}_B(\lambda_0\xi)$, if $\E\left[\xi X^A\right]=\frac{d+1}{d}L\E\left[\xi^{\frac{p}{p-1}}\id_A\right]=x_0$, then Theorem \ref{general thm} indicates that $X^A$ is an optimal solution.

Among all these solutions, we can find one with the smallest liquidation probability $\mathbb{P}\left[X^A=0\right]$ (=$P(A)$). If we fix $P(A)=\epsilon>0$, then $\E\left[\xi^{\frac{p}{p-1}}\id_A\right]$ attains its minimum when $A$ has the form 
$A=\{\omega:W_T<\sqrt{T}\Phi^{-1}(\epsilon)\}$, where $\Phi$ is the cumulative distribution function of the standard normal distribution. This is because $\xi$ is decreasing with $W_T$. The minimum value is
$m(\epsilon)=\frac{d+1}{d}Le^{\frac{p}{1-p}(r+\frac{\theta^2}{2(1-p)})T}\Phi\left(\Phi^{-1}(\epsilon)-\frac{p\theta}{1-p}\sqrt{T}\right).$
Similarly, we can prove that the maximum value of $\E\left[\xi^{\frac{p}{p-1}}\id_A\right]$ is
$M(\epsilon)=\frac{d+1}{d}Le^{\frac{p}{1-p}(r+\frac{\theta^2}{2(1-p)})T}\Phi\left(\Phi^{-1}(\epsilon)+\frac{p\theta}{1-p}\sqrt{T}\right).$
Letting $m(\epsilon)\le x_0\le M(\epsilon)$, we can solve the range of the liquidation probability $\epsilon$ from $x_0$. Moreover, we conclude that when $X^A$ attains the minimal liquidation probability $\epsilon_m$, its liquidation happens only for large $W_T>\sqrt{T}\Phi^{-1}(\epsilon_m)$, which means that the market is bad. While $X^A$ attains the maximal liquidation probability $\epsilon_M$, its liquidation happens only for small $W_T<\sqrt{T}\Phi^{-1}(\epsilon_M)$, which means that the market is good. 
\end{example}
}


\vskip 10pt
\section{
	Well-definedness and finiteness}\label{section_condition}

	In this section, we give some further results on the well-definedness and finiteness (we have already given a sufficient condition in Proposition \ref{suff cond}). Based on \eqref{eq:utineq}-\eqref{eq:ut_estimate2} in the proof of Proposition \ref{suff cond}, we can also verify the finiteness of Problem \eqref{problem1} using $g$ and $\underline{X}_B$. We have:

%

\begin{proposition}\label{prop:finite}
	Suppose  that Assumption \ref{ass_standing} holds. If there exists $\lambda\in[0,+\infty)$ such that $g(\lambda)$ is finite, and $\mathbb{E}\left[U\left(\underline{X}_B(\lambda\xi),B\right)\right]$ is well-defined and finite,   then Problem \eqref{problem1} is well-defined and finite for all $x_0\in\R$.
	
\end{proposition}
In light of Proposition \ref{prop:finite}, we can also study the finiteness of Problem \eqref{problem1} using $g$ and
\begin{equation}\label{eq:J}
	J(\lambda)\triangleq\mathbb{E}\left[U\left(\underline{X}_B(\lambda\xi),B\right)\right],~\lambda\in(0,+\infty).
\end{equation}
\begin{assumption}[Well-definedness of $J$]\label{asmp:h}
	The expectation in \eqref{eq:J} is well-defined for any $\lambda \in (0,+\infty)$ and $J(\lambda) > -\infty$ for all $\lambda\in(0,+\infty)$.
\end{assumption}
Under Assumption \ref{asmp:h}, similarly as $\lambda_0$ in \eqref{eq:case}, we define $\lambda_1\in[0,+\infty]$ from
\begin{equation*}
	J(\lambda)<+\infty \text{ for } \lambda\in(\lambda_1,+\infty),~J(\lambda)=+\infty \text{ for }\lambda\in(0,\lambda_1).
\end{equation*}

We have the following results on the value function of Problem \eqref{problem1}:
\begin{theorem}\label{prop:suff2}
	Suppose that Assumptions \ref{ass_standing}-\ref{asmp:well-defined} and \ref{asmp:h} hold and $x_0\in I\backslash\{\mathbb{E}[\xi\underline{x}_B]\}$, then
	\begin{enumerate}[(i)]
		\item If $\lambda_0=+\infty$, then Problem \eqref{problem1} is infinite;
		\item If $\lambda_0<+\infty$, but $\lambda_1=+\infty$, then Problem \eqref{problem1} is infinite;
		\item If $\lambda_0<+\infty$, and $\lambda_1<+\infty$, then Problem \eqref{problem1} is finite, and we have $\lambda_1=\lambda_0$.
	\end{enumerate}
\end{theorem}
\begin{remark}
	Theorem \ref{prop:suff2} indicates that under Assumption \ref{asmp:h}, if Problem \eqref{problem1} is finite (or infinite) for one $x_0$ in the feasible set, then it is finite (or infinite) for all $x_0$ in the feasible set. In this light, we can first compute $g$ and $J$ to verify the finiteness of Problem \eqref{problem1}, and then we use Theorem \ref{main} to find a finite solution or an infinite solution.
\end{remark}

\section{Connection to the univariate framework}\label{section_univ}

In this section, we connect our results to the univariate framework of general utilities, while we assume that $u$ satisfies good conditions (differentiable and strictly concave) in Problem \eqref{problem0}. The univariate utility is state-independent and the univariate framework depends only on the distribution of the wealth $X$, and hence the objective is distribution-invariant. Fix $u$ satisfying Assumption \ref{ass_standing} (we can regard $u$ as a state-dependent utility $U(x,b)$ with constant state). For $y \ge 0$, define the conjugate set function $\mathcal{X}$  as
$$
\begin{aligned}
	&\X(y)=\mathop{\arg \sup}\limits_{x\in\mathbb{R}\cup\{\pm\infty\}}{\left[u(x)-yx\right]}.
\end{aligned}
$$
Define
$
\underline{X}(y)=\inf\mathcal{X}(y),~\overline{X}(y)=\sup\mathcal{X}(y)
$
as well as $g(\lambda)=\E\[\xi\underline{X}(\lambda\xi)\]$, and $\lambda_0$ as in \eqref{eq:case}.

Here the slope of $u$ near $+\infty$ can be positive.  For simplicity, we assume that $\xi$ is integrable, then one can verify that the feasible set $I=\[\underline{x}\E\[\xi\],+\infty\)$ or $\(\underline{x}\E\[\xi\],+\infty\)$, which depends on the type of $\underline{x}$. Moreover, Assumption \ref{asmp:well-defined} is also easy to verify.

Using the results in Sections \ref{section_general}-\ref{section_condition}, we propose a complete result for the univariate utility $u$ in Theorem \ref{thm_known} and the following discussion.


\begin{theorem}\label{thm_known}
	Suppose that $(\Omega, \F, \p)$ is non-atomic and that the univariate version of Assumptions \ref{ass_standing} and \ref{ass_finite} hold.
	
	(1) $X^*$ is an optimal solution of Problem \eqref{problem0} if and only if there exists $\lambda^* \ge 0$ such that $X^* \in \mathcal{X}(\lambda^* \xi)$ and $\E\[\xi X^*\]=x_0$.
	
	(2) 
	The optimal solution is unique if the set $\X(y)$ is singleton for any $y \geq 0$ or $\xi$ is non-atomic. The set of optimal solutions $\X(\lambda^* \xi)$ may be non-unique if $u$ is non-concave and $\xi$ is atomic.
\end{theorem}

Theorem \ref{thm_known} is a complete result for general (discontinuous and non-concave) utilities, which extends the univariate results in literature; see \citet{JXZ2008} and \citet{R2013}. Under our framework, Theorem \ref{thm_known} becomes a direct corollary of the multivariate version Theorems \ref{thm_concave}-\ref{general thm}: to prove Theorem \ref{thm_known}, one can simply regard $B$ as a constant in Theorems \ref{thm_concave}-\ref{general thm} (removing all the $``B"$ appeared in the proof). Thus, our framework also serves to offer proof for the univariate result. 
Apart from the uniqueness, for $x_0\in\R$ we also have:
\begin{enumerate}[(i)]
	\item When $\lambda_0=+\infty$, the problem is infinite for every $x_0\in\(\underline{x}\E\[\xi\],+\infty\)$. For $x_0<\underline{x}\E\[\xi\]$, there is no feasible solutions. While for $x_0=\underline{x}\E\[\xi\]$, there is at most one feasible solution, which depends on the type of $\underline{x}$.
	\item When  $\lambda_0\in(0,+\infty)$, the	optimal solution(s) exists for all $x_0\in\(\underline{x}\E\[\xi\],\E\[\xi\overline{X}(\lambda_0\xi)\]\]$, and one can directly verify the finiteness or infiniteness of the solution. For $x_0>\E\[\xi\overline{X}(\lambda_0\xi)\]\triangleq\theta$, we have two cases:
	\begin{itemize}
		\item If the problem with initial value $\theta$ is finite, the problem for $x_0$ is finite and unattainable, and we can find a sequence $\{\hat{X}_k\}$ converges to $\overline{X}(\lambda_0\xi)$ almost surely satisfying $\E\[\xi\hat{X}_k\]=x_0$, $u(\hat{X}_k)>-\infty$ and
		\begin{equation}\label{eq:univarite_ineq}
			\sup\limits_{X: \mathbb{E}[\xi X]\le x_0\atop u(X)>-\infty}\mathbb{E}[u(X)]=\lim_{k\to+\infty}\E\[u(\hat{X}_k)\]=\E\[u\(\overline{X}(\lambda_0\xi)\)\]+\lambda_0(x_0-\theta).
		\end{equation}
		\item If the problem with initial value $\theta$ is infinite, then it is also infinite with initial value $x_0$, and an optimal solution $X^*$ can be obtained by letting the optimal solution $\overline{X}(\lambda_0\xi)$ for initial value $\theta$ satisfy $\E\[\xi X^*\]=x_0$.
	\end{itemize}
	\item When $\lambda_0=0$, then for each $x_0>\underline{x}\E\[\xi\]$ there exists an optimal solution, and the finiteness or infiniteness can be verified directly.
\end{enumerate}

In \citet{R2013}, it is claimed that there exists $\tilde{x}\in(0,+\infty]$ such that for $x_0\in(0,\tilde{x})$, Problem \eqref{problem0} admits an optimal solution. While in Theorem \ref{thm_known}, $\tilde{x}$ can be indeed determined as $\tilde{x}=\mathbb{E}\left[\xi\overline{X}(\lambda_0\xi)\right]$, and the case when $x_0\ge\tilde{x}$ is also supplemented.
The result of a differentiable and strictly concave function $u$ exists in \citet{JXZ2008} and \citet{KS1998}; in this case, we have $\X = (u')^{-1}$ and $X^* = (u')^{-1}(\lambda^* \xi)$.

A key insight of Theorem \ref{thm_known} is that there may also exist non-unique solutions in the univariate framework. It may only happen if $u$ is non-concave and $\xi$ is atomic. In this case, the random set $\mathcal{X}(\lambda^* \xi)$ is not a singleton and $g(\lambda) := \E[\xi \X(\lambda \xi)]$ is discontinuous at the Lagrange multiplier $\lambda^*$. The optimal solution is unique if $\xi$ is non-atomic. However, in the multivariate framework, even if $\xi$ is non-atomic, there may exist non-unique solutions. It is because of the state-dependent utility $U$ and the stochastic benchmark $B$. They lead to a non-singleton random set $\mathcal{X}(\lambda^* \xi)$.

Traditionally, the optimal solution for the affine utility is known to be $X^* = +\infty$ if the pricing kernel $\xi$ is non-atomic. We finally propose Example \ref{ex:affine}, showing that for the affine utility, {\color{black} $\xi$ must be atomic for the optimal solution to exist.} We claim that the affine utility, although not satisfying Assumption \ref{ass_standing}, satisfies Assumption \ref{ass_G} in the Appendix \ref{app_general}, where we show that Theorems \ref{general thm}-\ref{main} also hold under the weaker but more complex Assumption \ref{ass_G}.
\begin{example}[Affine utility]\label{ex:affine}
	Fix $k \in (0, +\infty)$ and $L \in \R$. Let
	$
	u(x)= k x, ~ x \geq L.
	$
	For any $x_0 > L \E[\xi]$, we find that the optimal solution exists if and only if $\xi_{min} \triangleq \essinf \xi \in (0, +\infty)$ and $\xi$ has an atom at $\xi_{min}$. Indeed,
	if the optimal solution exists, it is given by $X^* \in \X(\lambda^* \xi)$, where $\lambda^* \in (0, +\infty)$ is a constant (to be determined) and
	$$
	\X(y) = \left\{
	\begin{aligned}
		& L, && y > k;\\
		& [L, +\infty], && y = k;\\
		& +\infty, && 0 \leq y < k.
	\end{aligned}
	\right.
	$$
	To satisfy $\E[\xi X^*] = x_0$, we obtain that $\lambda^* = k/\xi_{min}$ and the optimal solution is
	$$
	X^* = \left\{
	\begin{aligned}
		& L, && \xi > \xi_{min};\\
		& \frac{x_0 - L \E\[\xi \id_{\{\xi > \xi_{min}\}}\]}{\xi_{min} \p[\xi = \xi_{min}]}, && \xi = \xi_{min}.
	\end{aligned}
	\right.
	$$
\end{example}
{\color{black} This result indicates that the optimal terminal wealth $X^*$ will gamble on the atomic $\xi=\xi_{min}$ and keep the lowest level $L$ for other cases.}
\section{Application}\label{section_ex2}
In this section, we formulate a constrained utility optimization problem with state-dependent benchmarks:
\begin{equation}\label{CVaR prob}
	\begin{split}
		&\sup_{X\ge0}\mathbb{E} \left[U\left(X,B_1\right)\right]\\
		&\text{subject to}~\mathbb{E}[\xi X]\le x_0,~\mathbb{P}[X\ge B_2]\ge1-\alpha,\\
	\end{split}
\end{equation}
where $U$ is the S-shaped utility function defined in Example \ref{eg2},
$B_1$ is a performance benchmark (reference level) and $B_2$ is a risk benchmark. $\xi$ is a log-normal pricing kernel with the form
$
\xi = \exp(-(r+\frac{1}{2}\theta^2)T-\theta \sqrt{T}W),
$
where $r$ is the risk-free rate of interest, $\theta$ is the risk premium, and $W$ follows a standard normal distribution.

In the literature of risk management with VaR constraints (cf. \citet{NS2020}), $B_2$ is usually taken as a constant $L$, which means the worst level of return under the given confidence level. However, when the market state is not bad, it is reasonable to require a higher level of return. If we raise $L$ only in the situation when the market does not perform poorly, the risk of the return will not become larger. That is, we allow $B_2$ (and also $B_1$) to be stochastic.

In the next subsection, we propose a general solution to Problem \eqref{CVaR prob}. In consideration of the numerical results, we will only consider three simple plans as follows (where $L_i$ and $w$ are constants):

\textbf{Plan I}: $B_1=L_1$, $B_2=L_3$.

\textbf{Plan II}: $B_1=L_1$, $B_2=L_3+(L_4-L_3)\id_{\{W>w\}}$. Note that $B_2$ is state-dependent.

\textbf{Plan III}: $B_1=L_1+(L_2-L_1)\id_{\{W>w\}}$, $B_2=L_3+(L_4-L_3)\id_{\{W>w\}}$. Note that both $B_1$ and $B_2$ are state-dependent.

Plan I is the most classical case. In Plan II, we adjust the level of return in the VaR constraint from $L_3$ to $L_4$ when the market is not bad, while in Plan III, the performance benchmark $B_1$ will also be raised from $L_1$ to $L_2$. For simplicity, we also assume $B_1>B_2$.
To solve Problem \eqref{CVaR prob}, we apply the Lagrange method to convert it to Problem \eqref{problem1}. In literature, the quantile formulation method is adopted to solve an optimization problem with risk management, but the validity of this method requires the objective function to be distribution-invariant, i.e., the values of the objective function are the same for identically distributed random variables $X$. As the objective function involves stochastic benchmarks $B_1$ and $B_2$ in Problem \eqref{CVaR prob}, the quantile formulation method does not work here.
\subsection{Theoretical results: state-dependent utility optimization}
For $\mu\ge0$, define the modified utility function
\begin{equation*}
	U^{\mu}(x,(b_1,b_2))=
	\left\{\begin{array}{ll}
		U(x,b_1)+\mu\id_{\{x\ge b_2\}},&x\ge0,\\
		-\infty,&x<0.
	\end{array}\right.
\end{equation*}

Denote $b=(b_1,b_2)$ and $B=(B_1,B_2)$. Consider the converted problem without the risk constraint:
\begin{equation}\label{remod prob}
	\begin{split}
		&\sup_{X\ge0}\mathbb{E} \left[U^{\mu}\left(X,B\right)\right]\\
		&\text{subject to}~\mathbb{E}[\xi X]\le x_0.\\
	\end{split}
\end{equation}
Under Plans I-III, we have the following result.
\begin{theorem}\label{thm ex}
	For every $x_0>0$, Problem \eqref{remod prob} admits a unique finite optimal solution $X^{\mu}=\underline{X}^{\mu}_B(\lambda(\mu)\xi)$ with $\lambda(\mu)$ as a function of $\mu$. When $b_1>b_2$, the function $\underline{X}_b^{\mu}(y)$ is given as follows:
	\begin{equation*}
		\begin{aligned}
			&\textcircled{1}.~~~~\text{if}~y_1(b)\le y_2^\mu(b),~\text{then}&&\textcircled{2}.~~~~\text{if}~y_1(b)\ge y_2^\mu(b),~\text{then}\\
			&\underline{X}_b^{\mu}(y)=\left\{
			\begin{array}{ll}
				b_1+\(\frac{p}{y}\)^{\frac{1}{1-p}},&y<y_1(b),\\
				b_2,&y_1\le y<y_2^\mu(b),\\
				0,&y\ge y_2^\mu(b),
			\end{array}
			\right.
			&&\underline{X}_b^{\mu}(y)=\left\{
			\begin{array}{ll}
				b_1+\(\frac{p}{y}\)^{\frac{1}{1-p}},&y<y_3^\mu(b),\\
				0,&y\ge y_3^\mu(b),
			\end{array}
			\right.
		\end{aligned}
	\end{equation*}
	where $y_1(b)$, $y_2^\mu(b)$ and $~y_3^\mu(b)$ are defined by
	\begin{equation*}\footnotesize
		y_1(b)=p\(\frac{d(k)}{b_1-b_2}\)^{1-p},~
		y_2^\mu(b)=\frac{1}{b_2}\(\mu+k b_1^p - k (b_1-b_2)^p\),~
		y_3^\mu(b)=p\(\frac{d(k+\mu b_1^{-p})}{b_1}\)^{1-p},
	\end{equation*}
	and $d(s)$ denotes the solution of the equation $1+sx^p = p(x+1)$.
\end{theorem}

Based on Theorem \ref{thm ex}, we  solve the equation $\mathbb{P}[X^{\mu}\ge B_2]=1-\alpha$ to determine the Lagrange multiplier $\mu^*$, and then $X^*:=X^{\mu^*}$ gives a finite optimal solution of Problem \eqref{CVaR prob}. 

\begin{remark}
	The existence of $\mu^*$ is not a trivial issue, and for some initial value $x_0$ there may be no such $\mu^*$. But this is not the key point of this paper; see e.g. \citet{W2018} for details.
\end{remark}

\subsection{Numerical results}
The following figure shows the numerical result of the relation between $X^{*}$ and $\xi$ in Plans I, II and III respectively. The parameter is taken as $r=0.03$, $\theta=0.3$, $T=10$, $p=0.5$, $k=2.25$, $x_0=30$, $L_1=60$, $L_2=70$, $L_3=40$, $L_4=50$, $w=-1$, $\alpha=0.05$. The top axis shows the cumulative probability of $\xi$ from left to right.
\begin{figure}[htbp] 
	\centering
	\includegraphics[width = 0.5\textwidth]{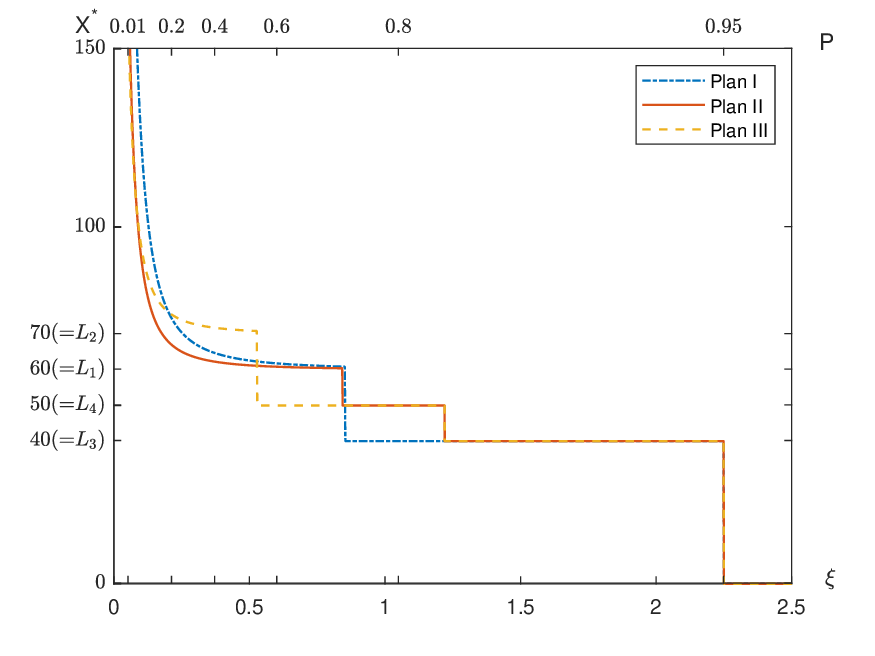}
	\caption{relationship between $X^*$ and $\xi$.}
	\label{fig:numerical}
\end{figure}

We know from Figure \ref{fig:numerical} that, if we adjust the return level higher in the VaR when the market is not bad, that is, we change from Plan I to Plan II, then the return performs better when $\xi\in[0.85,1.22]$ with a stable increment from $L_3 (=40)$ to $L_4 (=50)$, and the probability is about 10.8\%. 
While for $\xi<0.85$, the return of Plan I is higher than that of Plan II. That is, when the market performs well, Plan I gives a higher return than Plan II, and the gap will increase rapidly from $0$ as $\xi$ decreases. Moreover, for a small probability ($<1$\%, when the market performs quite well), the relative gap between Plan I and Plan II reaches more than $50\%$. In conclusion, compared to Plan I, Plan II (different $B_2$) increases the return when the market is not too bad by reducing the return when the market is quite good, which suits bearish managers or people with a high risk aversion.

In Plan III, the manager has higher anticipation than in Plan II when the market does not perform poorly. The result shows that his return becomes higher than Plan II when $\xi<0.53$ with a probability of about $54.8\%$, and the gap decays quickly to $0$ when $\xi$ decreases (which means that the market state is better). While for $\xi\in[0.53,0.84]$, Plan II gives a higher return than Plan III, and the probability is about 18.3\%. Therefore, compared to Plan II, Plan III (different $B_1$) increases the return when the market is relatively good by reducing the return when the market is relatively bad, which suits bullish managers or people with a low risk aversion.
 As Plan I cares least about the risk among the three plans, its return for $\xi<0.2$ (when the market performs quite well) is the highest, and the probability is about 18.42\%.

\begin{remark}
	In Plans II and III, we have $\mathbb{P}[X^*\ge L_3]=0.95$, which is the first stage of the risk value $B_2$. For the second stage, we have $\mathbb{P}[X^*\ge L_4]=0.84$, and events of reaching the second stage are equivalent to the event that $\xi<1.22$. As a possible extension, we consider a risk variable with multiple stages such as:
	\begin{equation*}
		B_2=f(\xi)=L_1\id_{(\xi_1,+\infty)}(\xi)+\sum_{k=2}^{n-1}L_{k}\id_{(\xi_{k},\xi_{k-1}]}(\xi)+L_n\id_{(-\infty,\xi_n]}(\xi),
	\end{equation*}
	which may become an alternative for the model with multiple risk constraints as
	$$\mathbb{P}[X\ge L_i]\ge 1-\alpha_i, ~i=1,2,...,n.$$
\end{remark}
\begin{remark}
	Apart from Plans I-III, if we take $B_1-B_2=d(k)\big(\frac{\xi}{p}\big)^{\frac{1}{p-1}}$, then $y_1(B)=\xi$, and $g(\lambda)$ may be discontinuous at $\lambda=1$. In this case, Theorem \ref{main} shows that there may exist infinitely many optimal solutions.
\end{remark}

\section{Concluding remarks}\label{section_conclusion}
We propose a framework of state-dependent utility optimization with general benchmarks. We give a detailed and complete discussion on the feasibility, finiteness, and attainability. 
We find that: (i) the optimal solutions may be a random set,  which possibly consists of infinitely many optimal solutions; (ii) the Lagrange multiplier may not exist because the function $g$ in \eqref{eq:g} is discontinuous or infinite at some $\lambda^*$; (iii) the measurability issue may arise when applying the concavification to a multivariate utility function 
and when selecting a candidate from the non-unique optimal solutions. We address these technical issues, especially for measurability, and we do not assume a priori that the optimal Lagrange multiplier exists. 
 In light of (i) and (iii), it is of interest to study how to  
 further select the best one from non-unique optimal solutions in future research.

Finally, we stress that the framework does not include probability distortion. When the reference level is deterministic in cumulative prospect theory, Problem \eqref{problem0} can be further modeled with a probability distortion on $X$, which can be solved by the quantile formulation approach. As the benchmark $B$ may be stochastic, the objective function is no longer distribution-invariant and the quantile formulation approach does not work for this framework in general. 

\section*{Acknowledgments}
Y. Liu acknowledges financial support from the research startup fund at The Chinese University of Hong Kong, Shenzhen. 
 The authors acknowledge support from the National Natural Science Foundation of China (Grant Nos. 12271290, 11871036). The authors are grateful to the group members of Mathematical Finance and Actuarial Science at the Department of Mathematical Sciences, Tsinghua University for useful feedback and useful conversations. 




\appendix

\section{Proofs in Section \ref{section_model}}

\begin{proof}[Proof of Proposition \ref{suff cond}.]
	Denote $\gamma(b)=U(\underline{x}_b+\theta(b),b)>-\infty$. We prove
	\begin{equation}\label{ineqA0}
		\overline{X}_b(y)\le\big(1+\frac{K_1(b)}{y}\big)^{\frac{1}{1-\delta}}+K_2(b),
	\end{equation}
	where $K_1(b)=2(u_1(b)+u_2(b)+|\gamma(b)|),K_2(b)=K(b)+2\theta(b)+2|\underline{x}_b|$.
	\vskip 5pt
	If $x\triangleq \overline{X}_b(y)>\big(1+\frac{K_1(b)}{y}\big)^{\frac{1}{1-\delta}}+K_2(b)$, then, based on the definition of $\overline{X}_b(y)$, we know that
	$
	U(x',b)-yx'\le U(x,b)-yx
	$
	holds for any $x'>\underline{x}_b$.
	Letting $x'=\frac{x}{2}$, we have
	\begin{equation}\label{ineqA1}
		x\le \frac{2}{y}\left[U\left(x,b\right)-U\left(\frac{x}{2},b\right)\right].
	\end{equation}
	As $x>K_2(b)$, we know $\frac{x}{2}>\underline{x}_b+\theta(b)$ and $U(\frac{x}{2},b)\ge\gamma(b)$. In addition, as $x>K_2(b)>K(b)$, we also have
	$U(x,b)\le u_1(b)+u_2(b)x^\delta$, and then (\ref{ineqA1}) leads to
	$
	x\le \frac{2}{y}\big[u_1(b)+u_2(b)x^\delta-\gamma(b)\big],
	$
	or
	\begin{equation}\label{ineqA2}
		x^\delta\big(x^{1-\delta}-\frac{2u_2(b)}{y}\big)\le \frac{2}{y}(u_1(b)-\gamma(b)).
	\end{equation}
	However, as $x>\big(1+\frac{K_1(b)}{y}\big)^{\frac{1}{1-\delta}}$, we have $x^\delta>1$, and then
	$
	x^{1-\delta}-\frac{2u_2(b)}{y}>\frac{2u_1(b)+2|\gamma(b)|}{y}.
	$
	As such, (\ref{ineqA2}) leads to $\frac{2}{y}(u_1(b)+|\gamma(b)|)<\frac{2}{y}(u_1(b)-\gamma(b))$, which is a contradiction. Thus (\ref{ineqA0}) holds.
	\vskip 5pt
	For $\lambda>0$, using (\ref{ineqA0}), we have $\overline{X}_B(\lambda\xi)\le\big(1+\frac{K_1(B)}{\lambda\xi}\big)^{\frac{1}{1-\delta}}+K_2(B)$, and there exists $K_3>0$, s.t.
	\begin{equation}\label{ineqA3}
		\overline{X}_B(\lambda\xi)\le K_3\big(1+\lambda^{-\frac{1}{1-\delta}}\xi^{-\frac{1}{1-\delta}}K_1(B)^{\frac{1}{1-\delta}}+\theta(B)+|\underline{x}_B|+K(B)\big).
	\end{equation}
	It follows that for some $K_4>0$:
	\begin{equation*}
		\begin{aligned}
			\xi \overline{X}_B(\lambda\xi)^+\le K_4\bigg[\xi\big(1+|\underline{x}_B|+&\theta(B)+K(B)\big)+\lambda^{-\frac{1}{1-\delta}}\xi^{-\frac{\delta}{1-\delta}}\bigg(u_1(B)^{\frac{1}{1-\delta}}+u_2(B)^{\frac{1}{1-\delta}}+|\gamma(B)|^{\frac{1}{1-\delta}}\bigg)\bigg].
		\end{aligned}
	\end{equation*}
	Based on our conditions in Proposition \ref{suff cond}, we know 
	\begin{equation}\label{eq:g_expect+}
		\mathbb{E}[\xi \underline{X}_B(\lambda\xi)^+]<+\infty,~\forall~\lambda>0.
	\end{equation}
	Combining \eqref{eq:g_expect+} with \eqref{eq:g_expect-} yields $g(\lambda)=\mathbb{E}[\xi \underline{X}_B(\lambda\xi)]\in\mathbb{R}$. That is, Case 1 holds.
	
	For the well-definedness and finiteness of Problem \eqref{problem1}, suppose that we have some $X$ satisfying $\mathbb{E}[\xi X]\le x_0$, and $U(X,B)>-\infty$. We first derive from the definition of $\mathcal{X}_b(y)$ that
	\begin{equation}\label{eq:utineq}
		U(X,B)-\lambda\xi X\le U\left(\underline{X}_B(\lambda\xi),B\right)-\lambda\xi \underline{X}_B(\lambda\xi).
	\end{equation}
	Hence,
	\begin{equation}\label{eq:ut_estimate1}
		U(X,B)^+\le U\left(\underline{X}_B(\lambda\xi),B\right)^++\left(\lambda\xi X-\lambda\xi \underline{X}_B(\lambda\xi)\right)^+\le U\left(\underline{X}_B(\lambda\xi),B\right)^++\left|\lambda\xi X\right|+\left|\lambda\xi \underline{X}_B(\lambda\xi)\right|.
	\end{equation}
	Note that
	$
	\mathbb{E}\left[\left|\xi X\right|\right]=\mathbb{E}\left[\xi X^+\right]+\mathbb{E}\left[\xi X^-\right]=\mathbb{E}\left[\xi X\right]+2\mathbb{E}\left[\xi X^-\right]\le x_0+2\mathbb{E}\left[\xi \underline{x}_B^-\right]<+\infty.
	$
	Similarly, we have
	$
	\mathbb{E}\left[\left|\xi \underline{X}_B(\lambda\xi)\right|\right]\le g(\lambda)+2\mathbb{E}\left[\xi \underline{x}_B^-\right]<+\infty.
	$
	Taking the expectation in \eqref{eq:ut_estimate1}, we obtain
	\begin{equation}\label{eq:ut_estimate2}
		\mathbb{E}[U(X,B)^+]\le \mathbb{E}\left[U\left(\underline{X}_B(\lambda\xi),B\right)^+\right]+C(\lambda),
	\end{equation}
	where $C(\lambda)$ is a finite number depending on $\lambda$. We proceed to prove $\mathbb{E}[U(\underline{X}_B(\lambda\xi),B)^+]<+\infty$. Using (\ref{ineqA3}), 
	\begin{equation*}
		\begin{aligned}
			U(\underline{X}_B(\lambda\xi),B)^+\le u_1(B)K_3^{\delta}\big(1\!+\!\lambda^{-\frac{1}{1-\delta}}\xi^{-\frac{1}{1-\delta}}K_1(B)^{\frac{1}{1-\delta}}\!+\!\theta(B)\!+\!|\underline{x}_B|\!+\!K(B)\big)^{\delta}\!+\!u_2(B).
		\end{aligned}
	\end{equation*}
	For simplicity, we take $\lambda=1$ and derive for some $K_5>0$:
	\begin{equation}\label{ineqA4}
		\begin{aligned}
			U(\underline{X}_B(\xi),B)^+&\le\!\!\! &&K_5\bigg[u_1(B)+u_1(B)\theta(B)^{\delta}+u_1(B)K(B)^{\delta}+u_1(B)|\underline{x}_B|^{\delta}\\
			& &&\!+\xi^{-\frac{\delta}{1-\delta}}\bigg(u_1(B)^{\frac{1}{1-\delta}}+u_1(B)u_2(B)^{\frac{\delta}{1-\delta}}+u_1(B)|\gamma(B)|^{\frac{\delta}{1-\delta}}\bigg)\bigg]+u_2(B).
		\end{aligned}
	\end{equation}
	As $\mathbb{E}[\xi^{-\frac{\delta}{1-\delta}}u_1(B)^{\frac{1}{1-\delta}}]<+\infty$ and $\mathbb{E}[\xi]<+\infty$, using H{\"o}lder's inequality, we obtain
	$$
	\mathbb{E}[u_1(B)]=\mathbb{E}\big[\big(\xi^{-\frac{\delta}{1-\delta}}u_1(B)^{\frac{1}{1-\delta}}\big)^{1-\delta}\xi^\delta\big]<+\infty.
	$$
	Similarly, we can write the terms $u_2(B)$, $u_1(B)\theta(B)^{\delta}$, $u_1(B)K(B)^{\delta}$, $u_1(B)|\underline{x}_B|^{\delta}$, $\xi^{-\frac{\delta}{1-\delta}}u_1(B)u_2(B)^{\frac{\delta}{1-\delta}}$ and $\xi^{-\frac{\delta}{1-\delta}}u_1(B)|\gamma(B)|^{\frac{\delta}{1-\delta}}$ in interpolate forms and then obtain their integrability.
	Using \eqref{eq:ut_estimate2} and (\ref{ineqA4}), we have $\mathbb{E}\left[U(X,B)^+\right]<+\infty$, and  also
	\begin{equation*}
		\sup_{\mathbb{E}[\xi X]\le x_0\atop U(X,B)>-\infty}\mathbb{E}\left[U(X,B)\right]\le\mathbb{E}\left[U\left(\underline{X}_B(\xi),B\right)^+\right]+C(1)<+\infty.
	\end{equation*}
	That is, Problem \eqref{problem1} is well-defined and finite.
\end{proof}

\section{Proofs in Section \ref{section_general}}\label{app_general}
\subsection{Proof of Theorem \ref{thm_concave}}
\label{section_proof_concave}
We first propose a lemma.
\begin{lemma}\label{lemma2}
	Suppose that $Z,Z_1$ and $Z_2$ are finite random variables with $Z>0$, $Z_1\le Z_2$ a.s., and $A\in\mathcal{F}$ is a set with positive measure. If $\mathbb{E}[Z_1 Y^+ - Z_2 Y^-] \le 0$ holds for any bounded random variable $Y$ satisfying that $Y$ is supported on $A$, random variables $ZY, Z_1Y^+$ and $Z_2Y^-$ are integrable and $\mathbb{E}[ZY]=0$, then there exists a real number $\lambda$ such that $Z_1\le\lambda Z\le Z_2$ a.s. on $A$.
\end{lemma}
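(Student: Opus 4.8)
The plan is to recognise this as a variational inequality: the hypothesis says that $Y\equiv 0$ is optimal for a linearly constrained problem, and the conclusion is precisely the existence of the associated Lagrange multiplier. I would reduce the statement to a scalar comparison of an essential supremum with an essential infimum, and then prove that comparison by perturbing around $0$.

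First, since $Z>0$ and $Z,Z_1,Z_2$ are a.s.\ finite, the ratios $Z_1/Z$ and $Z_2/Z$ are well-defined a.s.\ finite random variables with $Z_1/Z\le Z_2/Z$ a.s.\ on $A$. A real $\lambda$ with $Z_1\le\lambda Z\le Z_2$ a.s.\ on $A$ exists if and only if $\esssup_A(Z_1/Z)\le\essinf_A(Z_2/Z)$, in which case any $\lambda$ in that interval works; and since both ratios are a.s.\ finite on the positive-measure set $A$, once this inequality is known the interval is automatically a nonempty bounded subinterval of $\mathbb{R}$. So it suffices to prove $\esssup_A(Z_1/Z)\le\essinf_A(Z_2/Z)$.

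Suppose this fails. Then I can choose reals $\alpha>\beta$ with $\essinf_A(Z_2/Z)<\beta<\alpha<\esssup_A(Z_1/Z)$, and set $A_1=A\cap\{Z_1>\alpha Z\}$, $A_2=A\cap\{Z_2<\beta Z\}$; both have positive measure by the choice of $\alpha,\beta$, and $A_1\cap A_2$ is null because there $Z_1>\alpha Z>\beta Z>Z_2$ would contradict $Z_1\le Z_2$. Since $Z,Z_1,Z_2$ are a.s.\ finite, for $N$ large enough the truncated sets $A_i'=A_i\cap\{Z\vee|Z_1|\vee|Z_2|\le N\}$ $(i=1,2)$ still have positive measure; fix such an $N$. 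Take $Y=s\,\id_{A_1'}-t\,\id_{A_2'}$ with $s=\mathbb{E}[Z\id_{A_2'}]>0$ and $t=\mathbb{E}[Z\id_{A_1'}]>0$. Then $Y$ is bounded, vanishes off $A$, has $Y^+=s\,\id_{A_1'}$ and $Y^-=t\,\id_{A_2'}$ a.s., and — by the truncation — $ZY$, $Z_1Y^+$ and $Z_2Y^-$ are bounded hence integrable, while $\mathbb{E}[ZY]=s\,\mathbb{E}[Z\id_{A_1'}]-t\,\mathbb{E}[Z\id_{A_2'}]=0$; so $Y$ is admissible. Using $Z_1>\alpha Z$ on $A_1'$ and $Z_2<\beta Z$ on $A_2'$ (both positive-measure sets),
\[
\mathbb{E}[Z_1Y^+-Z_2Y^-]=s\,\mathbb{E}[Z_1\id_{A_1'}]-t\,\mathbb{E}[Z_2\id_{A_2'}]>\alpha\,s\,\mathbb{E}[Z\id_{A_1'}]-\beta\,t\,\mathbb{E}[Z\id_{A_2'}]=(\alpha-\beta)\,\mathbb{E}[Z\id_{A_1'}]\,\mathbb{E}[Z\id_{A_2'}]>0,
\]
contradicting $\mathbb{E}[Z_1Y^+-Z_2Y^-]\le 0$. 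Hence $\esssup_A(Z_1/Z)\le\essinf_A(Z_2/Z)$, and any $\lambda$ in this interval satisfies $Z_1\le\lambda Z\le Z_2$ a.s.\ on $A$.

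The main obstacle is not conceptual but organisational: the perturbation $Y$ must \emph{simultaneously} be admissible in the precise sense of the hypothesis — bounded, supported on $A$, with $ZY,Z_1Y^+,Z_2Y^-$ integrable and $\mathbb{E}[ZY]=0$ — and strictly violate the inequality. Truncating to $\{Z\vee|Z_1|\vee|Z_2|\le N\}$ secures all integrability requirements without making the sets null, and the balancing weights $s=\mathbb{E}[Z\id_{A_2'}]$, $t=\mathbb{E}[Z\id_{A_1'}]$ enforce $\mathbb{E}[ZY]=0$ while preserving the strict sign in the final display.
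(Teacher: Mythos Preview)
Your proof is correct. The core perturbation idea---take $Y$ supported on two disjoint subsets of $A$ where $Z_1/Z$ is large and $Z_2/Z$ is small, with weights chosen to enforce $\mathbb{E}[ZY]=0$---is exactly what the paper does in its Step~2. The difference is organisational: the paper proceeds in three steps (first show some $\lambda$ with $Z_2\ge\lambda Z$ exists, then pick $\lambda_0=\sup\{\lambda:Z_2\ge\lambda Z\text{ a.s.\ on }A\}$ and argue $Z_1\le\lambda_0 Z$, then handle unbounded $Z,Z_1,Z_2$ by a separate truncation-and-limit argument), whereas your $\esssup_A(Z_1/Z)\le\essinf_A(Z_2/Z)$ reformulation collapses all of this into a single contradiction, with the truncation $\{Z\vee|Z_1|\vee|Z_2|\le N\}$ absorbed into the construction of $Y$ itself. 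Your route is shorter and avoids the paper's somewhat awkward Step~1; the paper's route has the minor advantage of exhibiting $\lambda_0$ explicitly as the supremum, though you could equally take $\lambda=\esssup_A(Z_1/Z)$.
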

\begin{proof}[Proof of Lemma \ref{lemma2}.]
	
	\textbf{Step 1:} We consider the case when $Z,Z_1$ and $Z_2$ are bounded. First, we show by contradiction that $Z_2\ge\lambda Z$ holds almost surely on $A$ for some $\lambda\in\mathbb{R}$. Suppose that for any $\lambda\in\mathbb{R}$, the set $A_1=\{Z_2<\lambda Z\}\cap A$ has positive measure. Then we can take $\lambda$ such that $A_2=\{Z_2\ge\lambda Z\}\cap A$ also has a positive measure. As $\mathbb{P}[\{Z_2<\lambda'Z\}\cap A_1]>0$ for any $\lambda'$, we can define
	$$
	Y =
	\left\{
	\begin{aligned}
		& Y^+ = 1, \text{ on } A_2,\\
		& -Y^- = -\frac{\mathbb{E}[Z \id _{A_2}]}{\mathbb{E}[Z \id_{\{Z_2<\lambda'Z\}\cap A_1}]}\id_{\{Z_2<\lambda'Z\}\cap A_1}, \text{ on } A_1.
	\end{aligned}
	\right.
	$$
	It is verified that $Y$ is bounded and $Z Y,Z_1 Y^+$ and $Z_2 Y^-$ are integrable with $\mathbb{E}[ZY]=0$. We have
	\begin{equation*}
		\mathbb{E}[Z_1 \id_{A_2}]=\mathbb{E}[Z_1 Y^+] \le \mathbb{E}[Z_2 Y^-]=\mathbb{E}\left[Z_2 \frac{\mathbb{E}[Z \id_{A_2}]}{\mathbb{E}[Z \id_{\{Z_2<\lambda'Z\}\cap A_1}]} \id_{\{Z_2<\lambda'Z\}\cap A_1} \right] \le \lambda'\mathbb{E}[Z \id_{A_2}],
	\end{equation*}
	which contradicts to the finiteness of $\mathbb{E}[Z_1 \id_{A_2}]$ because $\lambda'$ can be any real number and $\mathbb{E}[Z \id_{A_2}]>0$. As a result, there exists $\lambda$ such that $Z_2\ge\lambda Z$ holds almost surely on $A$.
	
	\textbf{Step 2:} Now we show that we can choose $\lambda_0\in\mathbb{R}$ such that $Z_2\ge\lambda_0 Z\ge Z_1$ holds almost surely on $A$. Let $\lambda_0=\sup\{\lambda:Z_2\ge\lambda Z ~\text{a.s. on } A\}\in\mathbb{R}$, then $Z_2 \ge \lambda_0 Z$ a.s. on $A$. We prove by contradiction that for every $\lambda>\lambda_0$, $Z_1\le\lambda Z$ a.s. on $A$.
	
	If $\mathbb{P}[\{Z_1>\lambda Z\}\cap A]>0$ for some $\lambda>\lambda_0$, denote $A_3=\{Z_1>\lambda Z\}\cap A$. By definition of $\lambda_0$, we have $A_4=\{Z_2<\lambda Z\}\cap A$ with $\mathbb{P}[A_4]>0$. As $Z_2\ge Z_1$, we also have $A_3\cap A_4=\emptyset$. We can take $Y=Y^+ \id_{A_3} - Y^- \id_{A_4} \neq 0$ a.s. on $A_3\cup A_4$ with $\mathbb{E}[Z Y]=0$. It holds that $\mathbb{E}[Z_1Y^+]>\mathbb{E}[\lambda ZY^+]=\mathbb{E}[\lambda ZY^-]>\mathbb{E}[Z_2Y^-]$, which contradicts to the condition that $\mathbb{E}[Z_1 Y^+ - Z_2 Y^-] \le 0$. Therefore, for every $\lambda>\lambda_0$, $\mathbb{P}[\{Z_1>\lambda Z\}\cap A]=0$, which means that $Z_1\le\lambda_0 Z\le Z_2$ a.s. on $A$.
	
	\textbf{Step 3:} For the unbounded case,  define $A_n=A\cap\{Z,|Z_1|,|Z_2|<n\}$. Applying the results above, we have $\lambda_n$ such that $Z_1\le\lambda_n Z\le Z_2$ a.s. on $A_n$. By finiteness of $Z,Z_1,Z_2$ we know $-\infty<\liminf_{n\to+\infty}\lambda_n\le\limsup_{n\to+\infty}\lambda_n<+\infty$, and we find a convergent subsequence $\lambda_{n_i}\to\lambda$. Then we have $Z_1\le\lambda Z\le Z_2$ a.s. on $A$.
\end{proof}

\begin{proof}[Proof of Theorem \ref{thm_concave}.]
	For the ``if" statement, we will prove it in the second part of Theorem \ref{general thm} under a more general setting.
	
	For the ``only if" statement, a basic idea is to apply the variational method to derive an inequality \eqref{ineq} in Lemma \ref{lemma2} and we use the lemma to find a constant $\lambda$. Above all, for any optimal solution $X$, we know  $\mathbb{E}[\xi \underline{x}_B] \le \mathbb{E}[\xi X] \le x_0$. Next, we prove the ``only if'' part in two cases:
	\begin{itemize}
		\item If $\mathbb{E}[\xi \underline{x}_B]= x_0$, then $X=\underline{x}_B~\text{a.s.}$, that is, $X\in\mathcal{X}_B(\lambda\xi)$ with $\lambda=+\infty$.
		
		\item If $\E[\xi\underline{x}_B]<x_0$, then the set $A=\{X>\underline{x}_B\}$ satisfies $\p[A] > 0$. We are going to prove  $U'_+(X,B)\le\lambda\xi\le U'_-(X,B)$ on $A$ for some $\lambda\ge0$.
		Define
		$
		A_n=\left\{\xi\le n,~X-\underline{x}_B>\frac{1}{n},~U'_+\left(X-\frac{1}{n},B\right)<n\right\}.
		$
		We have $\mathbb{P}[A_n]>0$ for large $n$.
		Suppose that $Y$ is a bounded random variable supported on $A_n$ satisfying $\mathbb{E}[\xi Y]=0$. Noting that $U$ is concave, we have that, for $t$ sufficiently small,
		$
		\big|\frac{1}{t}\big(U(X+tY,B)-U(X,B)\big)\big|\le \big|U'_+ \big(X-\frac{1}{n}, B \big) Y\big| \text{ is bounded},
		$
		which implies that 
		\begin{equation}\label{ineq}
			0 \ge \lim\limits_{t \to 0+}\frac{1}{t}\big(\mathbb{E}[U(X+tY,B)]-\mathbb{E}[U(X,B)]\big)=\mathbb{E}[U'_+(X,B)Y^+-U'_-(X,B)Y^-].
		\end{equation}
		Using Lemma \ref{lemma2},
		we obtain $U'_+(X,B)\le\lambda_n\xi\le U'_-(X,B)$ on $A_n$ with some $\lambda_n\ge0$. As $A_n$ is increasing and converges to $A$, we can always find  a $\lambda<+\infty$ as a limit of subsequence of $\{
		\lambda_n\}$ such that $U'_+(X,B)\le\lambda\xi\le U'_-(X,B)$ holds on $A$ (if $\lambda=+\infty$, then $U'_-(X,B)=+\infty$ on $A$, which is a contradiction).
		
		If we have further  $\mathbb{P}[X=\underline{x}_B]>0$ (i.e., $\mathbb{P}[A^c]>0$), 
		we are going to prove  $U'_+(X, B) \le \lambda\xi \text{ a.s. on } A^c$. For a bounded random variable $Y$ satisfying $\mathbb{E}[\xi Y]=0$, $Y>0$ on $A^c$, $Y\le 0$ on $A_n$, and $Y=0$ on $A\cap A_n^c$, we have for $t$ sufficiently small:
		\begin{equation*}
			0\ge\lim\limits_{t\to0+}\frac{1}{t}\big(\mathbb{E}[U(X+tY,B)]-\mathbb{E}[U(X,B)]\big)=\mathbb{E}[U'_+(\underline{x}_B,B)Y1_{A^c}+U'_-(X,B)Y1_{A_n}].
		\end{equation*}
		Noting that we already have $U'_-(X,B)\ge \lambda_n\xi$ on $A_n$, where
		$$
		\lambda_n  := \sup\{\lambda:\lambda\xi\le U'_-(X,B) ~\text{ a.s. on } A_n\}\in\mathbb{R},
		$$
		we attempt to show $U'_+(\underline{x}_B,B)\le\lambda\xi$ a.s. on $A^c$ for any $\lambda>\lambda_n$. Otherwise for some $\lambda>\lambda_n$, we have $\mathbb{P}[U'_+(\underline{x}_B,B)\id_{A^c}>\lambda\xi]>0$. Then we can take $Y=Y^+>0$ on the set $C=\{U'_+(\underline{x}_B,B)>\lambda\xi\}\cap A^c\subset A^c$ and $Y=-Y^-<0$ on the set $D=\{U'_-(X,B)<\lambda\xi\}\cap A_n\subset A_n$ with $\mathbb{E}[\xi Y]=0$ to obtain
		$$
		0 > \mathbb{E}\left[\lambda\xi Y^+ \id_C-\lambda\xi Y^- \id_D\right] = \lambda\mathbb{E}[\xi Y] = 0,
		$$
		leading to a contradiction. Hence,  $U'_+(\underline{x}_B,B)\le\lambda_n\xi$ holds almost surely on $A^c$, while  $U'_+(X,B)\le\lambda_n\xi\le U'_-(X,B)~\text{a.s.}$ on $A_n$. Letting $n\to+\infty$, as $A_n$ is increasing and converges to $A$, we can always find  a $\lambda<+\infty$ as a limit of the increasing sequence $\{
		\lambda_n\}$ such that $U'_+(X,B)\le\lambda\xi\le U'_-(X,B)$ holds on $A$. We have
		$$
		\left\{
		\begin{aligned}
			& U'_+(\underline{x}_B, B) \le \lambda\xi \text{ a.s. on } A^c,\\
			& U'_+(X, B) \le \lambda\xi \le U'_-(X,B) \text{ a.s. on } A.
		\end{aligned}
		\right.
		$$
		In conclusion of (1)-(3), we know that, for a finite optimal solution $X$, there exists $\lambda\in[0,+\infty]$ such that
		\begin{equation}\label{concave arg}
			\left\{
			\begin{aligned}
				& U'_+(\underline{x}_B, B) \le \lambda\xi \text{ a.s. on the set } \{X=\underline{x}_B\},\\
				& U'_+(X, B) \le \lambda\xi \le U'_-(X,B) \text{ a.s. on the set } \{X>\underline{x}_B\}.
			\end{aligned}
			\right.
		\end{equation}
		
		This means  $X\in \mathcal{X}_B(\lambda\xi)$ almost surely.
	\end{itemize}
\end{proof}

\subsection{Technical discussion}
	 To prove Theorems \ref{general thm}-\ref{main} under weaker conditions, we provide the following Assumptions \ref{ass_G} and \ref{asmp:Hn}, which cover Assumption \ref{ass_standing} and generalize our results. The weaker settings include more types of utilities.
	
	\begin{assumption}\label{ass_G}
		$U(x,\cdot)$ is measurable on $(E,\mathcal{E})$ for any $x\in\mathbb{R}$ and $U(\cdot, b)\in \mathcal{H}$ for any $b \in E$, where $\mathcal{H}$ is the set of all of the function $h:\mathbb{R}\to\mathbb{R}\cup\left\{-\infty\right\}$ satisfying
		\begin{enumerate}[(A)]
			\item $h$ is nondecreasing and upper semicontinuous on $\mathbb{R}$;
			\item There exists $x\in\mathbb{R}$ such that $h(x)>-\infty$;
			\item $h$ admits a concavification $\tilde{h} \triangleq \inf\left\{g: \R \rightarrow \R \cup \{-\infty\} ~|~ g \text{ is concave and } g \geq h \right\}$.
			\item
			For any $n \in \N_+$, define
			\begin{equation}\label{def:HG}
				\begin{aligned}
					H_n(t)=\sup\left\{x\le t:\tilde{h}(x)\le h(x)+\frac{1}{n}\right\},~G_n(t)=\inf\left\{x\ge t:\tilde{h}(x)\le h(x)+\frac{1}{n}\right\}, ~t\in\mathbb{R}.
				\end{aligned}
			\end{equation}
			For any $n \in \N_+$ and $t\in\mathbb{R}$, it holds that $H_n(t) > -\infty$ and $G_n(t) < +\infty$.
		\end{enumerate}
	\end{assumption}
	
	If (C) and (D) hold, it means that the concavification $\tilde{h}$ exists and has a relatively close distance with $h$. 
	For $U$ satisfying Assumption \ref{ass_G},  we get a ``family" of state-dependent concavifications: $\tilde{U}(\cdot,b)$ for all $b \in E$.  
	We further define $H_n(t,b)$ and $G_n(t,b)$ as the state-dependent version of $H_n(t)$ and $G_n(t)$:
	\begin{equation}\label{eq:HnGn}
		\begin{aligned}
			H_n(t,b)=\sup\left\{x\le t:\tilde{U}(x,b)\le U(x,b)+\frac{1}{n}\right\},G_n(t,b)=\inf\left\{x\ge t:\tilde{U}(x,b)\le U(x,b)+\frac{1}{n}\right\}.
		\end{aligned}
	\end{equation}
	{\color{black}
	Moreover, we define
	$
	\hat{H}_b^{(n)}\triangleq\inf\{t\in\R: H_n(t,b)<t\}=\inf\{t\in\R:\tilde{U}(t,b)> U(t,b)+\frac{1}{n}\}.
	$
	As we no longer require $\underline{x}_b>-\infty$, $\hat{H}_b^{(n)}$ becomes a substitute of $\underline{x}_b$. The definition of $\hat{H}_b^{(n)}$ also indicates that $\hat{H}_b^{(n)}\ge\underline{x}_b$. As a substitute of the assumption $\E\[\xi\underline{x}_b\]>-\infty$ we propose the following Assumption \ref{asmp:Hn}:
	}
	\begin{assumption}
		\label{asmp:Hn}
		$\forall~n\in\N_+,~\E\[\xi\hat{H}_B^{(n)-}\]<+\infty$. 	
	\end{assumption}
	{\color{black}
	To conclude, as a weaker version of Assumption \ref{ass_standing}, the new Assumptions \ref{ass_G} and \ref{asmp:Hn} require that the concavification $\tilde{h}$ can be as close to $h$ as we want for every $h=U(\cdot,b)$. An ideal case is that for every $h=U(\cdot,b)$, $\tilde{h}$ coincides with $h$ near $\pm\infty$. Under new assumptions, $h$ can have a finite value near $-\infty$ and can have a positive slope near $+\infty$, which covers a lot of functions that do not satisfy Assumption \ref{ass_standing}.
	}
	We proceed to prove in Lemma \ref{prop:suff} that the range of $\mathcal{H}$ includes that of Assumption \ref{ass_standing}.
	
	\begin{lemma}\label{prop:suff}
		If $U$ satisfies Assumption \ref{ass_standing}, then Assumptions \ref{ass_G}-\ref{asmp:Hn} hold.
	\end{lemma}

	To prove Lemma \ref{prop:suff}, we first propose Lemma \ref{concavification1}, which demonstrates properties of the concavification. 
	\begin{lemma}\label{concavification1}
		For a nondecreasing and upper semicontinuous function 
		$h:\mathbb{R}\to\mathbb{R}\cup\{-\infty\}$, suppose  $h(x)>-\infty$ for some $x\in\mathbb{R}$, and that $h$ admits a concavification $\tilde{h}:\mathbb{R}\to\mathbb{R}\cup\{-\infty\}$. Then:
		
		(1) $\tilde{h}=h$ on $(-\infty,\underline{x})$, and $\tilde{h}$ is continuous on $(\underline{x},+\infty)$. Moreover, $\tilde{h}$ is nondecreasing on $\mathbb{R}$.
		
		(2) For $a, b \in \R$ and $a < b$, if $\tilde{h} > h$ on $(a,b)$, then $\tilde{h}$ is affine on $(a,b)$.

		
		(3) If $\underline{x}>-\infty$, then $\tilde{h}(\underline{x})=h(\underline{x})$, and $\tilde{h}$ is right-continuous at $\underline{x}$. Moreover, if $h(\underline{x})=-\infty$, then there exists $\{x_n\}_{n \geq 1} \subset (\underline{x},+\infty)$ with $x_n\downarrow\underline{x}$ and $\tilde{h}(x_n)=h(x_n)$ (This means that the function $H_n(t)$ defined in \eqref{def:HG} is always finite, and for $\tilde{h}(t)>h(t)$ we have $h(H_n(t))>-\infty$).
		
		

		(4) If $\limsup\limits_{x\to+\infty}\frac{h(x)}{x}=0$, then the function $G_n(t)$ defined in \eqref{def:HG} is always finite.
		
		(5) If $h\in\mathcal{H}$ and $\tilde{h}(t)>h(t)+\frac{1}{n}$ for some $n\in\mathbb{N}_+,~t\in\mathbb{R}$, then $H_n(t)<t<G_n(t)$, and $-\infty<\tilde{h}(H_n(t))\le h(H_n(t))+\frac{1}{n}$,~$\tilde{h}(G_n(t))\le h(G_n(t))+\frac{1}{n}$.
		
		(6) If $h\in\mathcal{H}$, then $\tilde{h}(x)=\sup\limits_{(a,b) \in \R^2: a \le x, b \geq x, a\not= b}\frac{(x-a)h(b)+(b-x)h(a)}{b-a}$.
	\end{lemma}
	\begin{proof}[Proof of Lemma \ref{concavification1}.]
		We point out that a nondecreasing upper semi-continuous function is right-continuous.
		(1) is trivial by the definition of the concavification and properties of concave functions. We prove (2)-(6).
		
		(2) As $\tilde{h}(x)=h(x)=-\infty$ on $(-\infty,\underline{x})$, we know $a\ge\underline{x}$. Then for any $[a',b']\subset(a,b)$, as $h(x)$ is upper semicontinuous and $\tilde{h}(x)$ is continuous, $\tilde{h}(x)-h(x)$ admits a minimum value $\epsilon>0$ on $[a',b']$.
		If $\tilde{h}$ is not affine on $[a',b']$, then  on $[a',b']$ we can find one of its linear interpolation $\hat{h}\not=\tilde{h}$ such that $0\le \tilde{h}(x)-\hat{h}(x)\le\frac{\epsilon}{2}$. Define $\hat{h}=\tilde{h}$ on $\mathbb{R}\backslash[a',b']$.
		As such, $\hat{h}$ is concave and satisfies $\hat{h}(x)\ge h(x)$, while $\hat{h}(x)<\tilde{h}(x)$ at some points of $[a',b']$, which contradicts to the fact that $\tilde{h}$ is the concavification of $h$.
		Thus $\tilde{h}$ is affine on any $[a',b']\subset(a,b)$, and then affine on $(a,b)$.
		
		(3) \textbf{Step 1:} We prove the second assertion by contradiction. If there exists $\delta>\underline{x}$ such that $\tilde{h}(x)>h(x)$ on $(\underline{x},\delta)$, then $\tilde{h}$ is affine on $(\underline{x},\delta)$, and we can assume $\tilde{h}(x)=\alpha x+\beta$. As $\underline{x}>-\infty$, we have $\lim\limits_{x\to \underline{x}+}\tilde{h}(x)-h(x)=+\infty$, that is, one can find $\epsilon\in(\underline{x},\delta)$ such that $\tilde{h}(x)-h(x)>1$ for all $x\in(\underline{x},\epsilon)$. Define
		\begin{equation*}
			\hat{h}(x)=\left\{
			\begin{aligned}
				&(\alpha+\frac{1}{\epsilon-\underline{x}})(x-\epsilon)+\alpha\epsilon+\beta,&&x\in[\underline{x},\epsilon],\\
				&\tilde{h}(x),&&x\in(\epsilon,+\infty),\\
				&-\infty,&&x\in(-\infty,\underline{x}).
			\end{aligned}\right.
		\end{equation*}
		Then
		$\hat{h}$ is a concave function, as we have just turned up the slope of $\tilde{h}$ on $[\underline{x},\epsilon]$. On $(\underline{x},+\infty)$ we have
		$
		\hat{h}(x)-\tilde{h}(x)=\frac{x-\epsilon}{\epsilon-\underline{x}}\id_{(\underline{x},\epsilon)}(x)\in(-1,0],
		$
		as such, $h(x)<\tilde{h}(x)-1<\hat{h}(x)\le\tilde{h}(x)$ on $(\underline{x},\epsilon)$ and $h(x)\le\tilde{h}(x)=\hat{h}(x)$ on $[\epsilon,+\infty)$.  At the point $\underline{x}$, as $h$ is right-continuous, we know $h(\underline{x})\le\hat{h}(\underline{x})$. Thus, the function $\min\{\hat{h},\tilde{h}\}$ is a convex function that is not smaller than $h$, and is smaller than $\tilde{h}$ at some points, which contradicts to the fact that $\tilde{h}$ is the concavification of $h$.
		
		\textbf{Step 2:} We prove the first assertion.
		
		When $h(\underline{x})=-\infty$, as $\tilde{h}$ is nondecreasing, using the second assertion proved above, we know that $\tilde{h}$ is right-continuous at $\underline{x}$ and $\tilde{h}(\underline{x})=-\infty$. As such, it remains to study the case when $h(\underline{x})>-\infty$. If $\tilde{h}(\underline{x})>h(\underline{x})$, as $h$ is right-continuous and that $\tilde{h}$ is nondecreasing, there exist $\delta>\underline{x}$ and $\epsilon>0$ such that $\tilde{h}(x)>h(x)+\epsilon$ holds on $[\underline{x},\delta]$. Using the same methods as in (2), one can get a contradiction. Hence, $\tilde{h}(\underline{x})=h(\underline{x})$.
		
		If $\tilde{h}$ is not right-continuous at $\underline{x}$, then $h(\underline{x})=\tilde{h}(\underline{x})<\lim\limits_{x\to\underline{x}+}\tilde{h}(x)$, noting that $h$ is right-continuous, again we have a $\delta>\underline{x}$ and $\epsilon>0$ such that $\tilde{h}(x)>h(x)+\epsilon$ holds on $[\underline{x},\delta]$. The proof then follows.

		(4) \textbf{Case 1:} If there exists $x_1\ge\underline{x}$ such that $\tilde{h}(x_1)=h(x_1)>-\infty$. Suppose that $\tilde{h}>h$ on $(x_1,+\infty)$, then $\tilde{h}$ is affine on $(x_1,+\infty)$ with $\tilde{h}(x)=\alpha x+\beta$, $\alpha\ge0$.
		
		If $\alpha=0$, then for $x>x_1$, we have $h(x)< \tilde{h}(x)=\tilde{h}(x_1)=h(x_1)$, which contradicts to the fact that $h$ is nondecreasing.
		
		If $\alpha>0$, noting that $\liminf\limits_{x\to+\infty}\frac{h(x)}{x}\ge0$, we have $\lim\limits_{x\to+\infty}\frac{h(x)}{x}=0$. For $x$ sufficiently large, we have $h(x)<\frac{\alpha}{2}x+\beta$. In this case, we can replace the right tail of $\tilde{h}$ by a linear function with a lower slope, which leads to a contradiction.
		
		Therefore, we have $x_2\in(x_1,+\infty)$ satisfying $\tilde{h}(x_2)=h(x_2)$. In this light, if $\sup\{x:\tilde{h}(x)=h(x)\}=x^*<+\infty$, as that $\tilde{h}$ is continuous on $(\underline{x},+\infty)$ and that $h$ is nondecreasing, we have $\tilde{h}(x^*)\le h(x^*)\le\tilde{h}(x^*)$. Hence $\tilde{h}(x^*)=h(x^*)$. Then we have $x'\in(x^*,+\infty)$ such that $\tilde{h}(x')=h(x')$, which is a contradiction, and we know $\sup\{x:\tilde{h}(x)=h(x)\}=+\infty$. Therefore, $G_n(t)$ is finite.
		
		\textbf{Case 2:} If for every $x_1\ge x$ we have $\tilde{h}(x_1)>h(x_1)$ or $\tilde{h}(x_1)=h(x_1)=-\infty$, then, based on (3), we know $\underline{x}=-\infty$. Hence $\tilde{h}>h$ on $\mathbb{R}$. As such, $\tilde{h}(x)$ is affine on $\mathbb{R}$. Assume $\tilde{h}(x)=\alpha x+\beta$. Similar as in \textbf{Case 1}, $\alpha=0$, and $\tilde{h}$ is a constant $\beta$ on $\mathbb{R}$. As $\tilde{h}$ is the concavification of $h$, for every $n\in\mathbb{N_+}$ and $x\in\mathbb{R}$, there exists $x_n>x$ such that $h(x_n)>\beta-\frac{1}{n}=\tilde{h}(x_n)-\frac{1}{n}$. Therefore, $G_n(t)$ is finite.
		
		(5) For $\tilde{h}(t)>h(t)+\frac{1}{n}$, if $\underline{x}=-\infty$, then as $H_n(t)>-\infty$ we know $h(H_n(t))>-\infty$. If $\underline{x}>-\infty$, then using (3) we know that there exists $t'<t$ such that $\tilde{h}(t')=h(t)>-\infty$. Then $H_n(t)\ge t'$ and $h(H_n(t))>-\infty$. The remaining assertions can be derived directly from the (right-)continuity of $\tilde{h}$ and $h$.
		
		(6) It only needs to prove for $x\ge\underline{x}$. As $\tilde{h}$ is concave, we have
		\begin{equation*}
			\tilde{h}(x)\ge\frac{(b-x)\tilde{h}(a)+(x-a)\tilde{h}(b)}{b-a}\ge\frac{(b-x)h(a)+(x-a)h(b)}{b-a}.
		\end{equation*}
		For $\forall~n\in\mathbb{N}_+$, if $\tilde{h}(x)>h(x)+\frac{1}{n}$, based on the fact that $h\in\mathcal{H}$, we find $a<x<b$ with $\tilde{h}$ linear on $[a,b]$, $\tilde{h}(x)>h(x)$ on $(a,b)$ and $h(a)\ge\tilde{h}(a)-\frac{1}{n}$, $h(b)\ge\tilde{h}(b)-\frac{1}{n}$. As such,
		\begin{equation*}
			\tilde{h}(x)=\frac{(b-x)\tilde{h}(a)+(x-a)\tilde{h}(b)}{b-a}\le\frac{(b-x)h(a)+(x-a)h(b)}{b-a}+\frac{1}{n}.
		\end{equation*}
		If  $\tilde{h}(x)\le h(x)+\frac{1}{n}$, we take $a=x<b$ and obtain the same inequality. The statement follows.
		
	\end{proof}
	


	\begin{proof}[Proof of Lemma \ref{prop:suff}.]
		We first prove that Assumption \ref{ass_G} holds, that is, we prove that $U(\cdot,b)$ satisfies (A)-(D) for any $b\in E$. (A) and (B) are obvious based on (a) and (b). For (C), noting that $\limsup\limits_{x\to+\infty}\frac{U(x,b)}{x}=0$ for any given $b$, there exist $\alpha\in\R$ and $\beta\in\mathbb{R}$ (depending on $b$) such that $U(x,b)<\alpha x+\beta$ for $x\ge\underline{x}_b>-\infty$. As such, $U(\cdot,b)$ is dominated by a concave function. Hence $U(\cdot,b)$ admits a concavification $\tilde{U}(\cdot,b):\mathbb{R}\to\mathbb{R}\cup\{-\infty\}$. Using Lemma \ref{concavification1}(3)-(4) we know that (D) holds. Then we prove that Assumption \ref{asmp:Hn} holds. In fact, based on the definition of $\hat{H}_b^{(n)}$ we know $\hat{H}_b^{(n)}\ge\underline{x}_b$. Hence
		$
		\E\[\xi\hat{H}_B^{(n)-}\]\le\E\[\xi\underline{x}_B^-\]<+\infty.
		$
	\end{proof}

	Indeed, our setting involves a rather abstract set $\mathcal{H}$, which takes the case that
	$\limsup\limits_{x\to +\infty}\frac{U(x,b)}{x}>0$ or $\underline{x}_b=-\infty$ under consideration. That is, the utility function is allowed to have a positive slope at $+\infty$, and is allowed to be defined near $-\infty$.
	To help understand the most essential conditions (C) and (D), we illustrate $H_n(t,b)$ and $G_n(t,b)$ in Figure \ref{fig1}. 

\begin{figure}[htbp] 
	\centering
	\includegraphics[width = 0.57\textwidth]{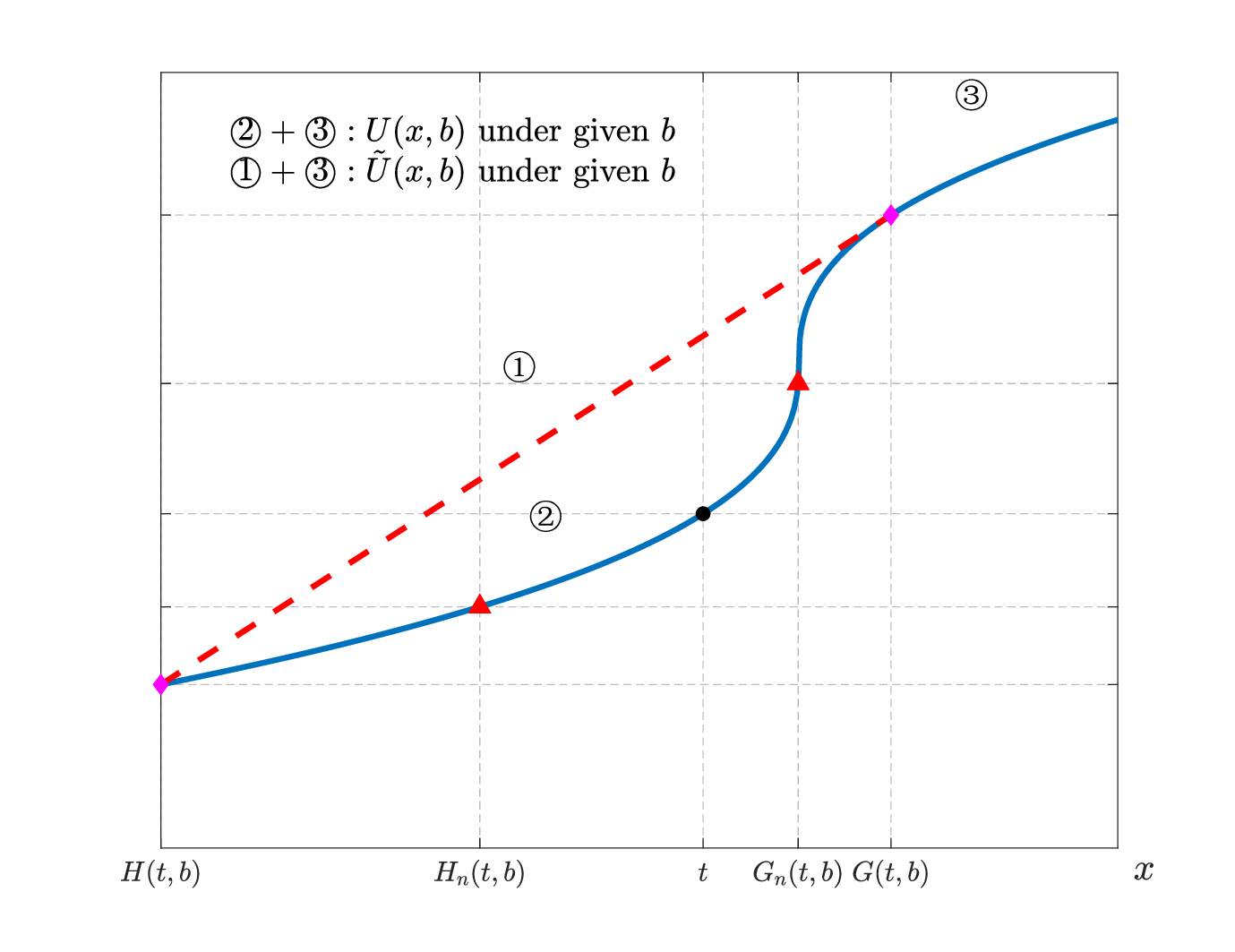}
	\caption{Illustration for functions $G$, $H$, $G_n$ and $H_n$}
	\label{fig1}
\end{figure}

\subsection{Proof of Theorem \ref{general thm}}\label{proof3}
We prove Theorem \ref{general thm} with Assumptions \ref{ass_standing} replaced by weaker Assumptions \ref{ass_G} and \ref{asmp:Hn}.
To prove Theorem \ref{general thm}, we need some further discussion on concavification (Lemma \ref{concavification1}) and non-atomic measures. $H_n$ and $G_n$ are important tools in the proof of Theorem \ref{general thm}, and it is necessary to confirm their measurability before we apply mathematical operations on them, which is stated in the following Lemma \ref{concavification2}. Its proof is rather technical.

\begin{lemma}\label{concavification2}
	Under Assumption \ref{ass_G}, $\tilde{U}$, $H_n$ and $G_n$ are measurable functions on $(\R \times E, \B(\R) \times \e)$.
\end{lemma}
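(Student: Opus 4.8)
The plan is to establish the measurability of $\tilde U$, $H_n$ and $G_n$ jointly in $(x,b)$ (resp. $(t,b)$) by exhibiting each as a countable combination of measurable operations applied to $U$, exploiting that all the relevant suprema and infima over $\mathbb{R}$ can be reduced to suprema and infima over the \emph{rationals} thanks to monotonicity and upper semicontinuity. First I would treat $\tilde U$. By Lemma \ref{concavification1}(6), for $U(\cdot,b)\in\mathcal H$ we have
\[
\tilde U(x,b)=\sup_{(a,c)\in\R^2:\,a\le x\le c,\,a\ne c}\frac{(x-a)U(c,b)+(c-x)U(a,b)}{c-a}.
\]
The map $(x,a,c,b)\mapsto \frac{(x-a)U(c,b)+(c-x)U(a,b)}{c-a}$ is measurable on its domain since $U(x,\cdot)$ is $\e$-measurable for each fixed $x$ and $U(\cdot,b)$ is Borel (being monotone), so $U$ itself is jointly $\B(\R)\times\e$-measurable; arithmetic combinations preserve this. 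The issue is the uncountable supremum over $(a,c)$. Here I would use upper semicontinuity and concavity of the secant expression in $(a,c)$ (more precisely, that the chord slopes vary continuously in the endpoints where $U$ is finite and that one may always push $a,c$ to rational values without decreasing the chord value up to $\epsilon$, using right-continuity $=$ upper semicontinuity of $U(\cdot,b)$) to replace the supremum over $\R^2$ by a supremum over $(a,c)\in\Q^2$ with $a\le x\le c$. A countable supremum of measurable functions is measurable, giving measurability of $\tilde U$ on $\B(\R)\times\e$. One must handle separately the region $x<\underline x_b$, where $\tilde U=U$ by Lemma \ref{concavification1}(1) and is automatically measurable, and the degenerate cases $U(a,b)=-\infty$, which contribute $-\infty$ and cause no problem.

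Next I would handle $H_n$ and $G_n$ as defined in \eqref{eq:HnGn}. Set $\phi_n(x,b)\triangleq U(x,b)+\tfrac1n-\tilde U(x,b)$, which is measurable on $\B(\R)\times\e$ by the previous step (with the convention $(-\infty)-(-\infty)=-\infty$, consistent with $U\le\tilde U$). Then
\[
H_n(t,b)=\sup\{x\le t:\phi_n(x,b)\ge 0\},\qquad G_n(t,b)=\inf\{x\ge t:\phi_n(x,b)\ge 0\}.
\]
For measurability of $H_n$ it suffices to show $\{(t,b):H_n(t,b)>\alpha\}\in\B(\R)\times\e$ for every $\alpha\in\R$. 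The key observation is that the set $S_{n,b}\triangleq\{x:\phi_n(x,b)\ge 0\}$ is \emph{closed} (since $\phi_n(\cdot,b)$ is upper semicontinuous in $x$: $U(\cdot,b)$ is u.s.c. and $\tilde U(\cdot,b)$ is continuous on $(\underline x_b,\infty)$ and u.s.c. at $\underline x_b$ by Lemma \ref{concavification1}(1)(3)), so the supremum defining $H_n(t,b)$ is attained whenever it is finite, and $H_n(t,b)>\alpha$ holds iff there is a rational $q\in(\alpha,t]$ with $\phi_n(q,b)\ge 0$ — here closedness lets us pass from an arbitrary witness $x$ to a nearby rational $q$, using that $S_{n,b}$ has nonempty interior near any isolated-looking point (or, where it does not, that $x$ itself can be perturbed: points of $S_{n,b}$ are either in the closure of $\{\tilde U=U\}$ or are endpoints, and Lemma \ref{concavification1}(2)(5) controls the structure). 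Consequently
\[
\{(t,b):H_n(t,b)>\alpha\}=\bigcup_{q\in\Q\cap(\alpha,\infty)}\bigl(\{b:\phi_n(q,b)\ge 0\}\cap\{(t,b):t\ge q\}\bigr),
\]
a countable union of measurable sets, hence measurable; Assumption \ref{ass_G}(D) guarantees $H_n(t,b)>-\infty$ so there is no value-at-$-\infty$ pathology. The argument for $G_n$ is symmetric, using $G_n(t,b)<\infty$ from Assumption \ref{ass_G}(D).

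The main obstacle I anticipate is \textbf{justifying the reduction of the uncountable sup/inf to a countable one}, i.e., the rational-approximation steps for both $\tilde U$ in part one and $H_n,G_n$ in part two. For $\tilde U$ this requires a careful argument that the chord-slope functional, maximized over endpoints $(a,c)$, loses nothing by restriction to rationals; the delicate points are: (a) when the optimal right endpoint $c$ is $+\infty$ (handled by the asymptotic-slope behavior, noting $\mathcal H$ allows positive asymptotic slope so one needs a limiting/monotone argument in $c$), and (b) when the optimal left endpoint $a$ equals $\underline x_b$ with $U(\underline x_b,b)=-\infty$, handled by Lemma \ref{concavification1}(3) which produces a sequence $x_n\downarrow\underline x_b$ with $\tilde U(x_n)=U(x_n)>-\infty$, allowing approximation from inside the domain. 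For $H_n,G_n$ the subtlety is that $S_{n,b}$ may be a single point or a union of intervals and a point witnessing $H_n(t,b)>\alpha$ need not be rational nor have rational points of $S_{n,b}$ arbitrarily close on the correct side; this is resolved by the closedness of $S_{n,b}$ together with Lemma \ref{concavification1}(2)(5), which forces $\tilde U$ to be affine on the complementary open intervals and thus pins down the structure of $S_{n,b}$ enough to run the approximation. Once these reductions are in place, everything else is a routine assembly of countably many measurable operations, which is presumably why the authors call the proof ``rather technical'' and defer it to the appendix.
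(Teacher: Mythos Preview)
Your treatment of $\tilde U$ via Lemma~\ref{concavification1}(6) and reduction to rational endpoints is essentially the paper's argument; the paper makes the substitution $a'=x-a$, $c'=c-x$ so that the countable sup is over rational \emph{offsets} rather than rational endpoints, which makes joint measurability in $(x,b)$ transparent, but the idea is the same.

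Your argument for $H_n$ and $G_n$, however, has a genuine gap: the displayed identity
\[
\{(t,b):H_n(t,b)>\alpha\}=\bigcup_{q\in\Q\cap(\alpha,\infty)}\bigl([q,\infty)\times\{b:\phi_n(q,b)\ge 0\}\bigr)
\]
is false in general, and the structural facts you invoke (closedness of $S_{n,b}$, Lemma~\ref{concavification1}(2)(5)) do not repair it. The set $S_{n,b}=\{x:\tilde U(x,b)\le U(x,b)+1/n\}$ can have an \emph{isolated irrational point}: take $U$ equal to $0$ on $[0,\sqrt2)$, $1$ on $[\sqrt2,2\sqrt2)$, $3$ on $[2\sqrt2,\infty)$, and $-\infty$ on $(-\infty,0)$. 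Then $\tilde U(x)=\tfrac{3x}{2\sqrt2}$ on $[0,2\sqrt2]$, and for $n=2$ one computes $S_2=(-\infty,\tfrac{\sqrt2}{3}]\cup\{\sqrt2\}\cup[2\sqrt2,\infty)$. For $\alpha=1$ and any $t\in[\sqrt2,2\sqrt2)$ we have $H_2(t)=\sqrt2>\alpha$, yet there is no rational $q\in(\alpha,t]$ with $q\in S_2$. The isolated point arises precisely because $\tilde U-U$ drops to exactly $1/n$ at a jump of $U$; this is neither in the closure of $\{\tilde U=U\}$ nor controllable by the affine structure of $\tilde U$ on the complement.

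The paper circumvents this by analysing the \emph{sublevel} set $\{(t,b):H_n(t,b)<s\}$ instead. This set is characterised (for $t\ge s$) by the condition ``$\tilde U(x,b)>U(x,b)+1/n$ for \emph{every} $x\in[s,t]$''. Because $\tilde U(\cdot,b)-U(\cdot,b)$ is lower semicontinuous and $[s,t]$ is compact, the strict inequality self-improves to $\tilde U-U\ge 1/n+1/j$ on $[s,t]$ for some $j$, which then persists to $[s,t']$ for some rational $t'\ge t$; finally the uniform inequality on $[s,t']$ is verified on the countable family of dyadic partition points (using right-continuity of $U$ and continuity of $\tilde U$ to pass from grid to full interval). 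This compactness-and-uniform-gap manoeuvre on the \emph{complement} of $S_{n,b}$ is exactly what your rational-witness approach on $S_{n,b}$ itself cannot supply.
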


\begin{proof}{Proof of Lemma \ref{concavification2}.}
	(1) For $\tilde{U}$, thanks to Lemma \ref{concavification1}, we have
	\begin{equation*}
		\begin{aligned}
			\tilde{U}(x,b)&=\mathop{\sup\limits_{a\le x\le c}}\limits_{a\not= c}\frac{(x-a)U(c,b)+(c-x)U(a,b)}{c-a}
			&=\sup\limits_{a',c'\in[0,+\infty]\cap\mathbb{Q}\atop (a',c')\not= (0,0)}\frac{a'U(x+c',b)+c'U(x-a',b)}{a'+c'},
		\end{aligned}
	\end{equation*}	
	which indicates that $\tilde{U}$ is measurable.
	
	(2) Define $A_1 = \{(t,b):H_n(t,b)<s\}$ and
	$$
	A_2 = \big\{(t,b) \in \R \times E:t\ge s \text{~and~}\tilde{U}(x,b)>U(x,b)+\frac{1}{n}~\text{for every}~x\in[s,t]\big\}\cup((-\infty,s)\times E).
	$$
	
	We first show that $A_1 = A_2$. It is obvious that $A_1\subset A_2$, and we prove $A_2\subset A_1$. As $t<s$ leads to $H_n(t,b)<s$, it suffices to consider the case that $t\ge s$. If $\tilde{U}(x,b)>U(x,b)+\frac{1}{n}~\text{holds for every}~x\in[s,t]$, then $s>\underline{x}_b$, and by definition of $H_n$ we know $H_n(t,b)\le s$, and when the equality holds, we have $x_k\uparrow s$ satisfying $\tilde{U}(x_k,b)\le U(x_k,b)+\frac{1}{n}$. Therefore, we derive a contradiction that $\tilde{U}(s,b)>U(s,b)+\frac{1}{n}\ge\varlimsup\limits_{x_k\uparrow s}U(x_k,b)+\frac{1}{n}\ge\varlimsup\limits_{x_k\uparrow s}\tilde{U}(x_k,b)=\tilde{U}(s,b)$. As such, $A_2\subset A_1$, and hence $A_1=A_2$.
	
	Now we investigate the structure of $A_2$. Define
	$$
	A_3 = \big\{(t,b):t\ge s \text{~and~}\tilde{U}(x,b)>U(x,b)+\frac{1}{n}~\text{for every}~x\in[s,t]\big\}.
	$$
	We are going to prove
	\begin{equation}\label{reform1}
		A_3=\mathop{\bigcup}_{j\ge 1}\mathop{\bigcup}_{x\in\mathbb{Q},x\ge s}\bigg([s,x]\times\mathop{\bigcap}_{N\ge 1}F_j\left(z_1^{(N)}(x),z_2^{(N)}(x),...,z_{N+1}^{(N)}(x)\right)\bigg),
	\end{equation}
	where $F_j (z_1,z_2,...,z_m)\triangleq\mathop{\bigcap}\limits_{1\le k\le m}\left\{b\in E:~\tilde{U}(z_k,b)>U(z_k,b)+\frac{1}{n}+\frac{1}{j}\right\}$, and $\left\{z_i^{(N)}(x)\right\}_{1\le i\le N+1}$ denotes the $N$-uniform partition points of the interval $[s,x]$. In fact, if we denote the right side of (\ref{reform1}) by $A_4$, then for $(t,b)\in A_3$, we know that $\tilde{U}(x,b)>U(x,b)+\frac{1}{n}$ holds for every $x\in[s,t]$, and hence $t\ge s>\underline{x}_b$. Define $\theta=\inf\left\{x\in[s,t]:\tilde{U}(x,b)-U(x,b)\right\}$. We have $\{y_n\}\subset[s,t]$ satisfying $y_n\to y$ and $\tilde{U}(y_n,b)-U(y_n,b)\to\theta$. As $\tilde{U}(\cdot,b)$ is continuous on $(\underline{x}_b,+\infty)$ and that $U(\cdot,b)$ is upper semicontinuous,
	\begin{equation*}
		\theta=\lim_{n\to+\infty}\left(\tilde{U}(y_n,b)-U(y_n,b)\right)\ge\tilde{U}(y,b)-U(y,b)\ge\theta,
	\end{equation*}
	thus $\theta=\tilde{U}(y,b)-U(y,b)>\frac{1}{n}$, and we have some $j\in\mathbb{N}_+~s.t.~\theta>\frac{1}{n}+
	\frac{1}{j}$. Again using the continuity of $\tilde{U}(\cdot,b)$ and upper semicontinuity of $U(\cdot,b)$, we know  $\tilde{U}(x,b)>U(x,b)+\frac{1}{n}+\frac{1}{2j}$ also holds on $x\in[s,t+\delta]$ for some $\delta>0$, that is, one can find $t'\in\mathbb{Q}$ such that $t'\ge t$ and $[s,t']\times\{b\}\subset A_3$. Based on definition of $F_j$,  we know that $b\in F_{2j}\left(z_1^{(N)}(t'),z_2^{(N)}(t'),...,z_{N+1}^{(N)}(t')\right)$ for any $N$, thus
	\begin{equation*}
		(t,b)\in[s,t']\times\{b\}\subset[s,t']\times\mathop{\bigcap}_{N\ge 1}F_{2j}\left(z_1^{(N)}(t'),z_2^{(N)}(t'),...,z_{N+1}^{(N)}(t')\right)\subset A_4,
	\end{equation*}
	and we know $A_3\subset A_4$. It suffices to show $A_4\subset A_3$.
	
	For $(t,b)\in A_4$, $(t,b)\in[s,x]\times\mathop{\bigcap}\limits_{N\ge 1}F_j\left(z_1^{(N)}(x),z_2^{(N)}(x),...,z_{N+1}^{(N)}(x)\right)$ with some $x\in\mathbb{Q}$ and $j\in\mathbb{N}_+$, that is, $s\le t\le x$ and, $\tilde{U}\left(z_k^{(N)}(x),b\right)>U\left(z_k^{(N)}(x),b\right)+\frac{1}{n}+\frac{1}{j}$ for any $k,~N$ satisfying $1\le k\le N+1$. Noting that $\tilde{U}(\cdot,b)$ is continuous and that $U(\cdot,b)$ is right-continuous, we have $\tilde{U}(x,b)\ge U(x,b)+\frac{1}{n}+\frac{1}{j}>U(x,b)+\frac{1}{n}$ on $[s,x]\supset [s,t]$. It follows that $A_4\subset A_3$, which leads to $A_3=A_4$.
	
	As for a given $z$, $\tilde{U}(z,b)-U(z,b)$ is measurable in $b$, we know
	$
	F_j\left(z_1^{(N)}(x),z_2^{(N)}(x),...,z_{N+1}^{(N)}(x)\right)\in\mathcal{E}.
	$
	Therefore, $A_3=A_4\in\mathcal{B}(\mathbb{R})\times \e$, and $A_1=A_2=A_3\cup((-\infty,s)\times E)$ is also a measurable set, which means that $H_n$ is a measurable function. Similarly, one can prove the measurability of $G_n$.
\end{proof}

At last, for non-atomic measures, we need the following result in \citet{S1922}:
\begin{lemma}\label{prop1}
	If $\mu$ is a non-atomic measure on $(\Omega,\mathcal{F})$ with $\mu(\Omega)=c$, then there exists a one-parameter family of increasing measurable sets $\{A_t\}_{0\le t\le c}$ such that $\mu (A_t)=t$.
\end{lemma}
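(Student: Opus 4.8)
This is Sierpi\'nski's classical theorem on the range of a non-atomic measure, and the plan is to prove it in three stages: (a) upgrade the one-step ``splitting'' property in the definition of non-atomicity to the global intermediate-value property that every $A\in\F$ contains, for each $\beta\in[0,\mu(A)]$, a measurable subset of measure exactly $\beta$; (b) use (a) to build the family $A_t$ at all dyadic levels $t\in[0,c]$ in a monotone fashion; and (c) extend to every $t\in[0,c]$ by a monotone limit. We may and do assume $c<+\infty$, which is the only case needed here.

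Stage (a) carries all the content. First I would record the auxiliary fact that if $\mu(A)>0$ then $A$ has measurable subsets of arbitrarily small positive measure: otherwise $\delta\triangleq\inf\{\mu(B):B\in\F,\,B\subset A,\,\mu(B)>0\}>0$, and picking $B\subset A$ with $0<\mu(B)<2\delta$ and splitting it by non-atomicity into $C$ and $B\setminus C$ with $0<\mu(C)<\mu(B)$ forces $\mu(B)=\mu(C)+\mu(B\setminus C)\ge 2\delta$, a contradiction. Granting this, fix $A$ and $\beta\in(0,\mu(A))$ and construct $B$ greedily: set $B_0=\emptyset$; having chosen $B_n\subset A$ with $\mu(B_n)\le\beta$, let $r_n\triangleq\sup\{\mu(C):C\in\F,\,C\subset A\setminus B_n,\,\mu(C)\le\beta-\mu(B_n)\}$, pick an admissible $C_n$ with $\mu(C_n)>r_n-1/n$, and put $B_{n+1}=B_n\cup C_n$. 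Then $\mu(B_n)$ increases to some $s\le\beta$ and $B\triangleq\bigcup_n B_n$ satisfies $\mu(B)=s$. If $s<\beta$, then $\mu(A\setminus B)=\mu(A)-s>0$, so $A\setminus B$ contains a measurable $D$ with $0<\mu(D)<\beta-s\le\beta-\mu(B_n)$; thus $D$ is admissible at every stage, so $\mu(D)\le r_n<\mu(C_n)+1/n=\mu(B_{n+1})-\mu(B_n)+1/n\to 0$, forcing $\mu(D)=0$, a contradiction. Hence $\mu(B)=\beta$.

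For stage (b), set $A_0=\emptyset$ and $A_c=\Omega$; given $A_s\subset A_t$ with $\mu(A_s)=s$ and $\mu(A_t)=t$ at two consecutive dyadic levels, apply stage (a) to the set $A_t\setminus A_s$ (of measure $t-s$) to obtain a measurable $D$ with $\mu(D)=(t-s)/2$, and set $A_{(s+t)/2}\triangleq A_s\cup D$, so that $A_s\subset A_{(s+t)/2}\subset A_t$ and $\mu(A_{(s+t)/2})=(s+t)/2$. Iterating over all dyadic rationals defines $A_q$ for every $q\in\{kc/2^m:m\ge 0,\,0\le k\le 2^m\}$, with $q\le q'\Rightarrow A_q\subset A_{q'}$ and $\mu(A_q)=q$. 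For stage (c), for arbitrary $t\in[0,c]$ put $A_t\triangleq\bigcup\{A_q:q\text{ dyadic},\,q\le t\}$: this is a countable union of measurable sets, hence measurable; it agrees with the earlier $A_q$ when $t=q$ is dyadic; monotonicity in $t$ is immediate; and enumerating the dyadic rationals in $[0,t]$ and taking finite unions produces an increasing sequence of sets with union $A_t$ and measures converging to $\sup\{q\le t:q\text{ dyadic}\}=t$, so $\mu(A_t)=t$ by continuity of $\mu$ from below.

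The main obstacle is stage (a): non-atomicity as given is merely a qualitative, one-step splitting statement, and the real work is the convergence estimate in the greedy construction that turns it into exact attainment of each value $\beta$. Everything else is formal: non-atomicity enters only through (a), and no measurability difficulty arises because every set constructed is obtained from measurable sets by countably many Boolean operations.
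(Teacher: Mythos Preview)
Your proof is correct and complete: the exhaustion argument in stage~(a) correctly upgrades non-atomicity to the full intermediate-value property, and the dyadic construction in stages~(b) and~(c) then yields the required increasing family. The paper itself does not prove this lemma; it simply attributes the result to Sierpi\'nski (1922) and uses it as a black box, so your write-up supplies strictly more than the paper does here.
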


In light of Lemmas \ref{concavification1}-\ref{prop1} above, we proceed to prove Theorem \ref{general thm}. The idea is to substitute $X(\omega)$ locally by some $\hat{H}(\omega)$ and $\hat{G}(\omega)$ for some $\omega$ such that $X(\omega)$ does not lie on the concave part of $U$ and $\hat{H}\le X\le \hat{G}$. We proceed to find a proper substitution of $X$ with the form
$
\hat{X}=\hat{H} \id_C + \hat{G} \id_D + X \id_{\left(C\cup D\right)^c},
$
which increases the utility, while the budget value $\mathbb{E}[\xi X]$ keeps unchanged. Figure \ref{fig1} is provided to assist in understanding.
Define
$$
H(t,b)=\sup\left\{x\le t:\tilde{U}(x,b)= U(x,b)\right\},
G(t,b)=\inf\left\{x\ge t:\tilde{U}(x,b)= U(x,b)\right\}.
$$
In the following proof, we assume that $H(t,b)$ and $G(t,b)$ are finite, and $\tilde{U}(H(t,b),b)=U(H(t,b),b)>-\infty$, $\tilde{U}(G(t,b),b)=U(G(t,b),b)$ when $\tilde{U}(t,b)>U(t,b)$. Then, based on Lemma \ref{concavification2}, we know that $H(t,b)$ and $G(t,b)$ are measurable. Moreover, we define
$\hat{H}_b=\inf\{t\in\R:H(t,b)<t\}$ and assume $\E\[\xi\hat{H}_B^-\]<+\infty$. These conditions provide a concise proof. For the weaker case with only $H_n(t,b)$ and $G_n(t,b)$ being finite and  Assumption \ref{asmp:Hn}, the proof is similar;  see also Remark \ref{rmk:HnGn}.
\begin{proof}[Proof of Theorem \ref{general thm}.]
	\begin{enumerate}[(i)]
		\item \textbf{Part I:}
		Let us first assume that the concavification problem is well-defined.
		It is equivalent to study both problems with binding budget constraints, i.e., $\E[\xi X] = x_0$. We are going to prove \eqref{eq:equivalence} by contradiction. Suppose 
		\begin{equation}\label{def:beta}
			\alpha \triangleq \sup_{X: \E[\xi X] = x_0\atop U(X,B)>-\infty}\mathbb{E}[\tilde{U}(X,B)] > \beta \triangleq \sup_{X: \E[\xi X] = x_0\atop \tilde{U}(X,B)>-\infty}\mathbb{E}[U(X,B)].
		\end{equation}
		As such, we have some random variable $X$ satisfying $\E[\xi X] = x_0$ and
		$
		\mathbb{E}[\tilde{U}(X,B)]>\beta.
		$
		Define
		$$
		S=\left\{ (x,b)\in\R\times E: \tilde{U}(x,b) > U(x,b)  \right\},~~ Q=\left\{\omega \in \Omega: (X(\omega),B(\omega)) \in S \right\}.
		$$
		It follows that $\mathbb{P}[Q]>0$. Define two random variables
		$
		\hat{H}=H(X,B),~ \hat{G}=G(X,B).
		$
		On $Q$, we have $\hat{H}<X<\hat{G}$. Hence $\tilde{U}(x,B)$ is affine in $x$ for $\hat{H}<x<\hat{G}$ and $\tilde{U}(x,B)=a_Bx+c_B$, where
		$
		a_B=\frac{\tilde{U}(\hat{G},B)-\tilde{U}(\hat{H},B)}{\hat{G}-\hat{H}}.
		$
		Based on Lemmas \ref{concavification1} and \ref{concavification2}, $a_B$ and $c_B$ are measurable on $Q$, and
		$\hat{H}$, $\hat{G}$, $a_B$ and $c_B$ are finite.
		In the following Steps 1-3, we desire to construct a new random variable $\hat{X}$ by slightly changing $X$ on $Q$ satisfying $\E[\xi \hat{X}] = x_0$ and $\E[U(\hat{X}, B)] > \beta$, which is a contradiction.
		
		\textbf{Step 1:} Take a countable partition of $\mathbb{R}^5$ as
		$
		\mathbb{R}^5=\mathop{\bigcup}_{n\ge1}P_n,~ \{P_n\}_{n\ge1} \text{ are bounded sets}.
		$
		This leads to a partition of $Q$ as
		$
		Q=\mathop{\bigcup}_{n\ge1}Q_n,~ Q_n\triangleq Q\cap \left\{\omega\in\Omega: (\hat{H},\hat{G},a_B,c_B,\xi)\in P_n\right\},
		$
		and we proceed to make our substitution of $X$ on every $Q_n$ that $\mathbb{P}\left[Q_n\right]>0$.
		
		\textbf{Step 2:}  
		For every $n$ with $\mathbb{P}\left[Q_n\right]>0$, we need to determine a partition $Q_n = C_n \cup D_n$ and then we replace $X$ by $\hat{H}$ on the set $C_n$ and by $\hat{G}$ on the set $D_n$. We need to make sure that the new variable $\hat{X}$ satisfies
		\begin{equation}\label{eq:budget}
			\mathbb{E}\[\xi \hat{X}\id_{Q_n}\] = \E\[\xi X\id_{Q_n}\]
		\end{equation}
		and
		\begin{equation}\label{eq:ut_increase}
			\mathbb{E}\left[\tilde{U}(X,B)\id_{Q_n}\right]\le\mathbb{E}\left[\tilde{U}(\hat{X},B)\id_{Q_n}\right]=\mathbb{E}\left[U(\hat{X},B)\id_{Q_n}\right].
		\end{equation}
		To this end, we denote $p=\mathbb{E}[\xi X \id_{Q_n}]$. Noting that $\hat{H}$, $\hat{G}$, $a_B$, and $\xi$ are bounded on $Q_n$, we define 
		$
		F(t)=\mathbb{E}\left[\xi G \id_{\left\{\xi\le t a_B\right\} \cap Q_n}\right]+\mathbb{E}\big[\xi \hat{H} \id_{\left\{\xi> ta_B\right\} \cap Q_n}\big],~t\ge0.
		$
		
		As $\hat{H}<X<G$ holds on $Q$, we have $F(0)=\mathbb{E}\left[\xi \hat{H} \id_{Q_n}\right]<p$, $F(+\infty)=\mathbb{E}[\xi G \id_{Q_n}]>p$, and that $F$ is nondecreasing and right-continuous. In the following, we construct $C_n$ and $D_n$ through two cases such that
		$
		\E[\xi (\hat{H} \id_{C_n} + \hat{G} \id_{D_n})] = p.
		$
		\begin{itemize}
			\item If $F(\sigma)=p$ holds for some $\sigma>0$, we define
			$
			C_n=\left\{ \xi> \sigma a_B \right \} \cap Q_n,~D_n=\{\xi\le \sigma a_B\}\cap Q_n,
			$
			and then we have $\mathbb{E}\left[\xi(\hat{H} \id_{C_n} + \hat{G} \id_{D_n})\right]=F(\sigma)=p$.
			
			\item If $p\notin F(\mathbb{R}_+)$, we have $\sigma>0$ such that $F(\sigma-)\le p<F(\sigma)$, and then
			$
			0<F(\sigma)-F(\sigma-)=\mathbb{E}[\xi (\hat{G}-\hat{H}) \id_{\{\xi=\sigma a_B\}\cap Q_n}].
			$
			Hence, the set $Q'\triangleq\{\xi=\sigma a_B\}\cap Q_n$ has a positive measure $\epsilon$. As $(\Omega,\mathcal{F},\mathbb{P})$ is non-atomic, its restriction on $Q'$ is also non-atomic. Using Lemma \ref{prop1}, we obtain a family of increasing measurable sets $\{Q_t'\}_{0\le t\le\epsilon}$ satisfying $\mathbb{P}[Q_t']=t$ and $Q_t'\subset Q'$. Define $F_1(t)=\mathbb{E}[\xi(\hat{G}-\hat{H}) \id_{Q_t'}]$. It is verified that $F_1$ is nondecreasing and continuous, and
			$
			F_1(0)=0,~F_1(\epsilon)=F(\sigma)-F(\sigma-)\ge F(\sigma)-p.
			$
			As such, one can find $\sigma_1$ such that $F_1(\sigma_1) = F(\sigma) - p$. In this case we define
			$
			C_n=\left(\{\xi>\sigma a_B\}\cap Q_n\right) \cup Q_{\sigma_1}', ~D_n=\left(\{\xi\le \sigma a_B\}\cap Q_n\right)\backslash Q_{\sigma_1}',
			$
			which leads to
			\begin{equation*}
				\begin{aligned}
					\mathbb{E}\left[\xi (\hat{H} \id_{C_n} + \hat{G} \id_{D_n})\right]
					&= \mathbb{E}\left[\xi \hat{H} \id_{\{\xi> \sigma a_B\}\cap Q_n}\right]+\mathbb{E}\left[\xi \hat{H} \id_{Q_{\sigma_1}'}\right]+\mathbb{E}\left[\xi \hat{G} \id_{\{\xi\le \sigma a_B\}\cap Q_n}\right]-\mathbb{E}\left[\xi \hat{G} \id_{Q_{\sigma_1}'}\right]\\
					&=F(\sigma)-F_1(\sigma_1)=p.
				\end{aligned}
			\end{equation*}
		\end{itemize}
		
		Define $\hat{X}=\hat{H}\id_{C_n}+\hat{G}\id_{D_n}$ on $Q_n$, and then \eqref{eq:budget} holds. Moreover, as $\hat{H} \leq \hat{X} \leq \hat{G}$ and $\hat{H} \leq X \leq \hat{G}$, they are bounded on $Q_n$. Hence $\tilde{U}(X,B)=a_BX+c_B$ and $\tilde{U}(\hat{X},B)=a_B\hat{X}+c_B$ are also bounded on $Q_n$. In both two cases we have
		\begin{equation*}
			\begin{aligned}
				&\mathbb{E}\left[\left(\tilde{U}\left(\hat{X},B\right)-\tilde{U}\left(X,B\right)\right)\id_{Q_n}\right]		
				=\mathbb{E}\left[a_B(\hat{G}-X)\id_{D_n}\right]-\mathbb{E}\left[a_B(X-\hat{H})\id_{C_n}\right]\\
				\ge&\mathbb{E}\left[\frac{\xi}{\sigma}(\hat{G}-X)\id_{D_n}\right]-\mathbb{E}\left[\frac{\xi}{\sigma}(X-\hat{H})\id_{C_n}\right]
				=\frac{1}{\sigma}\mathbb{E}\left[\xi\left(\hat{X}-X\right)\id_{Q_n}\right]=0,
			\end{aligned}
		\end{equation*}
		and hence \eqref{eq:ut_increase} holds.
		
		\textbf{Step 3:} For every $n$ with $\mathbb{P}[Q_n]=0$, we set $C_n=Q_n$, $D_n=\emptyset$. Define
		$
		C=\mathop{\bigcup}_{n\ge 1}C_n,
		~D=\mathop{\bigcup}_{n\ge 1}D_n.
		$
		Then $C\cup D=Q$ is a partition of $Q$. Define
		$
		\hat{X}=\hat{H} \id_C + \hat{G} \id_D + X \id_{Q^c},
		$
		which is consistent with our definition in \textbf{Step 2}.	Using   \eqref{eq:budget}, we have
		\begin{equation}\label{eq:substitution budget}
			\begin{aligned}
				\mathbb{E}\left[\xi X\right]&=\mathbb{E}\left[\xi X\id_{Q^c}\right]+\sum\limits_{n\ge1}\mathbb{E}\left[\xi X\id_{Q_n}\right]
				=\mathbb{E}\left[\xi \hat{X}\id_{Q^c}\right]+\sum\limits_{n\ge1}\mathbb{E}\left[\xi \hat{X}\id_{Q_n}\right]\\
				&=\mathbb{E}\left[\xi \hat{X}^+\id_{Q^c}\right]-\mathbb{E}\left[\xi \hat{X}^-\id_{Q^c}\right]+\sum\limits_{n\ge1}\left(\mathbb{E}\left[\xi \hat{X}^+\id_{Q_n}\right]-\mathbb{E}\left[\xi \hat{X}^-\id_{Q_n}\right]\right).
			\end{aligned}
		\end{equation}
		As $\hat{H}<X$  on $Q$, based on the definition of $\hat{H}$, for any $t$ satisfying $\hat{H} < t < X$, we have $H(t,B)=\hat{H}<t$. Based on the definition of $\hat{H}_b$, we know $\hat{H}_B\le t$. Hence $\hat{H}_B\le \hat{H}$ and we have $\hat{X}\ge \hat{H}\ge \hat{H}_B$ on $Q$. Hence
		$
		\mathbb{E}\left[\xi \hat{X}^-\id_{Q}\right]\le\mathbb{E}\left[\xi \hat{H}_B^-\id_{Q}\right]<+\infty,
		$
		and the series $\sum\limits_{n\ge1}\mathbb{E}\left[\xi \hat{X}^-\id_{Q_n}\right]$ converges. As such, \eqref{eq:substitution budget} indicates that the series  $\sum\limits_{n\ge1}\mathbb{E}\left[\xi \hat{X}^+\id_{Q_n}\right]$ also converges, and we have
		\begin{equation}\label{eq:same_budget2}
			\mathbb{E}[\xi \hat{X}]=\mathbb{E}[\xi X]=x_0.
		\end{equation}
		Moreover, based on Lemma \ref{concavification1},  we know $U(\hat{X},B)>-\infty$, and then \eqref{eq:well-defined} indicates that the expectation $\mathbb{E}\left[U(\hat{X},B)\right]$ is well-defined. Using \eqref{eq:ut_increase}, we have (noting that $X=\hat{X}$ and $\tilde{U}(X,B)=U(X,B)$ on $Q^c$)
		\begin{equation*}\label{eq_contradiction}
			\begin{aligned}
				\mathbb{E}\left[\tilde{U}\left(X,B\right)\right]&=\mathbb{E}\left[\tilde{U}\left(X,B\right)\id_{Q^c}\right]+\sum\limits_{n\ge1}\mathbb{E}\left[\tilde{U}\left(X,B\right)\id_{Q_n}\right]\\
				&\le \mathbb{E}\left[U\left(\hat{X},B\right)\id_{Q^c}\right]+\sum\limits_{n\ge1}\mathbb{E}\left[U\left(\hat{X},B\right)\id_{Q_n}\right]
				=\mathbb{E}\left[U\left(\hat{X},B\right)\right].
			\end{aligned}
		\end{equation*}
		Therefore, $\mathbb{E}\left[U\left(\hat{X},B\right)\right]\ge\mathbb{E}\left[\tilde{U}\left(X,B\right)\right]>\beta$, which contradicts to the definition \eqref{def:beta} of $\beta$.
		
		\vskip 5pt
		\textbf{Part II:} We are going to prove that the concavification problem is well-defined. For $X$ satisfying $\mathbb{E}\left[\xi X\right]\le x_0$ and $\tilde{U}\left(X,B\right)>-\infty$ (which is equivalent to $U\left(X,B\right)>-\infty$), we have that  \eqref{eq:well-defined} holds with $U$ replaced by $\tilde{U}$. Noting that $\tilde{U}\ge U$, if $\mathbb{E}\left[U\left(X,B\right)^-\right]<+\infty$, we have immediately $\mathbb{E}\left[\tilde{U}\left(X,B\right)^-\right]<+\infty$. It remains to consider the situation where $\mathbb{E}\left[U\left(X,B\right)^+\right]<+\infty$. We discuss two cases:
		\begin{itemize}
			\item If $\tilde{U}\left(X,B\right)^+=U\left(X,B\right)^+~\text{a.s.}$, then $\mathbb{E}\left[\tilde{U}\left(X,B\right)^+\right]<+\infty$.
			\item If $\mathbb{P}\left[\tilde{U}\left(X,B\right)^+>U\left(X,B\right)^+\right]>0$, let us consider the two functions $\tilde{h}=\tilde{U}\left(\cdot,b\right)^+$ and $h=U\left(\cdot,b\right)^+$ for any given $b\in E$. Based on Lemmas \ref{concavification1} and \ref{concavification2}, we have:
			\begin{itemize}
				\item[(1)] For $a,c\in\mathbb{R}$ and $a<c$, if $\tilde{h}>h$ on $(a,c)$, then $\tilde{h}$ is affine on $(a,c)$.
				\item[(2)] For $t\in\mathbb{R}$, define
				$
				\begin{aligned}
					H^*(t)=\sup\{x\le t:\tilde{h}(x)=h(x)\},~G^*(t)=\inf\{x\ge t:\tilde{h}(x)=h(x)\}.
				\end{aligned}
				$
				If $\tilde{h}(t)>h(t)$, then $G^*(t)$ and $H^*(t)\in\mathbb{R}$, and $\tilde{h}(G^*(t))=h(G^*(t))\in\mathbb{R}$, $\tilde{h}(H^*(t))=h(H^*(t))\in\mathbb{R}$, and then $H(t)\le H^*(t)<t<G^*(t)\le G(t)$, where $H(t)$ and $G(t)$ are defined in Lemma \ref{concavification1}.
			\end{itemize}
			Define
			$
			S^+=\left\{ (x,b)\in\R\times E: \tilde{U}(x,b)^+ > U(x,b)^+  \right\},~~ Q^+=\left\{\omega \in \Omega: (X(\omega),B(\omega)) \in S^+ \right\}.
			$
			Then $\mathbb{P}[Q^+]>0$. Using the above two features, we can replicate our operations in \textbf{Part I} to construct $Q_n^+=C_n^+\cup D_n^+$ and $\hat{X}=\hat{H}^*\id_{C^+}+\hat{G}^*\id_{D^+}+X\id_{Q^{+c}}$ with
			\begin{equation*}
				\hat{H}^*\triangleq H^*(X,B),~\hat{G}^*\triangleq G^*(X,B),~C^+ \triangleq \mathop{\bigcup}_{n\ge1}C_n^+,~D^+\triangleq\mathop{\bigcup}_{n\ge1}D_n^+,~Q^+=\mathop{\bigcup}_{n\ge1}Q_n^+.
			\end{equation*}
			On each $Q_n^+$ with $\mathbb{P}\left[Q_n^+\right]>0$, we have
			$
			\mathbb{E}[\xi \hat{X}\id_{Q_n^+}] = \E[\xi X\id_{Q_n^+}]
			$
			and
			\begin{equation}\label{eq:ut_increase+}
				\mathbb{E}\left[\tilde{U}(X,B)^+\id_{Q_n^+}\right]\le\mathbb{E}\left[\tilde{U}(\hat{X},B)^+\id_{Q_n^+}\right]=\mathbb{E}\left[U(\hat{X},B)^+\id_{Q_n^+}\right].
			\end{equation}
			As discussed in \textbf{Step 3}, we have $\mathbb{E}\left[\xi \hat{X}\right]=\mathbb{E}\left[\xi X\right]\le x_0$.
			Using \eqref{eq:well-defined}, we have
			\begin{equation*}
				\mathbb{E}\left[U(\hat{X},B)^+\right]<+\infty \text{ or }\mathbb{E}\left[U(\hat{X},B)^-\right]<+\infty.
			\end{equation*}
			If $\mathbb{E}\left[U(\hat{X},B)^+\right]<+\infty$, then, using \eqref{eq:ut_increase+}, we have $\mathbb{E}\left[\tilde{U}(X,B)^+\right]<+\infty$.
			
			\noindent
			If $\mathbb{E}\left[U(\hat{X},B)^-\right]<+\infty$, then as $\tilde{U}\ge U$, we have $\mathbb{E}\left[\tilde{U}(\hat{X},B)^-\right]<+\infty$.\\ Noting that on $C^+\cup D^+=Q^+$, we have $\tilde{U}\left(X,B\right)^+>U\left(X,B\right)^+\ge0$. Hence $\tilde{U}\left(X,B\right)^-=0$, and we have
			$
			\mathbb{E}\left[\tilde{U}\left(X,B\right)^-\id_{Q^+}\right]=0,~
			\mathbb{E}\left[\tilde{U}\left(X,B\right)^-\id_{Q^{+c}}\right]=\mathbb{E}\left[\tilde{U}\left(\hat{X},B\right)^-\id_{Q^{+c}}\right]<+\infty,
			$
			which indicates that $\mathbb{E}\left[\tilde{U}(X,B)^-\right]<+\infty$.
			Therefore, the concavification problem is well-defined.
		\end{itemize}		
		\item
		Suppose that $X^*$ is a finite optimal solution of Problem \eqref{problem1}. We have
		\begin{equation*}
			\mathbb{E}\left[\tilde{U}\left(X^*,B\right)\right]\ge\mathbb{E}\left[U\left(X^*,B\right)\right]=\sup_{X: \mathbb{E}[\xi X]=x_0\atop U(X,B)>-\infty}\mathbb{E}\left[U(X,B)\right]=\sup_{X: \mathbb{E}[\xi X]=x_0\atop \tilde{U}(X,B)>-\infty}\mathbb{E}\left[\tilde{U}(X,B)\right].
		\end{equation*}
		As such, $X^*$ is also an optimal solution for $\sup_{X: \mathbb{E}[\xi X]=x_0\atop \tilde{U}(X,B)>-\infty}\mathbb{E}\left[\tilde{U}(X,B)\right]$, and
		$$
		\mathbb{E}\left[\tilde{U}(X^*,B)\right]=\mathbb{E}[U(X^*,B)]\in\mathbb{R},
		$$
		that is, $\tilde{U}(X^*,B)=U(X^*,B)$ a.s.. Applying Theorem \ref{thm_concave}, $X^*\in\mathcal{X}_B^{\tilde{U}}(\lambda\xi)$ for some $\lambda\ge0$.
		\begin{itemize}
			\item
			When $\lambda<+\infty$, we obtain $X^*\in\mathcal{X}_B^U(\lambda\xi)$ from
			\begin{equation*}
				U(X^*,B)-\lambda\xi X^*=\tilde{U}(X^*,B)-\lambda\xi X^*\ge\tilde{U}(x,B)-\lambda\xi x\ge U(x,B)-\lambda\xi x,~\forall x\in\mathbb{R}.
			\end{equation*}
			\item
			When $\lambda=+\infty$, we have $X^*=\underline{x}_B^{\tilde{U}}=\underline{x}_B^{U}$ (based on Lemma \ref{concavification1}). Hence, $X^*\in\mathcal{X}_B^U(\lambda\xi)$.
		\end{itemize}
		Conversely, if there exists $X^*\in\mathcal{X}_B^U(\lambda\xi)$ satisfying $\mathbb{E}[\xi X^*]=x_0$ and $U(X^*,B)>-\infty$, then:
		\begin{itemize}
			\item If $\lambda<+\infty$.
			For any other $X$  satisfying the budget constraint and $U(X,B)>-\infty$, we have
			\begin{equation*}
				U(X^*,B)-\lambda\xi X^*\ge U(X,B)-\lambda\xi X>-\infty.
			\end{equation*}
			Taking the expectation on both sides, we obtain $\mathbb{E}[U(X^*,B)]\ge\mathbb{E}[U(X,B)]$, as such,
			$$
			\mathbb{E}[U(X^*,B)] \ge \sup_{X: \E[\xi X] = x_0\atop U(X,B)>-\infty}\mathbb{E}[U(X,B)] \geq \mathbb{E}[U(X^*,B)].
			$$
			Thus, $X^*$ is an optimal solution of Problem \eqref{problem1}.
			\item If $\lambda=+\infty$, then $X^*=\underline{x}_B$. As $U(X,B)>-\infty$ requires $X\ge\underline{x}_B=X^*$, we have $\mathbb{E}\left[\xi X\right]\ge\mathbb{E}\left[\xi X^*\right]=x_0$. This indicates that $X^*$ is the unique random variable satisfying the constraint in Problem \eqref{problem1}, and hence $X^*$ is the optimal solution.
		\end{itemize}
		Note that in this part we do not need the non-atomic condition. The proof is also valid for the ``if'' part in Theorem \ref{thm_concave}.
	\end{enumerate}
\end{proof}
\begin{remark}\label{rmk:HnGn}
	For the weaker case with only $H_n$ and $G_n$ being finite, we should take some $m\in\mathbb{N}_+$ in \textbf{Part I} with 	$\mathbb{E}[\tilde{U}(X,B)]>\beta+\frac{1}{m}.$ And then we make our operation on the set $S=\big\{ (x,b)\in\R\times E: \tilde{U}(x,b) > U(x,b)+\frac{1}{m}  \big\}$ and replace $X$ by $H_{2m}(X,B)$ and $G_{2m}(X,B)$.
\end{remark}

\section{Proofs in Section \ref{section_finite}}

\subsection{Proof of Lemma \ref{lem}}\label{B}
To prove Lemma \ref{lem}, we need:
\begin{lemma}\label{concavification_appendix}
	Suppose that $u:\mathbb{R}\to\mathbb{R}\cup\{-\infty\}$ proper with a concavification $\tilde{u}:\mathbb{R}\to\mathbb{R}\cup\{-\infty\}$. Then for $y\in(0,+\infty)$ we have $\mathcal{X}^u(y)\subset\mathcal{X}^{\tilde{u}}(y)$, where the term is defined as in \eqref{conjugate point} with $b$ omitted.
\end{lemma}
\begin{proof}[Proof of Lemma \ref{concavification_appendix}.]
	Denote by $f^*(y)$ the convex conjugate of $f$, i.e.
	$
	f^*(y)=\sup_{x\in\mathbb{R}}\{xy-f(x)\}.
	$
	Then we have $\tilde{u}=-(-u)^{**}$ (cf. \citet{R1970}). Using the Fenchel-Moreau Theorem, we know
	\begin{equation*}
		V(y)\triangleq\sup_{x\in\mathbb{R}}\{\tilde{u}(x)-xy\}=\sup_{x\in\mathbb{R}}\{x(-y)-(-u)^{**}(x)\}=(-u)^{***}(-y)=(-u)^*(-y)=\sup_{x\in\mathbb{R}}\{u(x)-xy\}.
	\end{equation*}
	For $x\in\mathcal{X}^u(y)$, we consider three cases:
	\begin{enumerate}[(i)]
		\item $x\in\mathbb{R}$.
		In this case, we have
		$
		\tilde{u}(x)-xy\ge u(x)-xy=\sup_{t\in\mathbb{R}}\{u(t)-ty\}=\sup_{t\in\mathbb{R}}\{\tilde{u}(t)-ty\}.
		$
		Hence $x\in\mathcal{X}^{\tilde{u}}(y)$.
		\item $x=+\infty$. Then there exists $x_n\uparrow+\infty$ with $u(x_n)-x_ny\to \sup_{x\in\mathbb{R}}\{u(x)-xy\}=V(y)$.
		As $V(y)\ge\tilde{u}(x_n)-x_ny\ge u(x_n)-x_ny$, we know $\tilde{u}(x_n)-x_ny\to V(y)$. Hence $+\infty\in\mathcal{X}^{\tilde{u}}(y)$.
		\item $x=-\infty$. The proof is similar as in (ii).
	\end{enumerate}
\end{proof}
\begin{proof}[Proof of Lemma \ref{lem}.]
	As in this lemma, only (iii) is relevant to $b$. For simplicity,  we will first prove (i)(ii)(iv)(v) omitting the notation of $b$, and then prove (iii). Also, as terms in the lemma may equal $\pm\infty$, we will prove by contradiction.
	
	(i) Suppose $x^*\triangleq\underline{X}(y)<\underline{x}$ for some $y\in\[0,+\infty\]$, then $\underline{x}>-\infty$. As $\mathcal{X}(+\infty)=\{\underline{x}\}$, we know $y<+\infty$. Hence there exists a sequence of real numbers $\{x_n\}\downarrow x^*$ with $U(x_n)-x_ny\to V(y)$. However, for $x_n$ closed enough to $x^*$ we have $x_n<\underline{x}$, which means $U(x_n)=-\infty$. Hence $V(y)=-\infty$, which contradicts to Assumption \ref{ass_standing}.
	
	Suppose $x^{**}\triangleq\overline{X}(y)>\overline{x}$ for some $y\in(0,+\infty]$, then $\overline{x}<+\infty$. As $\overline{x}\ge\underline{x}$, we know $y<+\infty$. Hence there exists a sequence of real numbers $\{x_n\}\uparrow x^{**}$ with $U(x_n)-x_ny\to V(y)$. However, for $x_n$ closed enough to $x^{**}$ we have $x_n>\overline{x}$, which means $U(x_n)=U(\overline{x})$. As such, $U(x_n)-x_ny=U(\overline{x})-x_ny\to U(\overline{x})-x^{**}y$. That is, $V(y)=U(\overline{x})-x^{**}y<U(\overline{x})-\overline{x}y$, which contradicts to the definition \eqref{conjugate} of $V$.

	\vskip 5pt	
	(ii) \textbf{Step 1:} We prove $x^*\triangleq\underline{X}(y)\in\mathcal{X}(y)$.
	
	For $y=+\infty$, we have $\mathcal{X}(y)=\{\underline{x}\}$. Hence $\underline{X}(y)=\overline{X}(y)=\underline{x}\in\mathcal{X}(y)$.
	
	For $y\in[0,+\infty)$, we have a sequence of real numbers $\{x_n\}\downarrow$ (or $\uparrow$) $x^*$ with $U(x_n)-x_ny\to V(y)$. If $x^*\in\{\pm\infty\}$, then by the definition \eqref{conjugate point} of $\mathcal{X}(y)$ we know $x^*\in\mathcal{X}(y)$. If $x^*\in\mathbb{R}$, then it follows from the upper semicontinuity of $U$ that
	$
	V(y)=\lim_{n\to+\infty}U(x_n)-x^*y\le U(x^*)-x^*y.
	$
	Based on the definition \eqref{conjugate} of $V$, we know $V(y)=U(x^*)-x^*y$, i.e. $x^*\in\mathcal{X}(y)$. Similarly, we have $\overline{X}(y)\in\mathcal{X}(y)$.
	
	\textbf{Step 2:} We prove $\overline{X}(y_2)\le\underline{X}(y_1)$ for $0\le y_1<y_2\le+\infty$.
	
	If $y_2=+\infty$, then $\overline{X}(y_2)=\underline{x}\le\underline{X}(y_1)$ (using (i) above).
	
	If $y_2<+\infty$, suppose that we have $0\le y_1<y_2$ such that $x^{**}\triangleq\overline{X}(y_2)>x^*\triangleq\underline{X}(y_1)$. We have a sequence of real numbers $\{x_n\}\downarrow$ (or $\uparrow$) $x^*$ with $U(x_n)-x_ny_1\to V(y_1)$, and a sequence of real numbers $\{x_n'\}\downarrow$ (or $\uparrow$) $x^{**}$ with $U(x_n')-x_n'y_2\to V(y_2)$, and we can assume 
	\begin{equation}\label{eq:monotonic}
		\left\{
		\begin{aligned}
			&U(x_n)-x_ny_1> V(y_1)-1/n\ge U(x_n')-x_n'y_1-1/n,\\
			&U(x_n')-x_n'y_2> V(y_2)-1/n\ge U(x_n)-x_ny_2-1/n.
		\end{aligned}
		\right.
	\end{equation}
	As $x^{**}>x^*\ge\underline{x}$, for $n$ sufficiently large we have $x_n'>\underline{x}$ and $U(x_n')\in\mathbb{R}$. Then the first inequality in \eqref{eq:monotonic} indicates $U(x_n)\in\mathbb{R}$, and we can add two inequalities in \eqref{eq:monotonic} to derive
	$
	(x_n-x_n')(y_2-y_1)\ge\frac{2}{n}.
	$
	Letting $n\to+\infty$, as $x_n-x_n'\to x^{*}-x^{**}<0$, we obtain a contradiction.
	
	\vskip 5pt
	(iii) Based on (ii), for $0\le y_1<y_2\le+\infty$, we have $\underline{X}(y_2)\le \overline{X}(y_2)\le \underline{X}(y_1)\le \overline{X}(y_1)$.
	The proof of the measurability will be given at the last of the proof.
	\vskip 5pt	
	(iv)  Because of the nonincreasing property just shown in (iii), both two limits exist (including the limit to infinity).
	
	\textbf{Step 1:} We prove $l\triangleq\lim\limits_{y\to 0+}\overline{X}(y)=\overline{x}$.
	
	If $l\in\mathbb{R}$, then there exist real numbers $y_n\downarrow0$ and $x_n\uparrow l$ such that $\overline{X}(y_n)=x_n\in\mathcal{X}(y_n)$. As such,
	\begin{equation}\label{ineq4}
		\begin{aligned}
			U(l)-x_ny_n\ge U(x_n)-x_ny_n\ge U(x)-xy_n, \text{ for any } x\in\mathbb{R}.
		\end{aligned}
	\end{equation}
	Letting  $n\to +\infty$, (\ref{ineq4}) leads to $U(l)\ge U(x)$, and we have $\overline{x}\le l$. Using (i) we know $l=\overline{x}$.
	\vskip 5pt	
	If $l=+\infty$, it follows from (i) that $+\infty\le \overline{x}$, and hence $\overline{x}_b=+\infty=l$.
	
	If $l=-\infty$, then for any $y\in(0,+\infty)$ we have $\overline{X}(y)=-\infty$. Using Lemma \ref{concavification_appendix}, we know $-\infty\in\mathcal{X}^U(y)\subset\mathcal{X}^{\tilde{U}}(y)$. As $\tilde{U}$ is concave, we have $\tilde{U}'(-\infty)\le y$ for every $y>0$. Noting that $\tilde{U}$ is nondecreasing, $\tilde{U}'(-\infty)\ge0$, we derive $\tilde{U}'(-\infty)=0$. This indicates that $\tilde{U}$ is constant on $\mathbb{R}$. Based on the assumption that $U\in\mathcal{H}$, we know that $U$ is also constant on $\mathbb{R}$. Therefore, $\underline{x}=\overline{x}=-\infty=l$.
	
	Similarly, we can prove $\lim\limits_{y\to 0+}\underline{X}(y)=\overline{x}$.
	\vskip 5pt
	\textbf{Step 2:} We prove $L\triangleq\lim\limits_{y\to +\infty}\overline{X}(y)=\underline{x}$. Suppose that $L>\underline{x}$, then $L>-\infty$ and $U(L-)>-\infty$.
	
	If $L\in\mathbb{R}$, there exist real numbers $y_n\uparrow+\infty$ and $x_n\downarrow L$ such that $\overline{X}(y_n)=x_n$. For any $x\in\mathbb{R}$,
	\begin{equation}\label{ineq6}
		\begin{aligned}
			U(x_n)-x_ny_n\ge U(x)-xy_n .
		\end{aligned}
	\end{equation}
	For $v>0$, we take $x=x_n-\frac{v}{y_n}>\underline{x}$ for some large $n$, and (\ref{ineq6}) yields
	$
	\begin{aligned}
		U(x_n)\ge U(x_n-\frac{v}{y_n})+v.
	\end{aligned}
	$
	Letting  $n\to+\infty$, we obtain
	$
	\begin{aligned}
		U(L)\ge U(L-)+v,
	\end{aligned}
	$
	which is a contradiction as $v$ is arbitrary. Thus $L$ cannot be larger than $\underline{x}$. As $L\ge\underline{x}$ we know $L=\underline{x}$.
	\vskip 5pt	
	If $L=+\infty$, then for any $y\in[0,+\infty)$, we have $\overline{X}(y)=+\infty\in\mathcal{X}^{\tilde{U}}(y)$. As such, $\tilde{U}'(+\infty)\ge y$ for every $y<+\infty$, which contradicts to the fact that $\tilde{U}$ is concave.
	
	In conclusion, $L=\underline{x}$. Similarly, we can prove $\lim\limits_{y\to +\infty}\underline{X}(y)=\underline{x}$.
	\vskip 5pt	
	(v) \textbf{Step 1:} We prove $l\triangleq\lim\limits_{y\to y_0+}\overline{X}(y)=\underline{X}(y_0)$. Suppose $l<\underline{X}(y_0)$, which implies $l<+\infty$.
	
	If $l\in\mathbb{R}$, again we have real numbers $y_n\downarrow y_0$ as well as $x_n=\overline{X}(y_n)\uparrow l$ satisfying (\ref{ineq4}). When $n$ tends to infinity, the inequality shows that for any $x\in\mathbb{R}$,
	$
	\begin{aligned}
		U(l)-ly_0\ge U(x)-xy_0 ,
	\end{aligned}
	$
	which means $l\in \mathcal{X}(y_0)$. As such, $l\ge \underline{X}(y_0)$, which is a contradiction.
	
	If $l=-\infty$, then for any $y\in(y_0,+\infty)$ we have $\overline{X}(y)=-\infty$. As we have discussed in (iv), this indicates $\tilde{U}'(-\infty)\le y$ for any $y>y_0$, i.e., $\tilde{U}'(-\infty)\le y_0$.
	Let us discuss several cases.
	
	(a) If $l'\triangleq\underline{X}(y_0)\in\mathbb{R}$, and for any $x<l'$, we have $U(x)=\tilde{U}(x)$.
	
	In this case, for $x<l'$ we have
	$
	U(x)-xy_0=\tilde{U}(x)-xy_0\ge\tilde{U}(l')-l'y_0\ge U(l')-l'y_0=V(y_0),
	$
	which means $x\in\mathcal{X}(y_0)$. This contradicts to the definition of $\underline{X}(y_0)$ and the fact $x<l'$.
	
	(b) If $l'\in\mathbb{R}$, and there exists $x<l'$ such that $U(x)<\tilde{U}(x)$.
	
	As $U\in\mathcal{H}$, there exists a nonincreasing sequence $\{x_k'\}$ with $x_k'\le x$ and $\tilde{U}(x_k')<U(x_k')+\frac{1}{k}$. As such, for $l'> x\ge x_k'$, using the fact $\tilde{U}'(-\infty)\le y_0$, we have
	\begin{equation*}
		U(x_k')-x_k'y_0+\frac{1}{k}>\tilde{U}(x_k')-x_k'y_0\ge\tilde{U}(l')-l'y_0\ge U(l')-l'y_0=V(y_0).
	\end{equation*}
	Letting $k\to+\infty$, we know $\lim\limits_{k\to+\infty}x_k'\in\mathcal{X}(y_0)$, which is also a contradiction because $\lim\limits_{k\to+\infty}x_k'<l'$.
	
	(c) If $l'=+\infty$.
	In this case we have $x_k''$ increasing to $+\infty$ with $U(x_k'')-x_k''y_0>V(y_0)-\frac{1}{k}$. Fix $x\in\mathbb{R}$, using the nonincreasing sequence $\{x_k'\}$ in (b), we derive for $k$ sufficiently large that
	\begin{equation*}
		U(x_k')-x_k'y_0+\frac{1}{k}>\tilde{U}(x_k')-x_k'y_0\ge\tilde{U}(x_k'')-x_k''y_0\ge U(x_k'')-x_k''y_0>V(y_0)-\frac{1}{k}.
	\end{equation*}
	Letting $k\to+\infty$, we know $\lim\limits_{k\to+\infty}x_k'\in\mathcal{X}(y_0)$, which is also a contradiction because $\lim\limits_{k\to+\infty}x_k'\le x<l'$.
	
	Concluding (a)-(c), we have proved $l'=-\infty=l$.
	\vskip 5pt
	\textbf{Step 2:} We prove $L\triangleq\lim\limits_{y\to y_0-}\underline{X}(y)=\overline{X}(y_0)$. Suppose $L>\overline{X}(y_0)$, then $L>-\infty$.
	
	If $L\in\mathbb{R}$, the proof is all the same as in \textbf{Step 1}.
	
	If $L=+\infty$, then for any $y\in[0,y_0)$ we have $\underline{X}(y)=+\infty$. As we have discussed in (iv), this indicates $\tilde{U}'(+\infty)\ge y$ for any $y\in[0,y_0)$, i.e., $\tilde{U}'(+\infty)\ge y_0$. Using the same method in \textbf{Step 1}, we can prove $L'\triangleq\overline{X}(y_0)=+\infty=L$.
	
	\vskip 5pt	
	(iii) At last, we prove the measurability of $V_b(y),~\underline{X}_b(y)$ and $\overline{X}_b(y)$.
	
	\textbf{Step 1:} We prove that $V_b(y)$ is measurable.
	
	Note that $U(\cdot,b)$ is nondecreasing. For $t\in\mathbb{R}$, $V_b(y)\le t$ is equivalent to $    U(x,b)-xy\le t$ for all $x\in\mathbb{Q}$. That is,
	$
	(y,b)\in\mathop{\bigcap}\limits_{x\in\mathbb{Q}}S_x,
	$
	where $S_x=\left\{(y,b)\in[0,+\infty)\times E:U(x,b)-xy\le t\right\}.$ For given $x$, as $U(x,b)$ is a measurable function of $b$, we know that $U(x,b)-xy$ is a measurable function of $(y,b)$ and $S_x\in\mathcal{B}\left([0,+\infty)\right)\times\mathcal{E}$. Therefore, we obtain $\mathop{\cap}\limits_{x\in\mathbb{Q}}S_x\in\mathcal{B}\left([0,+\infty)\right)\times\mathcal{E}$, which leads to the measurability of $V_b(y)$ in $(y,b)$.
	
	\textbf{Step 2:} We prove that $\underline{X}_b(y)$ is measurable. Based on the definition of $\underline{X}_b(y)$,	 for $(y,b)\in[0,+\infty)\times E$ and $t\in\mathbb{R}$ we have:
	\begin{equation}\label{eq:equivalance}
		\underline{X}_b(y)> t\iff~\left\{
		\begin{aligned}
			&\exists~n\in\mathbb{N}_+~s.t.~U(x,b)-xy\le V_b(y)-\frac{1}{n}\text{ holds for all }x\le t,~&\text{if }V_b(y)<+\infty;\\
			&\exists~n\in\mathbb{N}_+~s.t.~U(x,b)-xy\le n\text{ holds for all }x\le t,~&\text{if }V_b(y)=+\infty;\\
		\end{aligned}\right.
	\end{equation}
	%
	Noting that in the definition of $\underline{X}_b(y)$, $y$ is allowed to be $+\infty$. As such, $\underline{X}_b(y)> t$ is equivalent to
	\begin{equation*}
		(y,b)\in\bigg(\mathop{\bigcup}_{n\in\mathbb{N_+}}\mathop{\bigcap}\limits_{x\in\mathbb{Q}\cup\{t\}\atop x\le t}\widetilde{S}_x^n\bigg)\bigcup\bigg(\mathop{\bigcup}_{n\in\mathbb{N_+}}\mathop{\bigcap}\limits_{x\in\mathbb{Q}\cup\{t\}\atop x\le t}\widetilde{T}_x^n\bigg)\bigcup W,
	\end{equation*}
	where
	\begin{equation*}
		\left\{\begin{aligned}
			&\widetilde{S}_x^n\triangleq\left\{(y,b)\in[0,+\infty)\times E:U(x,b)-xy\le V_b(y)-\frac{1}{n}\right\}\cap\{(y,b)\in[0,+\infty)\times E:V_b(y)<+\infty\},\\
			&\widetilde{T}_x^n\triangleq\left\{(y,b)\in[0,+\infty)\times E:U(x,b)-xy\le n\right\}\cap\{(y,b)\in[0,+\infty)\times E:V_b(y)=+\infty\},\\
			&W=\{+\infty\}\times\{b\in E:\underline{x}_b>t\}.
		\end{aligned}\right.
	\end{equation*}
	Based on the measurability of $V_b(y)$, we know that $U(x,b)-xy-V_b(y)$ is measurable in $(y,b)$ for given $x$. Hence $\widetilde{S}_x^n\in\mathcal{B}\left([0,+\infty)\right)\times\mathcal{E}\subset\mathcal{B}\left([0,+\infty]\right)\times\mathcal{E}$. Similarly, we have $\widetilde{T}_x^n\in\mathcal{B}\left([0,+\infty]\right)\times\mathcal{E}$. We consider $W$. As $\underline{x}_b>t$ is equivalent to
	$
	\exists~ n\in\mathbb{N}_+~s.t.~U(x,b)=-\infty,~\forall x\in\mathbb{Q}\cap(-\infty,t+\frac{1}{n}].
	$
	Then
	\begin{equation*}
		\underline{x}_b>t\iff b\in\mathop{\bigcup}_{n\in\mathbb{N}_+}\mathop{\bigcap}\limits_{x\in\mathbb{Q}\atop x\le t+\frac{1}{n}}\{b\in E:U(x,b)=-\infty\}.
	\end{equation*}
	As $U(x,\cdot)$ is measurable on $(E,\mathcal{E})$, we know $\{b\in E:\underline{x}_b>t\}\in\mathcal{E}$. Hence $W\in\mathcal{B}\left([0,+\infty]\right)\times\mathcal{E}$. Therefore, we know $\left\{(y,b)\in[0,+\infty]\times E:\underline{X}_b(y)>t\right\}\in\mathcal{B}\left([0,+\infty]\right)\times\mathcal{E}$, and $\underline{X}_b(y)$ is measurable.
	
	Using the same method, one can prove the measurability of $\overline{X}_b(y)$.
\end{proof}	
\subsection{Proof of Proposition \ref{thm_feasible}}
\begin{proof}[Proof of Proposition \ref{thm_feasible}.]
	To start with, it is clear that
	\begin{itemize}
		\item If $x_0<\mathbb{E}[\xi \underline{x}_B]$, then $x_0\notin I$;
		\item If $x_0=\mathbb{E}[\xi \underline{x}_B]$, $X=\underline{x}_B$ is the only possible feasible solution.
		\item If $x_0\ge\mathbb{E}[\xi \overline{x}_B]$, Problem \eqref{problem1} admits a bliss solution $X\ge\overline{x}_B$.
	\end{itemize}
	The main difficulty appears in the case $\E[\xi \underline{x}_B] < x_0 < \E[\xi \overline{x}_B]$.  We first prove that the set $I$ is a connected interval, i.e., if $x\in I$, then $[x,+\infty)\subset I$. For $x\in I$, suppose $X$ is a feasible solution with respect to the initial value $x$. Setting $X'=X+c \id_{\{\xi<n\}}$ for some positive constants $c$ and $n$, one can prove that any $x'>x$ is also an element of $I$.
	
	Next, we proceed to prove that if $x \in I$ and $x>\mathbb{E}[\xi \underline{x}_B]$, then there exists $x_1<x$ such that $[x_1,+\infty)\subset I$. Suppose that $\mathbb{E}[\xi\underline{x}_B]<x=\mathbb{E}[\xi X]$ for some feasible solution $X$. Then we have the set $A_1=\{X>\underline{x}_B\}\cap\{\xi<N\}$ has a positive probability for some $N>0$. Noting that $\Delta_x U(X,B)\triangleq U(X+,B)-U(X-,B)$ is finite and nonnegative on $A_1$, we have $A_2=\{\Delta_x U(X,B)<N_1\}\cap\{\xi<N\}$ with $\mathbb{P}[A_2]>0$ for some $N_1>0$. As $U(\cdot,B)$ is nondecreasing and right-continuous, we rewrite $A_2$ as
	\begin{equation*}\small
		A_2=\Bigg(\mathop{\bigcup}\limits_{n\in\mathbb{N}_+}\bigg\{U(X,B)-U(X-\frac{1}{n},B)<N_1\bigg\}\Bigg)\bigcap\{\xi<N\}.
	\end{equation*}
	Therefore, there exists $N_2>0$ such that $A
	\triangleq\big\{U(X,B)-U(X-\frac{1}{N_2},B)<N_1\big\}\cap\{\xi<N\}$ has positive probability. Define $X'=X-\frac{1}{N_2}\id_A$. Based on the definition of $A$, we know $X'\ge \underline{x}_B$, and $U(X',B) > U(X,B) - N_1$. As such, $\mathbb{E}[U(X', B)] > \mathbb{E}[U(X,B)] - N_1 > -\infty$. Further, $\mathbb{E}[\xi X']=\mathbb{E}[\xi X] - \frac{1}{N_2} \mathbb{E}[\xi \id_A]$ is a finite number $x_1<x$ with $x_1\in I$, which also indicates $[x_1, +\infty)\subset I$.
	
	As a result, we have either $I=[\mathbb{E}[\xi \underline{x}_B],+\infty)$ or $I=(\tilde{x},+\infty)$ for some $\tilde{x}\ge\mathbb{E}[\xi \underline{x}_B]$, where $\tilde{x}$ is allowed to take value in $\{\pm\infty\}$.
\end{proof}

\subsection{Proof of Theorem \ref{main}}
We prove Theorem \ref{main} with Assumptions \ref{ass_standing} replaced by Assumptions \ref{ass_G} and \ref{asmp:Hn}.
The proof of Theorem \ref{main} requires the following lemma:
\begin{lemma}\label{lemma1}
	For any finite random variable $Y$ in a non-atomic probability space $(\Omega,\mathcal{F},\mathbb{P})$ with $\mathbb{E}[Y^+]=+\infty$ and any  $a\ge0$, there exists an event $A\in\mathcal{F}$ such that $\mathbb{E}[Y \id_A]=a$.
\end{lemma}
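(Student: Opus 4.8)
The plan is to reduce to a bounded truncation of the positive part of $Y$ and then combine Lemma~\ref{prop1} with the intermediate value theorem. Since $\E[Y^+]=+\infty$, monotone convergence gives $\E[Y\id_{\{0<Y\le n\}}]\uparrow\E[Y^+]=+\infty$, so I would first fix $N\in\N_+$ with $\E[Y\id_{\{0<Y\le N\}}]>a$ and set $B=\{0<Y\le N\}$. Note $\mathbb{P}[B]>0$ (otherwise that expectation would be zero), and on $B$ the variable $Y$ is bounded by $N$, so all expectations below are finite.

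Next I would restrict $\mathbb{P}$ to $B$, i.e. work with $\mu(\cdot)\triangleq\mathbb{P}[\,\cdot\,\cap B]$, which is a finite non-atomic measure with $\mu(\Omega)=\mathbb{P}[B]$: indeed, if $\mu(A)>0$ then $\mathbb{P}[A\cap B]>0$, and non-atomicity of $\mathbb{P}$ produces $C\subset A\cap B$ with $\mu(A)>\mu(C)>0$. Lemma~\ref{prop1} then yields an increasing family $\{A_t\}_{0\le t\le\mathbb{P}[B]}$ of measurable sets with $\mu(A_t)=t$; writing $B_t\triangleq A_t\cap B$, the map $t\mapsto B_t$ is increasing with $B_t\subset B$ and $\mathbb{P}[B_t]=t$.

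Finally I would examine $\phi(t)\triangleq\E[Y\id_{B_t}]$ on $[0,\mathbb{P}[B]]$. It is nondecreasing because $Y>0$ on $B$, and $|\phi(t)-\phi(s)|=\E[Y\id_{B_t\setminus B_s}]\le N(t-s)$ for $s<t$, so $\phi$ is $N$-Lipschitz, hence continuous, with $\phi(0)=0$ and $\phi(\mathbb{P}[B])=\E[Y\id_B]>a\ge0$. The intermediate value theorem then supplies $t^*\in[0,\mathbb{P}[B])$ with $\phi(t^*)=a$, and $A\triangleq B_{t^*}$ is the desired event (being a subset of $B$, $Y\id_A$ is bounded, so $\E[Y\id_A]=a$ is an honest real number). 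The only step needing slight care is the reduction — checking that the restriction of $\mathbb{P}$ to $B$ is still non-atomic so that Lemma~\ref{prop1} applies; everything after that is the Lipschitz bookkeeping for $\phi$ and a one-line appeal to the intermediate value theorem.
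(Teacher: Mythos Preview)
Your proof is correct and follows essentially the same route as the paper's: truncate to a bounded positive part of $Y$ so the target value $a$ is exceeded, invoke Lemma~\ref{prop1} (Sierpi\'nski) to obtain an increasing family of sets, and apply the intermediate value theorem to the resulting continuous function. The only cosmetic differences are that the paper first reduces to $Y\ge 0$ and uses the set $\{Y<t_1\}$, applying Lemma~\ref{prop1} on the full space and intersecting afterward, whereas you work directly on $B=\{0<Y\le N\}$ with the restricted measure and make the Lipschitz bound explicit; neither variation affects the argument.
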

\begin{proof}[Proof of Lemma \ref{lemma1}.]
	It suffices to find $A\in\mathcal{F}$ such that $\mathbb{E}[Y^+ \id_A]=a$. Hence we only need to consider the case that $Y\ge0$. Define $F_1(t)=\mathbb{E}[Y \id_{\{Y<t\}}]$. $F_1$ is nondecreasing and left-continuous with $F_1(+\infty)=+\infty$. Therefore, we find $t_1$ such that $F_1(t_1)>a$. Noting that $(\Omega,\mathcal{F},\mathbb{P})$ is non-atomic, based on Lemma \ref{prop1}, we get a family of increasing measurable sets  $\{A_t\}_{0\le t\le 1}$ with $\mathbb{P}[A_t]=t$. Define $F_2(t)=\mathbb{E}[Y \id_{A_t\cap\{Y<t_1\}}]$. $F_2$ is continuous with $F_2(0)=0$ and $F_2(1)=F_1(t_1)>a$. As such, we find $t_2 \in (0, t_1)$ satisfying $F_2(t_2)=\mathbb{E}[Y \id_{A_{t_2}\cap\{Y<t_1\}}]=a$. Thus, $A = A_{t_2}\cap\{Y<t_1\} \in \F$ is the desired set.
\end{proof}

\begin{proof}[Proof of Theorem \ref{main}.]
	\begin{enumerate}[(1)]
		\item As $x_0 > \mathbb{E}[\xi\underline{x}_B]$ and $x_0 \in I$, based on Proposition \ref{thm_feasible}, there admits a feasible solution $X'$ for Problem (\ref{problem1}) with initial value $x_1 < x_0$.
		
		For any $\lambda>0$, as $\mathbb{E}[\xi\underline{X}_B(\lambda\xi)]=+\infty$, we know that  $\mathbb{E}\left[\xi \left(\underline{X}_B\left(\lambda\xi\right)-X'\right)\right]=+\infty$. Using Lemma \ref{lemma1}, we have $A\in\mathcal{F}$ satisfying $\mathbb{E}[\xi (\underline{X}_B(\lambda\xi)-X')\id_A]=x_0-x_1$. Define $X\triangleq\underline{X}_B(\lambda\xi)\id_A+X'\id_{A^c}$, then
		$
		\mathbb{E}[\xi X]=\mathbb{E}[\xi \underline{X}_B(\lambda\xi)\id_A]+x_1-\mathbb{E}[\xi X'\id_A]=x_0,
		$
		and
		$$
		\begin{aligned}
			\mathbb{E}[U(X,B)]=\mathbb{E}[U(\underline{X}_B(\lambda\xi),B)\id_A]+\mathbb{E}[U(X',B)\id_{A^c}]
			& \ge \mathbb{E}[U(X',B)+\lambda\xi (\underline{X}_B(\lambda\xi)-X')\id_A]\\
			& = \mathbb{E}[U(X',B)]+\lambda(x_0-x_1).
		\end{aligned}
		$$
		Letting $\lambda\to+\infty$, it follows that Problem \eqref{problem1} is infinite.
		\item
		\begin{enumerate}
			\item
			For
			$X^* =\underline{X}_B\big(\lambda^*\xi\big)\in\mathcal{X}_B\big(\lambda^*\xi\big)$ and any solution $X$ satisfying $\E[\xi X] = x_0=\E[\xi X^*]$,
			we have $X^*>-\infty~\text{a.s.}$. Based on the definition of $\mathcal{X}_b(y)$ in \eqref{conjugate point}, we know $U(X^*,B)>-\infty~\text{a.s.}$, and
			\begin{equation}\label{ineq0}
				\begin{aligned}
					U(X^*,B)-\lambda^*\xi X^*\ge U(X,B)-\lambda^*\xi X.
				\end{aligned}
			\end{equation}
			As such,
			\begin{equation}\label{ineq1}
				\begin{aligned}
					\E\big[U(X^*,B)\big]\ge \E\big[U(X,B)\big],
				\end{aligned}
			\end{equation}
			and the optimality of $X^*$ then follows. As $x_0\in I$, we can take $X$ in \eqref{ineq1} as a feasible solution, and then we know that the left side is not $-\infty$. When Assumption \ref{ass_finite} holds, (\ref{ineq1}) takes ``=" if and only if (\ref{ineq0}) takes ``=" almost surely, we have $X\in\mathcal{X}_B(\lambda^*\xi)$ happens almost surely. Based on the definition of $\underline{X}_b$, we know $X\ge X^*$.
			As $\E\left[\xi X\right]=\E\left[\xi X^*\right]$,
			we have $X = X^*$ almost surely, and the uniqueness of $X^*$ follows.
			
			\item
			
			For $x_0=g(\lambda^*-)$, as $\lambda^*>\lambda_0$, based on Lemma \ref{lem}, we have $g(\lambda^*-)=\mathbb{E}[\xi \overline{X}_B(\lambda^*\xi)]$. Therefore, $X^*=\overline{X}_B(\lambda^*\xi)$ is an optimal solution. Similar to (1), we know that $X^*$ is the unique one.

			As $\lambda^*$ is a discontinuous point,  we know $g(\lambda^*-)>g(\lambda^*)$, i.e. the set $\{\underline{X}_B(\lambda^*\xi)\not= \overline{X}_B(\lambda^*\xi)\}$ has a positive measure. We show that there in fact exists infinitely many optimal solutions when $x_0<g(\lambda^*-)$. As the probability space is non-atomic, it admits a standard normal distribution $W$.
			For $A\in\mathcal{B}(\mathbb{R})$, define
			$
			X^A=\underline{X}_B(\lambda^*\xi)+\big[\overline{X}_B(\lambda^*\xi)-\underline{X}_B(\lambda^*\xi)\big] \id_{\{W\in A\}}\in\mathcal{X}_B(\lambda^*\xi).
			$
			For simplicity, denote
			$
			X_1=\xi \underline{X}_B(\lambda^*\xi),\  X_2=\xi\big[\overline{X}_B(\lambda^*\xi)-\underline{X}_B(\lambda^*\xi)\big].
			$
			Then
			$$
			\E X_1=g(\lambda^*)<x_0, ~\E X_2=g(\lambda^*-)-g(\lambda^*)(\triangleq \rho_1),
			$$ and
			$$
			\E\left[\xi X^A(T)\right]=\E X_1+\E [X_2 \id_{\{W\in A\}}]=g(\lambda^*)+\E [X_2 \id_{\{W\in A\}}].
			$$
			Define $s(A)\triangleq\E [X_2 \id_{\{W\in A\}}]$. We need to choose $A$ such that $s(A)=x_0-g(\lambda^*)\triangleq \rho_2\in(0,\rho_1)$.
			Choose $k$ large enough such that $\frac{\rho_1-\rho_2}{2^k}<\rho_2$, and denote further
			$
			a_0\triangleq\frac{\rho_1-\rho_2}{2}$, $
			a_n\triangleq\rho_2-\frac{\rho_1-\rho_2}{2^{n+k}},~n\ge1.
			$
			Then for any $ ~i,~j$,
			\begin{equation}\label{order}
				a_1<a_2<a_3<...<\rho_2,\ a_j<a_i+a_0<\frac{\rho_1+\rho_2}{2}.
			\end{equation}
			
			Let $g_1(t)=s\big((-\infty,t)\big)$. Using the monotone convergence theorem, we know $g(-\infty)=0$ and $g(+\infty)=s(\mathbb{R})=\E X_2=\rho_1$, and that $g_1$ is increasing and continuous. Therefore, for any $n$, there exist $\delta_n$, $\epsilon_n$ and $\zeta_n$ such that
			\begin{equation*}
				\delta_n<\epsilon_n<\zeta_n, \ \ g_1(\delta_n)=a_n,\ \ g_1(\epsilon_n)=a_n+a_0,\ \ g_1(\zeta_n)=\frac{\rho_1+\rho_2}{2}=\rho_2+a_0.
			\end{equation*}
			Using (\ref{order}), we know that for any $i$, $j$,
			$
			\delta_1<\delta_2<...<\delta_n,\ \epsilon_i>\delta_j.
			$
			Define $A_n\triangleq(-\infty,\delta_n)\cup[\epsilon_n,\zeta_n)$. We have
			$
			s(A_n)=g_1(\delta_n)+g_1(\zeta_n)-g_1(\epsilon_n)=\rho_2.
			$
			Consequently, $X^{A_n}\in\mathcal{X}_B(\lambda^*\xi)$, $\E\left[\xi X^{A_n}\right]=x_0$. We obtain that $X^{A_n}$ is an optimal solution.
			
			Finally,  for any $i<j$,		
			let $A=(-\infty,\delta_j)$, then
			\begin{equation*}
				\E\left[X_2 \id_{\{W\in A_i\}} \id_{\{W\in A\}} \right] = g_1(\delta_i) = a_i < a_j = g_1(\delta_j) = \E\left[X_2 \id_{\{W\in A_j\}} \id_{\{W\in A\}}\right].
			\end{equation*}
			Thus $X^{A_i}$ and $X^{A_j}$ are different solutions, and we have constructed infinitely many solutions.

			\item[(c1)] The assertion has been already proved in Proposition \ref{thm_feasible}, as $g(0)=\mathbb{E}[\xi\overline{x}_B]$.
			
			\item[(c2)]
			For $g(\lambda_0)<x_0\le\mathbb{E}[\xi \overline{X}_B(\lambda_0\xi)]$, we know $g(\lambda_0)=\mathbb{E}[\xi \underline{X}_B(\lambda_0\xi)]<+\infty$, and again we obtain $\mathbb{P}[A]=\epsilon>0$ with $A=\{\underline{X}_B(\lambda_0\xi)\not= \overline{X}_B(\lambda_0\xi)\}$. Here we also consider two cases:
			\begin{itemize}
				\item If $\mathbb{E}[\xi \overline{X}_B(\lambda_0\xi)]<\infty$, using the same methods in (2), we construct infinitely many optimal solutions when $x_0<\mathbb{E}[\xi \overline{X}_B(\lambda_0\xi)]$ and one (unique) optimal solution when $x_0=\mathbb{E}[\xi \overline{X}_B(\lambda_0\xi)]$.
				\item If $\mathbb{E}[\xi \overline{X}_B(\lambda_0\xi)]=+\infty$, then we use Lemma \ref{lemma1} to construct an optimal solution. As $\mathbb{E}[\xi\underline{X}_B(\lambda_0\xi)]=g(\lambda_0)<x_0<+\infty$, we know  $\mathbb{E}[\xi(\overline{X}_B(\lambda_0\xi)-\underline{X}_B(\lambda_0\xi))]=+\infty$. Then Lemma \ref{lemma1} shows that we can find $A\in\mathcal{F}$ such that
				$
				\mathbb{E}[\xi(\overline{X}_B(\lambda_0\xi)-\underline{X}_B(\lambda_0\xi))\id_A]>x_0-g(\lambda_0).
				$
				Using the same method as in (2), we have $A_1\subset A$ with $
				\mathbb{E}[\xi(\overline{X}_B(\lambda_0\xi)-\underline{X}_B(\lambda_0\xi))\id_{A_1}]=x_0-g(\lambda_0).
				$
				Let
				$
				X^{A_1}=\overline{X}_B(\lambda_0\xi)\id_{A_1}+\underline{X}_B(\lambda_0\xi)\id_{A_1^c}\in\mathcal{X}_B(\lambda_0\xi),
				$
				we have
				$$
				\mathbb{E}[\xi X^{A_1}]=\mathbb{E}[\xi\underline{X}_B(\lambda_0\xi)]+\mathbb{E}[\xi(\overline{X}_B(\lambda_0\xi)-\underline{X}_B(\lambda_0\xi))\id_{A_1}]=g(\lambda_0)+x_0-g(\lambda_0)=x_0.
				$$
				Based on Theorem \ref{general thm}, we have found an optimal solution $X^{A_1}$. Similarly as in (2), one can construct infinitely many optimal solutions.	
			\end{itemize}
			
			For $x_0>\mathbb{E}[\xi \overline{X}_B(\lambda_0\xi)]\triangleq\theta$. We know $\theta\ge g(\lambda_0)>-\infty$. We first prove that there is no finite optimal solution. Suppose $X^*$ is a finite optimal solution. Using Theorem \ref{general thm}, we know  $X^*\in\mathcal{X}_B(\lambda\xi)$ for some $\lambda\ge0$.
			We consider two cases:
			
			\begin{itemize}
				\item If $\lambda\ge\lambda_0$, then, based on Lemma \ref{lem}, we know $X^*\le \overline{X}_B(\lambda_0\xi)$, and hence $x_0=\mathbb{E}[\xi X^*]\le \mathbb{E}[\xi \overline{X}_B(\lambda_0\xi)]$, which leads to a contradiction.
				\item If $\lambda<\lambda_0$, then as $X^*\ge \underline{X}_B(\lambda'\xi)$ for some $0\le\lambda<\lambda'<\lambda_0$, we know $g(\lambda')<+\infty$, which leads to a contradiction.
			\end{itemize}
			
			For \eqref{eq:main}, take a sequence  $\{\lambda_k\}_{k\ge1}\subset(0,\lambda_0)$ with $\lambda_k\uparrow\lambda_0$. For each $k$,
			$$\mathbb{E}\left[\xi \left(\underline{X}_B(\lambda_k\xi)-\overline{X}_B(\lambda_0\xi)\right)\right]=g(\lambda_k)-\theta=+\infty.$$
			Using Lemma \ref{lemma1}, we can find a measurable set $A_k$ such that
			$
			\E\left[\xi\left(\underline{X}_B(\lambda_k\xi)-\overline{X}_B(\lambda_0\xi)\right)\id_{A_k}\right]=x_0-\theta.
			$
			Define $\hat{X}_k=\underline{X}_B(\lambda_k\xi)\id_{A_k}+\overline{X}_B(\lambda_0\xi)\id_{A_k^c}$. Then $\E[\xi\hat{X}_k]=x_0$. Hence $\hat{X}_k>-\infty~\text{a.s.}$. As $\hat{X}_k$ always locates in some $\mathcal{X}_b(y)$, we know  $U(\hat{X}_k,B)>-\infty~\text{a.s.}$. Suppose that $X$ is a random variable satisfying $\E[\xi X]=x_0$ and $U(X,B)>-\infty$. By the definition of $\hat{X}_k$, we have
			\begin{equation*}
				\left\{\begin{aligned}
					&\left(U(X,B)-\lambda_k\xi X\right)\id_{A_k}\le \left(U\(\hat{X}_k,B\)-\lambda_k\xi\hat{X}_k\right)\id_{A_k},\\
					&\left(U(X,B)-\lambda_0\xi X\right)\id_{A_k^c}\le \left(U\(\hat{X}_k,B\)-\lambda_0\xi\hat{X}_k\right)\id_{A_k^c}.
				\end{aligned}\right.
			\end{equation*}
			Taking expectation on both sides and adding the two inequalities, we get
			\begin{equation}\label{eq:ineq1}
				\E\left[U(X,B)\right]-\lambda_0x_0-(\lambda_k-\lambda_0)\E\left[\xi X\id_{A_k}\right]\le\E\left[U\left(\hat{X}_k,B\right)\right]-\lambda_0x_0-(\lambda_k-\lambda_0)\E\left[\xi \hat{X}_k\id_{A_k}\right].
			\end{equation}
			Using Lemma \ref{lem}, we know that $\underline{X}_B(\lambda_k\xi)$ converges to $\overline{X}_B(\lambda_0\xi)$ almost surely. Hence $\underline{X}_B(\lambda_k\xi)$ also converges to $\overline{X}_B(\lambda_0\xi)$ in probability. For $\epsilon>0$, we have
			\begin{equation*}
				\mathbb{P}\left[\left|\hat{X}_k-\overline{X}_B(\lambda_0\xi)\right|>\epsilon\right]\le \mathbb{P}\left[\left|\underline{X}_B(\lambda_1\xi)-\overline{X}_B(\lambda_0\xi)\right|>\epsilon\right].
			\end{equation*}
			As such, $\hat{X}_k$ also converges to $\overline{X}_B(\lambda_0\xi)$ in probability, and we can find one of its subsequence converging to $\overline{X}_B(\lambda_0\xi)$ almost surely, which is still denoted by $\{\hat{X}_k\}_{k\ge1}$ for simplicity.
			As $\E\left[\xi\overline{X}_B(\lambda_0\xi)\right]$ is finite, we know $\E\left[\xi \hat{X}_k\id_{A_k}\right]\ge\E\left[\xi\overline{X}_B(\lambda_0\xi)\id_{A_k}\right]>-\E\left[\xi\overline{X}_B(\lambda_0\xi)^-\right]>-\infty,$
			and
			$
			\E\left[\xi \hat{X}_k\id_{A_k}\right]=x_0-\E\big[\xi \hat{X}_k\id_{A_k^c}\big]=x_0-\E\left[\xi \overline{X}_B(\lambda_0\xi)\id_{A_k^c}\right]\le x_0+\E\left[\xi\overline{X}_B(\lambda_0\xi)^-\right]<+\infty.
			$
			Letting $k\to+\infty$ on both sides of \eqref{eq:ineq1}, we know
			\begin{equation}\label{eq:main_lim}
				\E\[U(X,B)\]\le\liminf_{k\to+\infty}\E\[U\(\hat{X}_k,B\)\].
			\end{equation}
			Hence
			$$\sup_{\E\[\xi X\]\le x_0\atop U(X,B)>-\infty}\E\[U(X,B)\]\le\liminf_{k\to+\infty}\E\[U\(\hat{X}_k,B\)\].$$
			However, as $\E\[\xi\hat{X}_k\]=x_0$ and $U(\hat{X}_k,B)>-\infty$, we have $\E\[U\(\hat{X}_k,B\)\]\le\sup\limits_{\E\[\xi X\]\le x_0\atop U(X,B)>-\infty}\E\[U(X,B)\].$
			As such, we obtain
			$\sup_{\E\[\xi X\]\le x_0\atop U(X,B)>-\infty}\E\[U(X,B)\]=\lim\limits_{k\to+\infty}\E\[U\(\hat{X}_k,B\)\].$
			Moreover, as $U\(\hat{X}_k,B\)\le U\(\overline{X}_B\(\lambda_0\xi\),B\)+\lambda_0\xi\(\hat{X}_k-\overline{X}_B\(\lambda_0\xi\)\)\triangleq Y_k$, 
			{\color{black} using the properties of concavification function, we have
				\begin{equation*}
					\overline{X}_b(y) = \sup \mathcal{X}^{U}_b(y) = \sup \mathcal{X}^{\tilde{U}}_b(y),~~
					\underline{X}_b(y) = \inf \mathcal{X}^{U}_b(y) = \inf \mathcal{X}^{\tilde{U}}_b(y),
				\end{equation*}
				and
				\begin{equation*}
					\tilde{U}\(\overline{X}_b\(y\),b\)=U\(\overline{X}_b\(y\),b\),~~
					\tilde{U}\(\underline{X}_b\(y\),b\)=U\(\underline{X}_b\(y\),b\).
				\end{equation*}
				Therefore, we can write 
			\begin{equation}\label{diff yk u}
				Y_k-U\(\hat{X}_k,B\)=\left[
				\tilde{U}\(\overline{X}_B\(\lambda_0\xi\),B\)-
				\tilde{U}\(\underline{X}_B\(\lambda_k\xi\),B\)-
				\lambda_0\xi\(\overline{X}_B\(\lambda_0\xi\)-
				\underline{X}_B\(\lambda_k\xi\)\)\right]\id_{A_k}.
			\end{equation}
			Noting that $\overline{X}_B\(\lambda_0\xi\)\le
			\underline{X}_B\(\lambda_k\xi\)$, we have
			$$
			\tilde{U}\(\overline{X}_B\(\lambda_0\xi\),B\)-
			\tilde{U}\(\underline{X}_B\(\lambda_k\xi\),B\)\le
			\tilde{U}'_-\(\underline{X}_B\(\lambda_k\xi\),B\)
			\(\overline{X}_B\(\lambda_0\xi\)-
			\underline{X}_B\(\lambda_k\xi\)\).
			$$
			
			Using \eqref{concave arg}, we know $\tilde{U}'_-\(\underline{X}_B\(\lambda_k\xi\),B\)\ge\lambda_k\xi$. Hence, 
			$$
			\tilde{U}\(\overline{X}_B\(\lambda_0\xi\),B\)-
			\tilde{U}\(\underline{X}_B\(\lambda_k\xi\),B\)\le
			\lambda_k\xi
			\(\overline{X}_B\(\lambda_0\xi\)-
			\underline{X}_B\(\lambda_k\xi\)\).
			$$
			Combining the above inequality and \eqref{diff yk u} yields
			$$
			0\le Y_k-U\(\hat{X}_k,B\)\le (\lambda_0-\lambda_k)\xi
			\(\underline{X}_B\(\lambda_k\xi\)-\overline{X}_B\(\lambda_0\xi\)\)\id_{A_k}.
			$$
			As
			$
			\E\left[\xi\left(\underline{X}_B(\lambda_k\xi)-\overline{X}_B(\lambda_0\xi)\right)\id_{A_k}\right]=x_0-\theta,
			$
			we know
			$$
			\E\left[\left|Y_k-U\(\hat{X}_k,B\)\right|\right]
			\le(\lambda_0-\lambda_k)(x_0-\theta).
			$$
			This means
			$\lim\limits_{k\to+\infty}\E\[U\(\hat{X}_k,B\)\]=\lim\limits_{k\to+\infty}\E\[Y_k\]
			=\E\[U\(\overline{X}_B\(\lambda_0\xi\),B\)\]+\lambda_0(x_0-\theta)$.
			}
		\end{enumerate}
	\end{enumerate}	
\end{proof}	
{\color{black}
\begin{remark}
	Indeed, Theorem \ref{main} also holds with Assumption \ref{ass_standing} replaced by Assumptions \ref{ass_G} and \ref{asmp:Hn}, but it needs a more loaded expression with trivial details because we need to deal with the case where $\underline{X}_B$ or $\overline{X}_B$ being infinite. Hence, we propose some interesting examples instead.
\end{remark}

\begin{example}[Demonstrating \eqref{eq:main}]\label{eg:inequality}%
	Suppose that $\xi$ is uniformly distributed on $\[1,2\]$, and we consider a univariate utility
	$
	u(x)=(x+1)\id_{(1,+\infty)}(x)+2x\id_{[0,1]}(x). 
	$
	Then we can compute
	$$
	\underline{X}(y)=\left\{
	\begin{aligned}
		&0,&y>2,\\
		&1,&1\le y< 2,\\
		&+\infty,&0\le y<1.
	\end{aligned}\right.~\text{ and }~
	g(\lambda)=\left\{
	\begin{aligned}
		&0,&\lambda>2,\\
		&\frac{4-\lambda^2}{2\lambda^2},&1\le \lambda\le 2,\\
		&+\infty,&0\le\lambda<1.
	\end{aligned}\right..
	$$
	Hence $\lambda_0=1$, $\overline{X}(\lambda_0\xi)=1~\text{a.s.}$,  $\theta\triangleq\E\[\xi\overline{X}(\lambda_0\xi)\]=\frac{3}{2}$ and $\E\[u\(\overline{X}\(\lambda_0\xi\)\)\]=2$. For $x_0>\theta$, using \eqref{eq:main} or \eqref{eq:univarite_ineq},
	\begin{equation}\label{eg:equity}
		\sup\limits_{X: \mathbb{E}[\xi X]\le x_0\atop u(X)>-\infty}\mathbb{E}[u(X)]=\E\[u\(\overline{X}(\lambda_0\xi)\)\]+\lambda_0(x_0-\theta)=x_0+\frac{1}{2}.
	\end{equation}
	For $\epsilon>0$, define
	$
	X_{\epsilon}=\id_{\(1+\epsilon,2\]}(\xi)+\(1+\frac{2x_0-3}{\epsilon^2+2\epsilon}\)\id_{\[1,1+\epsilon\]}(\xi)\ge0.
	$
	Then
	$
	\E\[\xi X_{\epsilon}\]=
	x_0,
	$
	and
	$
	\E\[u(X_{\epsilon})\]=2(1-\epsilon)+(2+\frac{2x_0-3}{\epsilon^2+2\epsilon})\epsilon=2+\frac{2x_0-3}{2+\epsilon}.
	$
	Letting $\epsilon\to0$, we know that \eqref{eg:equity} takes ``=''.
\end{example}
In this example, if $\essinf~\xi=0$ , then $g(\lambda)=+\infty$ for all $\lambda\ge0$, and the problem becomes infinite. 
}
\section{Proofs in Sections \ref{section_condition}-\ref{section_ex2}}

\begin{proof}[Proof of Proposition \ref{prop:finite}]
	It is similar to \eqref{eq:utineq}-\eqref{eq:ut_estimate2} in the proof of Proposition \ref{suff cond}.
\end{proof}
\begin{proof}[Proof of Theorem \ref{prop:suff2}.]
	The proof of (i) is the same as that of Theorem \ref{main}(1).
	For (ii), if $x_0\in\(g(+\infty),g(\lambda_0)\)$(may be empty) or $x_0=g(\lambda_0)$, using Theorem \ref{main} we find an optimal solution $X^*\in\mathcal{X}_B(\lambda\xi)$ for some $\lambda\ge\lambda_0$. Hence $X^*\ge\underline{X}_B\(\lambda\xi\)$ and $\E\[U(X^*,B)\]\ge J(\lambda)=+\infty$.
	If $x_0>g(\lambda_0)$, then as $U(x,b)$ is nondecreasing in $x$, the optimal value of Problem \eqref{problem1} is nondecreasing in the initial value.  Problem \eqref{problem1} with an initial value $x_0>g(\lambda_0)$ is also infinite.
	
	For (iii), as $\lambda_0<+\infty$ and $\lambda_1<+\infty$, there exists $\lambda\in(0,+\infty)$ such that $g(\lambda)<+\infty$ and $J(\lambda)<+\infty$. Then $g(\lambda)$ and $J(\lambda)$ are all finite. Using Proposition \ref{prop:finite}, we know that Problem \eqref{problem1} is finite.
	If $\lambda_1>\lambda_0$, then we define $\hat{x}=g\(\frac{\lambda_0+\lambda_1}{2}\)$. Using Theorem \ref{main}, we know that Problem \eqref{problem1} with initial value $\hat{x}$ admits an optimal solution $X^*=\underline{X}_B\(\frac{\lambda_0+\lambda_1}{2}\xi\)$, and the optimal value equals $J\(\frac{\lambda_0+\lambda_1}{2}\)=+\infty$, which contradicts to Proposition \ref{prop:finite}. Thus, $\lambda_1\le\lambda_0$.
	If $\lambda_1<\lambda_0$, then we take $\lambda_1 < \lambda < \lambda_0 < \lambda'$ and write
	$$
	U\left(\underline{X}_B\left(\lambda\xi\right)\right) - \lambda \xi \underline{X}_B\left(\lambda\xi\right)\ge
	U\left(\underline{X}_B\left(\lambda'\xi\right)\right) - \lambda \xi \underline{X}_B\left(\lambda'\xi\right).
	$$
	Taking expectation we obtain
	$J(\lambda) - \lambda g(\lambda)\ge J(\lambda') - \lambda g(\lambda')$, which is a contradiction as $J(\lambda), J(\lambda')$ and $g(\lambda')$ are all finite while $g(\lambda) = +\infty$.
\end{proof}

\begin{proof}[Proof of Theorem \ref{thm_known}]
	It is a direct corollary of Theorems \ref{thm_concave}-\ref{general thm}: one can simply regard $B$ as a constant in Theorems \ref{thm_concave}-\ref{general thm} (removing all the $``B"$ appeared in the proof). 
\end{proof}
\begin{proof}[Proof of Theorem \ref{thm ex}.]
	The expression of $\underline{X}_b^{\mu}(y)$ can be derived after some routine but trivial computation. We proceed to prove that Problem \eqref{remod prob} admits a unique finite optimal solution.
	To apply Theorem \ref{main}, we need the condition that Problem \eqref{remod prob} is finite. Indeed, if we take $u_1(b)=\mu$, $u_2(b)=k$, $K(b)=0$, $\delta=p$, $\theta(b)=b_1$, $\gamma(b)=\mu \id_{\{b_1\ge b_2\}}$, then Proposition \ref{suff cond} indicates that Problem \eqref{remod prob} is finite, and the Case 1 in Section \ref{section_finite} holds. Based on Theorem \ref{main}, finite optimal solution exists for every $x_0>g(+\infty)=0$ (noting that $\underline{x}_b=0$).

	From the expression of $\underline{X}^{\mu}_b(y)$, we know 
	\begin{equation}\label{discont set}
		D^{\mu}_b:=\{y:\underline{X}^{\mu}_b(y)\not=\overline{X}^{\mu}_b(y)\}=\left\{
		\begin{array}{lc}
			\{y_1(b),y_2^\mu(b)\},&  y_1(b)\le y_2^\mu(b)\\
			\{y_3^\mu(b)\},& y_1(b)>  y_2^\mu(b)
		\end{array}\right..
	\end{equation}
	As such, in Plans I-III, as $B$ only takes value in a finite set, and $\xi$ is continuously distributed, we have for every $\lambda\ge0$:
	$	\mathbb{P}[\underline{X}^{\mu}_B(\lambda\xi)\not=\overline{X}^{\mu}_B(\lambda\xi)]\le \mathbb{P}[\lambda\xi=y_1(B)]+
	\mathbb{P}[\lambda\xi=y_2^{\mu}(B)]+
	\mathbb{P}[\lambda\xi=y_3^{\mu}(B)]=0.
	$
	Therefore, using Theorem \ref{main}(2), we know that $g(\lambda)=\mathbb{E}[\xi\underline{X}_B(\lambda\xi)]$ is continuous, and Problem \eqref{remod prob} admits a unique optimal solution $X^{\mu}=\underline{X}^{\mu}_B(\lambda(\mu)\xi)$.
\end{proof}




%


%
%
%

\end{document}